\numberwithin{equation}{section}
\newcommand{\dashint}{\fint}
\newcommand{\R}{\mathbb R}
\newcommand{\N}{\mathbb N}
\DeclareMathOperator{\tr}{tr}
\newcommand{\ntimes}[1]{\times_{#1}}
\newcommand{\nMat}[2]{\left\llbracket#1\right\rrbracket_{\ntimes{#2}}}
\renewcommand{\div}{\operatorname{div}}
\newcommand{\Div}{\operatorname{Div}}
\newcommand{\Cof}{\operatorname{Cof}}
\newcommand{\Anti}{\operatorname{Anti}}
\newcommand{\dif}{\operatorname{d}\!}
\newcommand{\lebe}{\operatorname{L}}
\newcommand{\sobo}{\operatorname{W}}
\newcommand{\imag}{\operatorname{i}}
\newcommand{\hold}{\operatorname{C}}
\newcommand{\ball}{\operatorname{B}}
\newcommand{\A}{\mathbb{A}}
\newcommand{\D}{\operatorname{D}\!}
\newcommand{\spt}{\operatorname{spt}}
\newcommand{\bmo}{\operatorname{BMO}}
\newcommand{\curl}{\operatorname{curl}}
\newcommand{\sym}{\operatorname{sym}}
\renewcommand{\skew}{\operatorname{skew}}
\newcommand{\dev}{\operatorname{dev}}
\newcommand{\norm}[1]{\left\lVert#1\right\rVert}
\renewcommand{\Re}{\mathrm{Re}}
\renewcommand{\Im}{\mathrm{Im}}
\newcommand{\Curl}{\operatorname{Curl}}
\newcommand{\Atimes}[2]{\,#1\otimes_{\mathbb{A}}#2}
\newcommand{\eI}[1]{\mathbf{e}_{#1}}
\newcommand{\Q}{\mathfrak{G}}
\newcommand{\f}{\mathfrak{f}}
\newcounter{alphasect}
\def\alphainsection{0}
\let\oldsection=\section
\def\section{%
  \ifnum\alphainsection=1%
    \addtocounter{alphasect}{1}
  \fi%
\oldsection}%
\renewcommand\thesection{%
  \ifnum\alphainsection=1%
    \Alph{alphasect}%
  \else%
    \arabic{section}%
  \fi%
}%
\newenvironment{alphasection}{%
  \ifnum\alphainsection=1%
    \errhelp={Let other blocks end at the beginning of the next block.}
    \errmessage{Nested Alpha section not allowed}
  \fi%
  \setcounter{alphasect}{0}
  \def\alphainsection{1}
}{%
  \setcounter{alphasect}{0}
  \def\alphainsection{0}
}%
\newtheorem{theorem}{Theorem}[section]
\newtheorem{lemma}[theorem]{Lemma}
\newtheorem{proposition}[theorem]{Proposition}
\theoremstyle{remark}
\newtheorem{remark}[theorem]{Remark}
\newtheorem{example}[theorem]{Example}
\newtheorem{OQ}[theorem]{Open Question}
\definecolor{fg}{RGB}{34,139,34}  
\begin{document}
\begin{tikzpicture}[remember picture, overlay]
 \node [xshift=-1cm,yshift=15cm,rotate=-90] at (current page.south east)
 {The final publication appeared in Calculus of Variations and Partial Differential Equations (2023), doi: \href{https://doi.org/10.1007/s00526-023-02522-6}{10.1007/s00526-023-02522-6   
}.
 };
\end{tikzpicture}

\numberwithin{equation}{section}

\title[KMS-inequalities in all dimensions]{Optimal incompatible Korn-Maxwell-Sobolev \\ inequalities in all dimensions}
\author[F.~Gmeineder]{Franz Gmeineder}
\address[F. Gmeineder]{University of Konstanz, Department of Mathematics and Statistics, Universit\"{a}tsstra\ss e 10, 78457 Konstanz, Germany.}
\author[P.~Lewintan]{Peter Lewintan} 
\author[P.~Neff]{Patrizio Neff}
\address[P. Lewintan \& P. Neff]{Faculty of Mathematics, University of Duisburg-Essen, Thea-Leymann-Stra\ss e 9,
45127 Essen, Germany.}
\date\today
\keywords{Korn's inequality, Sobolev inequalities, incompatible tensor fields, limiting $\lebe^{1}$-estimates}
\subjclass[2020]{35A23, 26D10, 35Q74/35Q75, 46E35}

\maketitle

\begin{abstract}
We characterise all linear maps $\mathscr{A}\colon\R^{n\times n}\to\R^{n\times n}$ such that, for $1\leq p<n$, 
\begin{align*}
\norm{P}_{\lebe^{p^{*}}(\R^{n})}\leq c\,\Big(\norm{\mathscr{A}[P]}_{\lebe^{p^{*}}(\R^{n})}+\norm{\Curl P}_{\lebe^{p}(\R^{n})} \Big)
\end{align*}
holds for all compactly supported $P\in\hold_{c}^{\infty}(\R^{n};\R^{n\times n})$, where $\Curl P$ displays the matrix curl. Being applicable to incompatible, that is, non-gradient matrix fields as well, such inequalities generalise the usual Korn-type inequalities used e.g. in linear elasticity. Different from previous contributions, the results gathered in this paper are applicable to all dimensions and optimal. This particularly necessitates the distinction of different {combinations} between the ellipticities of $\mathscr{A}$, the integrability $p$ and the underlying space dimensions $n$, especially requiring a finer analysis in the two-dimensional situation. 
\end{abstract}
\section{Introduction}
One of the most fundamental tools in (linear) elasticity or fluid mechanics are \emph{Korn-type inequalities}. Such inequalities are pivotal for coercive estimates, leading to well-posedness and regularity results in spaces of weakly differentiable functions; see  \cite{Ciarlet, CFI,ContiGmeineder,Friedrichs,FuchsSeregin,Gm21,Gobert,Horgan,Korn,Korn1,Malek,PayneWeinberger} for an incomplete list. In their most basic form they assert that for each $n\geq 2$ and each $1<{q}<\infty$ there exists a constant $c=c({q},n)>0$ such that 
\begin{align}\label{eq:Korn1}
\norm{\D u}_{\lebe^{{q}}(\R^{n})}\leq c\norm{\varepsilon(u)}_{\lebe^{{q}}(\R^{n})}=c\norm{\D u+\D u^{\top}}_{\lebe^{{q}}(\R^{n})}
\end{align}
holds for all $u\in\hold_{c}^{\infty}(\R^{n};\R^{n})$. Here, $\varepsilon(u)=\sym \D u$ is the symmetric part of the gradient. Within linearised elasticity, where $\varepsilon(u)$ takes the role of the infinitesimal strain tensor for some displacement $u\colon\Omega\to\R^{n}$, variants of~inequality \eqref{eq:Korn1} imply  the existence of minimisers for elastic energies
\begin{align}\label{eq:mainfunctional}
u\mapsto \int_{\Omega}W(\sym \D u)\dif x
\end{align}
in certain subsets of Sobolev spaces $\sobo^{1,{q}}(\Omega;\R^{n})$ provided the elastic energy density $W$ satisfies suitable growth and semiconvexity assumptions, see e.g. \textsc{Fonseca \& M\"{u}ller} \cite{FonsecaMueller}. Variants of~\eqref{eq:Korn1} also prove instrumental in the study of (in)compressible fluid flows  \cite{Breit,Feireisl,Malek} or in the momentum constraint equations from general relativity and trace-free infinitesimal strain measures \cite{BauerNeffPaulyStarke,Dain,FuchsSchirra,Lewintan3,Lewintan4,Reshetnyak,Schirra}.

 Originally,~\eqref{eq:Korn1} was derived by \textsc{Korn} \cite{Korn} in the $\lebe^{2}$-setting and later on generalised to all $1<{q}<\infty$. Inequality~\eqref{eq:Korn1} is non-trivial because it strongly relies on gradients not being arbitrary matrix fields; note that there is no constant $c>0$ such that 
\begin{align}\label{eq:Korn2}
\norm{P}_{\lebe^{{q}}(\R^{n})}\leq c\norm{\sym P}_{\lebe^{{q}}(\R^{n})},\qquad P\in\hold_{c}^{\infty}(\R^{n};\R^{n\times n})
\end{align}
holds. As such, \eqref{eq:Korn2} fails since general matrix fields need not be $\Curl$-free. It is therefore natural to consider variants of \eqref{eq:Korn2} that quantify the lack of $\curl$-freeness or \emph{incompatibility}. Such inequalities prove crucial in view of infinitesimal (strain-)gradient plasticity, where functionals typically involve integrands $\D u\mapsto W(\sym e)+|\sym P|^2+V(\Curl P)$ based on the additive decomposition of the displacement gradient $\D u$ into incompatible elastic and irreversible plastic parts: $\D u=e+P$ and $\sym e$ representing a measure of (infinitesimal) elastic strain while $\sym P$ quantifies the plastic strain; see \textsc{Garroni} et al. \cite{GLP}, \textsc{M\"{u}ller} et al. \cite{Lewintan2,MulScaZep} and \textsc{Neff} et al. \cite{EbobisseHacklNeff,Muench,NeffGradient,NeffKnees,NeffPlastic,NesenenkoNeff} for related models. Another field of application are generalised continuum models, e.g. the relaxed micromorphic model, cf.~  \cite{NeffRMM,Neffunifying,SkyNeff}. A key tool in the treatment of such problems are the incompatible \emph{Korn-Maxwell-Sobolev inequalities} which, in the context of~\eqref{eq:Korn2}, read as
\begin{align}\label{eq:Korn3}
\norm{P}_{\lebe^{q}(\R^{n})}\leq c\big(\norm{\sym P}_{\lebe^{q}(\R^{n})}+\norm{\Curl P}_{\lebe^{p}(\R^{n})}\big),\qquad P\in\hold_{c}^{\infty}(\R^{n};\R^{n\times n}), 
\end{align}
where the reader is referred to Section~\ref{sec:notation} for the definition of the $n$-dimensional matrix curl operator. For brevity, we simply speak of \emph{KMS-inequalities}. Scaling directly determines $q$ in terms of $p$ (or vice versa), e.g. leading to the Sobolev conjugate $q=\frac{n\,p}{n-p}$ if $1\leq p<n$. Variants of~\eqref{eq:Korn3} on bounded domains have been studied in several contributions \cite{BauerNeffPaulyStarke, ContiGarroni,GLP,GmSp,Lewintan2,Lewintan3,Lewintan4,Lewintan5,Lewintan6,NeffPaulyWitsch}.
Inequality~\eqref{eq:Korn3} asserts that the symmetric part of $P$ and the curl of $P$ are strong enough to control the entire matrix field $P$. However, it does not clarify the decisive feature of the symmetric part to make inequalities as~\eqref{eq:Korn3} work and thus has left the question of the sharp underlying mechanisms for~\eqref{eq:Korn3} open.

In this paper, we aim to close this gap. Different from previous contributions, where such inequalities were studied for specific {combinations} $(p,n)$ or particular choices of matrix parts $\mathscr{A}[P]$ such as $\sym P$ or $\dev\sym P$,\footnote{$\dev X\coloneqq X-\frac{\tr X}{n}\cdot\text{\usefont{U}{bbold}{m}{n}1}_{n}$ denotes the deviatoric (trace-free) part of an $n\times n$ matrix $X$.} the purpose of the present paper is to classify those parts $\mathscr{A}[P]$ of matrix fields $P$ such that~\eqref{eq:Korn3} holds with $\sym P$ being replaced by $\mathscr{A}[P]$ for all possible choices of $1\leq p\leq\infty$ depending on the underlying space dimension $n$. We now proceed to give the detailled results and their context each.
\section{Main results}
\subsection{All dimensions estimates}
To approach~\eqref{eq:Korn3} in light of the failure of~\eqref{eq:Korn2}, one may employ a Helmholtz decomposition to represent $P$ as the sum of a gradient (hence curl-free part) and a curl (hence divergence-free part). Under suitable assumptions on $\mathscr{A}$, the gradient part can be treated by a general version of Korn-type inequalities implied by the usual \textsc{Calder\'{o}n-Zygmund} theory~\cite{CZ}. On the other hand, the div-free part is dealt with by the fractional integration theorem (cf.~Lemma~\ref{lem:FIT} below) in the superlinear growth regime $p>1$. If $p=1$, then the particular structure of the div-free term in $n\geq 3$ dimensions allows to reduce to the  \textsc{Bourgain-Brezis} theory \cite{BoBre}. As a starting point, we thus first record the solution of the classification problem {for combinations $(p,n)\neq(1,2)$}:
\begin{theorem}[KMS-inequalities {for $(p,n)\neq(1,2)$}]\label{thm:main1}
Let {$n=2$ and $1<p<2$ or} $n\geq 3$ and $1\leq p<n$. Given a linear map $\mathscr{A}\colon\R^{{m}\times n}\to\R^{{N}}${, with $m, N\in\N$,} the following are equivalent: 
\begin{enumerate}
\item There exists a constant $c=c(p,n,\mathscr{A})>0$ such that the inequality
\begin{align}\label{eq:columbo}
\norm{P}_{\lebe^{p^{*}}(\R^{n})}\leq c\,\Big(\norm{\mathscr{A}[P]}_{\lebe^{p^{*}}(\R^{n})}+\norm{\Curl P}_{\lebe^{p}(\R^{n})}\Big)
\end{align}
holds for all $P\in\hold_{c}^{\infty}(\R^{n};\R^{{m}\times n})$. 
\item $\mathscr{A}$ induces an \emph{elliptic differential operator}, meaning that $\mathbb{A}u\coloneqq \mathscr{A}[\D u]$ is an elliptic differential operator; see Section~\ref{sec:notdiffops} for this terminology. 
\end{enumerate}
\end{theorem}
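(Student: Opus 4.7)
The plan is to establish (b)$\Rightarrow$(a) via a Helmholtz decomposition of $P$, separating it into a compatible (gradient) part handled by Calder\'on--Zygmund theory and an incompatible (divergence-free) part handled by Riesz-potential and Bourgain--Brezis estimates. The reverse implication (a)$\Rightarrow$(b) is obtained by testing the inequality on pure gradients and applying a standard plane-wave scaling argument.

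For (b)$\Rightarrow$(a), given $P\in\hold_c^{\infty}(\R^n;\R^{n\times n})$, I would write $P=\D u+Q$ with $\Div Q=0$, so that $\Curl(\D u)=0$ and $\Curl Q=\Curl P$. Ellipticity of $\mathbb{A}$ means precisely that the symbol $v\mapsto\mathcal{A}[v\otimes\xi]$ is injective for every $\xi\neq 0$, and standard Calder\'on--Zygmund/Mihlin multiplier theory then yields the Korn-type bound
\[
\|\D u\|_{\lebe^{p^{*}}(\R^n)}\leq c\,\|\mathbb{A}u\|_{\lebe^{p^{*}}(\R^n)}=c\,\|\mathcal{A}[\D u]\|_{\lebe^{p^{*}}(\R^n)},
\]
which is available because $p^{*}\in(1,\infty)$ for every $1\leq p<n$. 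For the divergence-free piece, the identity $-\Delta Q=\Curl\Curl P$ (obtained row-wise from $\Div Q=0$ in the appropriate $n$-dimensional formulation) expresses $Q$ as a bounded combination of Riesz transforms applied to the Riesz potential of order one of $\Curl P$. In the superlinear regime $1<p<n$, the Hardy--Littlewood--Sobolev theorem (Lemma~\ref{lem:FIT}) then supplies $\|Q\|_{\lebe^{p^{*}}}\leq c\,\|\Curl P\|_{\lebe^{p}}$. Since $\mathcal{A}[\D u]=\mathcal{A}[P]-\mathcal{A}[Q]$ and $\mathcal{A}$ is bounded on $\lebe^{p^{*}}$, the $Q$-contribution can be absorbed into the right-hand side, establishing~\eqref{eq:columbo}.

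The main obstacle is the endpoint $p=1$, where Hardy--Littlewood--Sobolev at the critical exponent fails. This is precisely where the hypothesis $n\geq 3$ enters in an essential way: in that range $\Curl$ is a \emph{canceling} operator in the sense of Bourgain--Brezis and the subsequent Van Schaftingen framework, producing the critical estimate $\|Q\|_{\lebe^{n/(n-1)}}\leq c\,\|\Curl P\|_{\lebe^{1}}$ for divergence-free $Q$, which is exactly what is needed to close the argument at $p=1$. The two-dimensional situation falls outside the scope of these cancellation results because the algebraic structure of $\Curl$ collapses when $n=2$, which explains why the present theorem is formulated under $n\geq 3$ and why a separate, finer analysis is announced for $n=2$.

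For (a)$\Rightarrow$(b), testing~\eqref{eq:columbo} on $P=\D u$ with $u\in\hold_{c}^{\infty}(\R^n;\R^n)$ reduces it to the Korn-type inequality $\|\D u\|_{\lebe^{p^{*}}}\leq c\,\|\mathbb{A}u\|_{\lebe^{p^{*}}}$. If $\mathbb{A}$ were not elliptic, there would exist $\xi\neq 0$ and $v\neq 0$ with $\mathcal{A}[v\otimes\xi]=0$. Fixing $\varphi\in\hold_{c}^{\infty}(\R^n)$ and setting $u_{\lambda}(x)=\varphi(x)\cos(\lambda\,\xi\cdot x)\,v$, one computes $\|\D u_{\lambda}\|_{\lebe^{p^{*}}}\sim\lambda$, while the leading high-frequency term of $\mathbb{A}u_{\lambda}$ vanishes thanks to $\mathcal{A}[v\otimes\xi]=0$, so that $\|\mathbb{A}u_{\lambda}\|_{\lebe^{p^{*}}}$ remains uniformly bounded in $\lambda$. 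Letting $\lambda\to\infty$ contradicts the inequality, forcing $\mathcal{A}[v\otimes\xi]\neq 0$ whenever $v,\xi\neq 0$, that is, ellipticity of $\mathbb{A}$.
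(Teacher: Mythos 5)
Your proposal is correct and follows essentially the same route as the paper: a Helmholtz decomposition $P = P_{\Curl} + P_{\Div}$, a Calder\'on--Zygmund/Korn bound for the gradient part (Lemma~\ref{lem:CZK}), the fractional integration theorem (Lemma~\ref{lem:FIT}) for the divergence-free part when $1<p<n$, and the Bourgain--Brezis estimate (Lemma~\ref{lem:BoBre}~\ref{item:BoBre2}) at $p=1$ in dimension $n\geq 3$; for the converse the paper tests on gradients and invokes Lemma~\ref{lem:CZK} directly, whereas you unfold that step into an explicit plane-wave oscillation argument, which is the standard proof of that lemma's necessity direction and hence just a more self-contained rendering of the same idea.
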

For the particular choice $\mathscr{A}[P]=\sym P$, this gives us a global variant of \cite[Thm.~2]{ContiGarroni} by \textsc{Conti \& Garroni}. Specifically, Theorem~\ref{thm:main1} is established by generalising the approach of \textsc{Spector} and the first author \cite{GmSp} from $n=3$ to $n\geq 3$ dimensions, and the quick proof is displayed in Section~\ref{sec:sharpAllD}. As a key point, though, we emphasize that ellipticity of $\A$ suffices for~\eqref{eq:columbo} to hold.

As mentioned above, if $n\geq 3$, the borderline estimate for $p=1$ is a consequence of a Helmholtz decomposition and the \textsc{Bourgain-Brezis} estimate 
\begin{align}\label{eq:BB}
\norm{f}_{\lebe^{\frac{n}{n-1}}(\R^{n})}\leq c\norm{\curl f}_{\lebe^{1}(\R^{n})}\qquad\text{for all}\;f\in\hold_{c,\div}^{\infty}(\R^{n};\R^{n}). 
\end{align}
Even if $n=3$, interchanging $\div$ and $\curl$ in~\eqref{eq:BB} is not allowed here, as can be seen by considering regularisations of the gradients of the fundamental solution $\Phi(\cdot)=\frac{1}{3\omega_{3}}|\cdot|^{-1}$ of the Laplacian (and $\rho_{\varepsilon}(\cdot)=\frac{1}{\varepsilon^{3}}\rho(\frac{\cdot}{\varepsilon})$ for a standard mollifier $\rho$): Let $(\varepsilon_{i}),(r_{i})\subset(0,\infty)$ satisfy $\varepsilon_{i}\searrow 0$ and $r_{i}\nearrow\infty$ and choose $\varphi_{r_{i}}\in\hold_{c}^{\infty}(\R^{n})$ with  $\mathbbm{1}_{\ball_{r_{i}}(0)}\leq \varphi_{r_{i}}\leq \mathbbm{1}_{\ball_{2r_{i}}(0)}$ with $|\nabla^{l}\varphi_{r_{i}}|\leq 4/r_{i}^{l}$ for $l\in\{1,2\}$. Putting  
\begin{align}
f_{i}\coloneqq \nabla g_{i}\coloneqq \nabla\Big(\rho_{\varepsilon_{i}}*\Big(\varphi_{r_{i}}\frac{1}{|\cdot|}\Big)\Big),
\end{align}
we then have $\sup_{i\in\N}\norm{\Delta g_{i}}_{\lebe^{1}(\R^{3})}<\infty$, and  validity of the corresponding modified inequality would imply the contradictory estimate $\sup_{i\in\N}\norm{f_{i}}_{\lebe^{\frac{3}{2}}(\R^{3})}=\sup_{i\in\N}\norm{\nabla g_{i}}_{\lebe^{\frac{3}{2}}(\R^{3})}<\infty$. 

However, as can be seen from Example \ref{ex:BBfail} below, inequality~\eqref{eq:BB} fails to extend to $n=2$ dimensions. 
Still, \textsc{Garroni} et al. \cite[\S 5]{GLP} proved validity of a variant of inequality~\eqref{eq:Korn3} in $n=2$ dimensions for the particular choice of the symmetric part of a matrix, and so one might wonder whether Theorem~\ref{thm:main1} remains valid {for the remaining case $(p,n)=(1,2)$} as well. This, however, is \emph{not the case}, as can be seen from 
\begin{example}\label{ex:BBfail}
Consider the trace-free symmetric part of a matrix
\begin{align*}
\dev\sym P=\frac{1}{2}\begin{pmatrix} P_{11}-P_{22} & P_{12}+P_{21} \\ P_{12}+P_{21} & P_{22}-P_{11}\end{pmatrix}\qquad\text{for}\;P=\begin{pmatrix}P_{11} & P_{12} \\ P_{21} & P_{22} \end{pmatrix},
\end{align*}
where $\dev\sym$ induces the usual trace-free symmetric gradient $\varepsilon^{D}(u)=\dev\sym \D u$ which is \emph{elliptic} (cf. Section~\ref{sec:notdiffops}).  Let us now consider for $f\in\hold_{c}^{\infty}(\R^{2})$ the matrix field
\begin{align*}
P_{f}=\begin{pmatrix} \partial_{1}f & \partial_{2}f \\ -\partial_{2}f & \partial_{1}f \end{pmatrix}.
\end{align*}
Then we have 
\begin{align*}
\dev\sym P_{f}=\begin{pmatrix} 0 & 0 \\ 0 & 0 \end{pmatrix}\;\;\;\text{and}\;\;\;\Curl P_{f}=\begin{pmatrix} 0 \\ \Delta f \end{pmatrix},
\end{align*}
If Theorem~\ref{thm:main1} were correct {for the combination $(p,n)=(1,2)$}, the ellipticity of $\varepsilon^{D}$ would  give us 
\begin{align*}
\norm{\nabla f}_{\lebe^{2}(\R^{2})} & = c \norm{P_{f}}_{\lebe^{2}(\R^{2})} \\
&  \stackrel{!}{\leq} c\,\Big(\norm{\dev\sym P_f}_{\lebe^{2}(\R^{2})}+\norm{\Curl P_f}_{\lebe^{1}(\R^{2})} \Big) = c\norm{\Delta f}_{\lebe^{1}(\R^{2})}, 
\end{align*}
and this inequality is again easily seen to be false by taking regularisations and smooth cut-offs of the fundamental solution  of the Laplacian, $\Phi(x)=\frac{1}{2\pi}\log(|x|)$. 
\end{example}

In conclusion, Theorem~\ref{thm:main1} does not extend  to {the case $(p,n)=(1,2)$}. As the strength of $\Curl$ decreases when passing from $n\geq 3$ to $n=2$, one expects that validity of inequalities
\begin{align}\label{eq:KMS2}
\norm{P}_{\lebe^{{1}^{*}}(\R^{2})}\leq c\,\Big(\norm{\mathscr{A}[P]}_{\lebe^{{1}^{*}}(\R^{2})} + \norm{\Curl P}_{\lebe^{{1}}(\R^{2})}\Big),\qquad P\in\hold_{c}^{\infty}(\R^{2};\R^{2\times 2}),
\end{align}
in general requires stronger conditions on $\mathscr{A}$. In this regard, our second main result asserts that the critical {case} $(p,n)=(1,2)$ necessitates very strong algebraic conditions on the matrix parts $\mathscr{A}$ indeed: 
\begin{theorem}[KMS-inequalities in {the case $(p,n)=(1,2)$}]\label{thm:main2}
Let $n=2$ and $p=1$, {then the following are equivalent: 
\begin{enumerate}
\item\label{item:KMScriticalMain1} The critical Korn-Maxwell-Sobolev estimate \eqref{eq:KMS2} holds. 
\item\label{item:KMScriticalMain2} The part map $\mathscr{A}$ induces a \emph{$\mathbb{C}$-elliptic} differential operator. 
\item\label{item:KMScriticalMain3} The part map $\mathscr{A}$ induces an \emph{elliptic and cancelling}  differential operator.
\item\label{item:KMScriticalMain4} The part map $\mathscr{A}$ induces an elliptic differential operator and we have $\dim(\mathscr{A}[\R^{2\times 2}])\in \{3,4\}$. 
\end{enumerate}
Here, we understand by \emph{inducing a differential operator} $\mathbb{A}$ that the associated differential operator $\mathbb{A}u=\mathscr{A}[\D u]$, $u\in\hold_{c}^{\infty}(\R^{2};\R^{2})$, satisfies the corresponding properties. For the precise meaning of $\mathbb{C}$-ellipticity and ellipticity and cancellation, the reader is referred to Section \ref{sec:notdiffops}. 
}
\end{theorem}

Properties \ref{item:KMScriticalMain2} and \ref{item:KMScriticalMain3} express in which sense mere ellipticity has to be strengthened in order to yield the criticial Korn-Maxwell-Sobolev inequality \eqref{eq:KMS2}. Working from Example \ref{ex:BBfail}, condition \ref{item:KMScriticalMain3} is natural from the perspective of limiting $\lebe^{1}$-estimates on the entire space (cf. \textsc{Van Schaftingen} \cite{Va11}). It is then due to the specific dimensional choice $n=2$ that $\mathbb{C}$-ellipticity, usually playing a crucial role in boundary estimates, coincides with ellipticity and cancellation; see \textsc{Rai\c{t}\u{a}} and the first author \cite{GmRa1} and Lemma \ref{lem:helpfulDiffOp} below for more detail. In establishing necessity and sufficiency of these conditions to yield \ref{item:KMScriticalMain1} we will however employ direct consequences of the corresponding features on the induced differential operators, which is why Theorem \ref{thm:main2} appears in the above form. Lastly, the dimensional description of $\mathbb{C}$-ellipticity in the sense of \ref{item:KMScriticalMain4} is a by-product of the proof, which might be of independent interest. 
\begin{figure}
{\begin{tabular}{cc|c|c|}
& & \multicolumn{2}{ c| }{Dimension $n$} \\ \cline{3-4}
&&  $n=2$ & $n\geq 3$ \\ 
\hline
\multicolumn{1}{ c|  }{\multirow{2}{*}{Exponent $p$}} & $p=1$ &  $\mathbb{C}$-ellipticity/ ellipticity and cancellation & ellipticity \\ 
\cline{2-4}\multicolumn{1}{ c|  }{}          
& $p>1$  & ellipticity & ellipticity \\ \hline 
\end{tabular}}
\vspace{0.5cm}
\caption{Sharp conditions on the non-differential part $P\mapsto\mathscr{A}[P]$ to yield the optimal KMS-inequalities in $n$-dimensions.  }\label{fig:outcome}
\end{figure}

Since $\mathscr{A}=\sym$ induces a $\mathbb{C}$-elliptic differential operator in $n=2$ dimensions whereas $\mathscr{A}=\dev\sym$ does not, this result clarifies validity of~\eqref{eq:KMS2} for $\mathscr{A}=\sym$ and its failure for $\mathscr{A}=\dev\sym$. If $(p,n)=(1,2)$, by the non-availability of~\eqref{eq:BB}, inequalities~\eqref{eq:Korn2} cannot be approached by invoking Helmholtz decompositions and using critical estimates on the solenoidal parts. As known from \cite{GLP}, in the particular case of $\mathscr{A}[P]=\sym P$ one may use the specific structure of symmetric matrices (and their complementary parts, the skew-symmetric matrices) to deduce estimate~\eqref{eq:KMS2}. The use of this particularly simple structure also enters proofs of various variants of Korn-Maxwell inequalities. A typical instance is the use of \textsc{Nye}'s formula, allowing to express $\D P$ in terms of $\Curl P$ in three dimensions provided $P$ takes values in the set of skew-symmetric matrices $\mathfrak{so}(3)$; also see the discussion in \cite[Sec.~1.4]{Lewintan2} by \textsc{M\"{u}ller} and the second named authors.

However, in view of classifying the parts $\mathscr{A}[P]$ for which~\eqref{eq:KMS2} holds, we \emph{cannot} utilise the simple structure of  symmetric matrices. To resolve this issue, we will introduce so-called \emph{almost complementary parts} for the sharp class of parts $\mathscr{A}[P]$ for which~\eqref{eq:KMS2} can hold at all, and establish that these almost complementary parts have a sufficiently simple structure to get suitable access to strong Bourgain-Brezis estimates. 

\subsection{Subcritical KMS-inequalities and other variants}
The inequalities considered so far scale and thus the exponent $p^{*}$ cannot be improved for a given $p$. On balls $\ball_{r}(0)$, one might wonder which conditions on $\mathscr{A}$ need to be imposed for inequalities 
\begin{align}\label{eq:KMS3}
\Big(\dashint_{\ball_{r}(0)}|P|^{q}\dif x\Big)^{\frac{1}{q}}\leq c\,\Big(\dashint_{\ball_{r}(0)}|\mathscr{A}[P]|^{q}\dif x\Big)^{\frac{1}{q}} + c\,r\,\Big(\dashint_{\ball_{r}(0)}|\Curl P|^{p}\dif x \Big)^{\frac{1}{p}}
\end{align}
for all $P\in\hold_{c}^{\infty}(\ball_{r}(0);\R^{m\times n})$ 
to hold  with $c=c(p,q,n,\mathscr{A})>0$ provided $q$ is \emph{strictly less than the optimal exponent $p^{*}$}. Since the exponent $q$ here is strictly less than the non-improvable choice $p^{*}$, one might anticipate that even in the case $n=2$ ellipticity of the operator $\mathbb{A}u\coloneqq \mathscr{A}[\D u]$ alone suffices. Indeed we have the following
\begin{theorem}[Subcritical KMS-inequalities]\label{thm:main3}
Let $n\geq 2$, $m, N\in\N$, $1\leq p<\infty$ and $1\leq q<p^{*}$. Then the following hold: 
\begin{enumerate}
\item\label{item:qlessp1} Let $q=1$. Writing $\D u=\sum_{i=1}^{n}\mathbb{E}_{i}\,\partial_{i}u$ and $\A u\coloneqq \mathscr{A}[\D u]=\sum_{i=1}^{n}\mathbb{A}_{i}\,\partial_{i}u$ for suitable $\mathbb{E}_{i},\mathbb{A}_{i}\in\mathscr{L}(\R^{m};\R^{N})$ and all $u\in\hold_{c}^{\infty}(\R^{n};\R^{m})$,~\eqref{eq:KMS3} holds if and only if there exists a linear map $\mathrm{L}\colon\R^{N\times m}\to\R^{N\times m}$ such that $\mathbb{E}_{i}=\mathrm{L}\circ\,\mathbb{A}_{i}$ for all $1\leq i\leq n$. 
\item\label{item:qlessp2} Let $1<q<p^{*}$ if $p<n$ and $1<q<\infty$ otherwise. Then~\eqref{eq:KMS3} holds if and only if $\mathscr{A}$ induces an elliptic differential operator by virtue of $\mathbb{A}u=\mathscr{A}[\D u]$. 
\end{enumerate}
\end{theorem}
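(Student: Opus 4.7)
The plan is to treat part~\ref{item:qlessp2} by Helmholtz decomposition together with Korn's inequality and Sobolev/H\"older embedding, while part~\ref{item:qlessp1} admits a purely algebraic reformulation; the $q=1$ line is precisely where the averaging gain from the $\Curl$-term becomes critical and no super-linear amplification is available in $\lebe^{1}$. For~\ref{item:qlessp2}, I would first extend $P\in\hold_{c}^{\infty}(\ball_{r}(0);\R^{n\times n})$ by zero to $\R^{n}$ and write $P=\D u+R$ with $\Div R=0$ row-wise and $\Curl R=\Curl P$. Fractional integration (Lemma~\ref{lem:FIT}) then provides $\norm{R}_{\lebe^{p^{*}}(\R^{n})}\le c\,\norm{\Curl P}_{\lebe^{p}(\R^{n})}$ in the superlinear range, with a weak-$\lebe^{p^{*}}$ substitute for $p=1$ (relying on~\eqref{eq:BB} when $n\ge 3$ and on a stream-function argument when $n=2$), which is amply sufficient for the strictly subcritical target $q<p^{*}$. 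Ellipticity of $\A=\mathcal{A}[\D\cdot]$ combined with Calder\'on-Zygmund theory yields the $\lebe^{q}$-Korn inequality $\norm{\D u}_{\lebe^{q}(\R^{n})}\le c\,\norm{\A u}_{\lebe^{q}(\R^{n})}$ for $1<q<\infty$; writing $\A u=\mathcal{A}[P]-\mathcal{A}[R]$ and using $|\mathcal{A}[R]|\le c|R|$ gives $\norm{\D u}_{\lebe^{q}}\le c(\norm{\mathcal{A}[P]}_{\lebe^{q}}+\norm{R}_{\lebe^{q}})$. Since $q<p^{*}$, H\"older on $\ball_{r}(0)$ trades the $\lebe^{p^{*}}$-control of $R$ for $\lebe^{q}$-control at the cost of a factor $r^{n(1/q-1/p^{*})}$, and combining this with the $r^{n(1/p-1/q)}$ coming from the averages reproduces exactly the factor $r$ required in~\eqref{eq:KMS3}.

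The necessity in~\ref{item:qlessp2} is much cleaner: testing~\eqref{eq:KMS3} against $P=\D u$ for $u\in\hold_{c}^{\infty}(\ball_{r}(0);\R^{n})$ eliminates the $\Curl$-term and leaves the $\lebe^{q}$-Korn inequality on balls; scaling and density upgrade this to $\R^{n}$, and for $1<q<\infty$ the standard Fourier-multiplier picture forces $\mathcal{A}[v\otimes\xi]\ne 0$ for every non-zero $v,\xi$, i.e. ellipticity of $\A$. Turning to~\ref{item:qlessp1}, sufficiency is purely algebraic: if $\mathbb{E}_{i}=\mathrm{L}\circ\mathbb{A}_{i}$ for every $i$, then the column-wise identity $M=\sum_{i}(Me_{i})\otimes e_{i}$ combined with linearity of $\mathrm{L}\circ\mathcal{A}$ yields $\mathrm{L}(\mathcal{A}[M])=M$ for every $M\in\R^{n\times n}$, so $|P|\le\norm{\mathrm{L}}_{\mathrm{op}}|\mathcal{A}[P]|$ pointwise and~\eqref{eq:KMS3} is immediate \emph{without} invoking the $\Curl$-term.

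The necessity in~\ref{item:qlessp1} is the delicate direction and constitutes the main obstacle. Testing~\eqref{eq:KMS3} on gradients already yields an $\lebe^{1}$-Korn estimate on $\R^{n}$ and hence forces $\A$ to be $\mathbb{C}$-elliptic via Ornstein-type obstructions; however, the full algebraic condition additionally demands that the column-wise inverses of the $\mathbb{A}_{i}$ be globally consistent in a single linear $\mathrm{L}$. The strategy is to test~\eqref{eq:KMS3} on carefully localised, non-gradient matrix fields concentrating near potential obstructions in $\ker\mathcal{A}$, with the oscillation/localisation scale tuned so that the $\Curl$-contribution cannot mask the non-invertibility---a blow-up construction in the spirit of Example~\ref{ex:BBfail}. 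Because the $\lebe^{1}$-regime excludes the Calder\'on-Zygmund machinery, passing from the averaged inequality to the pointwise algebraic condition $\mathbb{E}_{i}=\mathrm{L}\circ\mathbb{A}_{i}$ is the subtlest step and must be performed by extremal test-field constructions that keep the $\Curl$-term uniformly controlled while isolating the algebraic rigidity.
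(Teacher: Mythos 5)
Your sufficiency argument for part~\ref{item:qlessp1} is correct and is in fact cleaner than the paper's: from $\mathbb{E}_{i}=\mathrm{L}\circ\mathbb{A}_{i}$ and $\mathbb{A}_{i}v=\mathcal{A}[v\otimes\mathbf{e}_{i}]$ you rightly deduce $\mathrm{L}(\mathcal{A}[M])=M$ for \emph{all} $M\in\R^{n\times n}$ by spanning with the rank-one pieces $v\otimes\mathbf{e}_{i}$, so the inequality trivialises pointwise without any use of $\Curl$. The paper routes this through the Helmholtz split but the content is the same. Your part~\ref{item:qlessp2} sufficiency via Helmholtz plus Calder\'on--Zygmund--Korn is also the paper's method, and the exponent bookkeeping in your H\"older trade-off does close (the $r$-factor in~\eqref{eq:KMS3} is exactly reproduced). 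Two points of comparison there: the paper simply applies the subcritical fractional integration estimate (Lemma~\ref{lem:FIT}~\ref{item:FItsubc}) to the solenoidal part, which already covers $p=1$ and every $q<p^{*}$ \emph{without} any recourse to~\eqref{eq:BB}, weak-type substitutes, or stream functions --- your extra machinery for $p=1$ is unnecessary. More importantly, your ``critical $\lebe^{p^{*}}$ then H\"older down'' route cannot be run for $p\ge n$, where $p^{*}$ is not defined but the theorem claims the result for all $1<q<\infty$; Lemma~\ref{lem:FIT}~\ref{item:FItsubc} handles $sp\ge n$ directly, so the paper's choice is forced if you want the full range.

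The genuine gap is in the necessity direction of part~\ref{item:qlessp1}. You correctly observe that inserting gradient fields $P=\D\varphi$ and letting $r\to\infty$ produces the $\lebe^{1}$-Korn estimate $\norm{\D\varphi}_{\lebe^{1}(\R^{n})}\leq c\norm{\A\varphi}_{\lebe^{1}(\R^{n})}$, but you then assert this ``only'' forces $\mathbb{C}$-ellipticity and that the full algebraic condition requires additional non-gradient test fields and a blow-up construction ``in the spirit of Example~\ref{ex:BBfail}.'' This misreads the state of the art: the \emph{sharp} form of Ornstein's Non-Inequality due to Kirchheim and Kristensen \cite[Thm.~1.3]{KirchheimKristensen} characterises validity of $\norm{\D u}_{\lebe^{1}}\leq c\norm{\A u}_{\lebe^{1}}$ precisely by the existence of a single linear $\mathrm{L}$ with $\mathbb{E}_{i}=\mathrm{L}\circ\mathbb{A}_{i}$ for all $i$. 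Thus testing on gradients already yields the full algebraic condition; the auxiliary blow-up scheme you sketch is not only superfluous, it is not concretised enough to be verified, and the claim that one must ``isolate the algebraic rigidity'' via extremal non-gradient fields is a wrong turn. The paper's argument is exactly: test on $\D\varphi$, send $r\to\infty$, cite Kirchheim--Kristensen, done.
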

Let us remark that Theorems~\ref{thm:main1}--\ref{thm:main3} all deal with exponents $p<n$. Inequalities that address the case $p\geq n$ and hereafter involve different canonical function spaces, are discussed in Sections~\ref{sec:sharpAllD} and~\ref{sec:subcritical}. We conclude this introductory section by discussing several results and generalisations  which appear as special cases of the results displayed in the present paper:
\begin{remark}
\begin{itemize}
\item[(i)] For the particular choice $\mathscr{A}[P]=\sym P$ and $q=p>1$, Theorem~\ref{thm:main3} yields with $n=3$ the result from \cite[Thm.~3]{Lewintan6} and for $n\ge2$ the result from \cite[Thm.~8]{Lewintan5}.
\item[(ii)] For the particular choice of $\mathscr{A}[P]=\sym P$ and $(p,n)=(1,2)$, Theorem~\ref{thm:domains} yields \cite[Thm.~11]{GLP} as a special case. 
\item[(iii)] For $\mathscr{A}[P]=\dev\sym P$ and $q=p>1$, Theorem~\ref{thm:main3} yields \cite[Thm. 3.5]{Lewintan4} and moreover catches up with the missing proof of the trace-free Korn-Maxwell inequality in $n=2$ dimensions.
\item[(iv)] For $\mathscr{A}[P]\in\{\sym P,\dev\sym P\}$, Theorems~\ref{thm:main1}--\ref{thm:main3} yield the sharp, scaling-optimal generalisation of  ~\cite[Thm. 3.5]{Lewintan4}, ~\cite[Thm.~8]{Lewintan5} and  ~\cite[Thm.~3]{Lewintan6}.
\item[(v)] For the part map $\mathscr{A}[P]=\skew P +\tr P\cdot\text{\usefont{U}{bbold}{m}{n}1}_{n}$ as arising e.g. in the study of time-incremental infinitesimal elastoplasticity ~\cite{NeffKnees}, now appears as a special case, cf.~Example~\ref{exampleNeffKnees}.
\end{itemize}
\end{remark}
\begin{figure}
{\begin{tabular}{cc|c|c|}
& & \multicolumn{2}{ c| }{Dimension $n$} \\ \cline{3-4}
&&  $n=2$ & $n\geq 3$ \\ 
\hline
\multicolumn{1}{ c|  }{\multirow{2}{*}{Exponent $p$}} & $p=1$ & \multicolumn{2}{ c|  }{\multirow{2}{*}{ellipticity}}  \\ 
\cline{2-2}\multicolumn{1}{ c|  }{}          
& $p>1$ & \multicolumn{2}{ r| }{\multirow{3}{*}{}} \\ \hline
\end{tabular}}
\caption{Sharp conditions on the non-differential part $P\mapsto\mathscr{A}[P]$ to yield  subcritical KMS-inequalities in $n$-dimensions.}
\end{figure}
\subsection{Organisation of the paper}
Besides the introduction, the paper is organised as follows: In Section~\ref{sec:prelims} we fix notation and gather preliminary results on differential operators and other auxiliary estimates. Based on our above discussion, we first address the most intricate {case} $(p,n)=(1,2)$ and thus establish Theorem~\ref{thm:main2} in Section~\ref{sec:n=2}. Theorems~\ref{thm:main1} and~\ref{thm:main3}, which do not require the use of almost complementary parts, are then established in Sections~\ref{sec:sharpAllD} and~\ref{sec:subcritical}. Finally, the appendix gathers specific Helmholtz decompositions used in the main part of the paper, contextualises the approaches and results displayed in the main part with previous contributions and provides auxiliary material on weighted Lebesgue and Orlicz functions. 

\section{Preliminaries}\label{sec:prelims}
In this preliminary section we fix notation, gather auxiliary notions and results and provide several examples that we shall refer to throughout the main part of the paper. 
\subsection{General notation}\label{sec:notation}
We denote $\omega_{n}\coloneqq \mathscr{L}^{n}(\ball_{1}(0))$ the $n$-dimensional Lebesgue measure of the unit ball. For $m\in\N_{0}$ and a finite dimensional (real) vector space $X$, we denote the space of $X$-valued polynomials on $\R^{n}$ of degree at most $m$ by $\mathscr{P}_{m}(\R^{n};X)$; moreover, for $1<p<\infty$, we denote ${\dot{\sobo}}{^{1,p}}(\R^{n};X)$ the homogeneous Sobolev space (i.e., the closure of $\hold_{c}^{\infty}(\R^{n};X)$ for the gradient norm $\norm{\D u}_{\lebe^{p}(\R^{n})}$) and ${\dot{\sobo}}{^{-1,p'}}(\R^{n};X):={\dot{\sobo}}{^{1,p}}(\R^{n};X)'$ its dual. Finally, to define the matrix curl $\Curl P$ for $P\colon\R^{n}\to\R^{n\times n}$, we recall from \cite{Lewintan1, Lewintan4} the generalised cross product  $\ntimes{n}:\R^n \times \R^n\to\R^{\frac{n(n-1)}{2}}$. Letting
\begin{align*}
 a =(\overline{a},a_n)^\top\in\R^n\quad \text{and}\quad b =(\overline{b},b_n)^\top\in\R^n \quad \text{with $\overline{a}, \overline{b}\in\R^{n-1}$},
\end{align*}
we inductively define
\begin{align}\label{eq:iductivecross}
 a\ntimes{n}b \coloneqq \begin{pmatrix} \overline{a}\ntimes{n-1}\overline{b} \\[1ex]
                b_n\cdot\overline{a}-a_n\cdot\overline{b}
                 \end{pmatrix}\in\R^{\frac{n(n-1)}{2}}
                \quad \text{where}\quad
\begin{pmatrix}a_1\\a_2  \end{pmatrix} \ntimes{2}
\begin{pmatrix}b_1\\b_2  \end{pmatrix} \coloneqq a_1\,b_2-a_2\,b_1.
\end{align}
The generalised cross product $a\ntimes{n}\cdot$ is linear in the second component and thus can be identified with a multiplication with a matrix denoted by $\nMat{a}{n}\in\R^{\frac{n(n-1)}{2}\times n}$ so that
\begin{align}\label{eq:frobenius}
 a\ntimes{n}b\eqqcolon \nMat{a}{n}b\qquad \text{for all } \ b\in\R^n.
\end{align}
For a vector field $a\colon\R^{n}\to\R^{n}$ and matrix field $P\colon\R^{n}\to\R^{m\times n}$, with $m\in\N$, we finally declare $\curl a$ and $\Curl P$ via
\begin{align}\label{eq:curldef}
\begin{split}
\curl a\coloneqq a\ntimes{n}(-\nabla)=\nMat{\nabla}{n}a,\qquad
\Curl P\coloneqq P\ntimes{n}(-\nabla)= P\nMat{\nabla}{n}^\top.
\end{split}
\end{align}
\subsection{Notions for differential operators}\label{sec:notdiffops}
Let $\mathbb{A}$ be a first order, linear, constant coefficient differential operator on $\R^{n}$ between two finite-dimensional real inner product spaces $V$ and $W$. In consequence, there exist linear maps $\A_{i}\colon V\to W$, $i\in\{1,...,n\}$ such that
\begin{align}\label{eq:form}
\A u = \sum_{i=1}^{n}\A_{i}\partial_{i}u.
\end{align}
For $\Omega\subseteq\R^n$ open and $1\le q\le \infty$, we set 
\begin{align}\label{eq:Waqspace}
\sobo^{\A,q}(\Omega)\coloneqq\{u\in\lebe^{q}(\Omega;V)\mid \norm{u}_{\lebe^q(\Omega)}+\norm{\A u}_{\lebe^{q}(\Omega)}<\infty \}.
\end{align}
With an operator of the form~\eqref{eq:form}, we associate the \emph{symbol map}
\begin{align}\label{eq:fouriersymbol}
\A[\xi]\colon V\to W,\qquad\A[\xi]v\coloneqq \sum_{i=1}^{n}\xi_{i}\A_{i}v,\qquad \xi\in\R^{n},v\in V. 
\end{align}
Following \cite{BDG}, we denote 
\begin{align}\label{eq:bilinearpairing}
\otimes_{\A}\colon V\times\R^{n}\to W,\qquad\otimes_{\A}(\xi,v)\coloneqq v\otimes_{\A}\xi \coloneqq  \A[\xi]v. 
\end{align}
In the specific case where $V=\R^{m}$, $W=\R^{m\times n}$ and $\A=\D$ \ is the derivative, this gives us back the usual tensor product $v\otimes\xi=v\, \xi^{\top}$; if $V=\R^{n}$, $W=\operatorname{Sym}(n)=\R_{\sym}^{n\times n}$, it gives us back the usual symmetric tensor product $v\odot\xi=\frac{1}{2}(v\otimes\xi+\xi\otimes v)$. 

Elements $w\in W$ of the form $w=\Atimes{v}{\xi}$ are called \emph{pure $\A$-tensors}. We now define 
\begin{align*}
\mathscr{R}(\A)\coloneqq \mathrm{span}(\{\Atimes{a}{b}\mid a\in V,\;b\in\R^{n}\}). 
\end{align*}
Moreover, if $\{\mathbf{a}_{1},...,\mathbf{a}_{N}\}$ is a basis of $\R^{N}$ and $\{\mathbf{e}_{1},...,\mathbf{e}_{n}\}$ is a basis of $\R^{n}$, then by linearity the set 
$\{\mathbf{a}_{j}\otimes_{\A}\mathbf{e}_{i}\mid 1\leq i\leq n,\;1\leq j \leq N\}$ spans $\mathscr{R}(\A)$ and hence contains a basis of $\mathscr{R}(\A)$.  For our future objectives, it is important to note that this result holds irrespectively of the particular choice of bases. 

We now recall some notions on differential operators gathered from \cite{BDG,Hoermander,Smith,Spencer,Va11}. An operator of the form~\eqref{eq:form} is called \emph{elliptic} provided the Fourier symbol $\A[\xi]\colon V\to W$ is injective for any $\xi\in\R^{n}\backslash\{0\}$. This is equivalent to the Legendre-Hadamard ellipticity of $W(P)=\frac12|\mathscr{A}[P]|^2$ in the sense that the bilinear form $\D^2_P W(P)(\xi\otimes\eta,\xi\otimes\eta)=|\mathscr{A}(\xi\otimes \eta)|^2>0$ is strictly positive definite. A strengthening of this notion is that of \emph{$\mathbb{C}$-ellipticity}. Here we require that for any $\xi\in\mathbb{C}^{n}\backslash\{0\}$ the associated symbol map $\A[\xi]\colon V+\imag V\to W+\imag W$ is injective. 

\begin{lemma}[\cite{GmRa1,GmRaVa1,Va11}]\label{lem:helpfulDiffOp}
Let $\A$ be a first order differential operator of the form~\eqref{eq:form} with $n=2$. Then the following are equivalent: 
\begin{enumerate}
\item\label{item:CA1} $\A$ is $\mathbb{C}$-elliptic. 
\item\label{item:CA2} $\A$ is elliptic and cancelling, meaning that $\A$ is elliptic in the above sense and 
\begin{align*}
\bigcap_{\xi\in\R^{2}\backslash\{0\}}\A[\xi](V)=\{0\}. 
\end{align*}
\item\label{item:CA3} There exists a constant $c>0$ such that the Sobolev estimate $\norm{u}_{\lebe^{2}(\R^{2})}\leq c\norm{\A u}_{\lebe^{1}(\R^{2})}$ holds for all $u\in\hold_{c}^{\infty}(\R^{2};V)$. 
\end{enumerate}
\end{lemma}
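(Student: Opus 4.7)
The plan is to establish the equivalences in the cyclic pattern (a) $\Rightarrow$ (c) $\Rightarrow$ (b) $\Rightarrow$ (a), exploiting that (a) $\Leftrightarrow$ (b) is a purely algebraic two-dimensional statement while the analytic core is the $\lebe^{1}$-Sobolev estimate of Van Schaftingen.

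For (a) $\Rightarrow$ (c) I would first observe that $\mathbb{C}$-ellipticity forces the usual ellipticity of $\A$ and, simultaneously, the \emph{cancelling} condition: if $0\neq w\in\bigcap_{\xi\in\R^{2}\setminus\{0\}}\A[\xi](V)$, then for every real $\xi$ one finds $v_{\xi}\in V$ with $\A[\xi]v_{\xi}=w$, and a continuity/dimension count along $S^{1}$ together with complexification of $\xi$ produces a nontrivial element of $\ker\A[\zeta]$ for some $\zeta\in\mathbb{C}^{2}\setminus\{0\}$, contradicting $\mathbb{C}$-ellipticity. Granted that $\A$ is elliptic and cancelling, Van Schaftingen's $\lebe^{1}$-inequality on $\R^{n}$ yields $\norm{u}_{\lebe^{n/(n-1)}(\R^{n})}\lesssim \norm{\A u}_{\lebe^{1}(\R^{n})}$ for $u\in\hold_{c}^{\infty}(\R^{n};V)$; specialising $n=2$ gives precisely (c).

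The implication (c) $\Rightarrow$ (b) proceeds by contraposition via sharp test functions. If ellipticity fails, pick $\xi_{0}\in\R^{2}\setminus\{0\}$ and $v\in V\setminus\{0\}$ with $\A[\xi_{0}]v=0$ and test the inequality on $u_{k}(x)\coloneqq \chi_{k}(x)\,v\,\varphi(\xi_{0}\cdot x)$ for a profile $\varphi\in\hold_{c}^{\infty}(\R)$ and a sequence of cutoffs $\chi_{k}$ supported on increasingly large strips transverse to $\xi_{0}$; the right-hand side stays of lower order while the left-hand side blows up. If cancelling fails, say $0\neq w\in\bigcap_{\xi\neq 0}\A[\xi](V)$, then $w$ lies in the image of the Fourier symbol in every direction, so one can construct $u_{k}$ whose Fourier transform is concentrated on a thin annulus in such a way that $\A u_{k}$ stays controlled in $\lebe^{1}$ while $u_{k}$ itself carries mass essentially parallel to $w$ and violates the $\lebe^{2}$-bound, in the spirit of the counterexample in Example~\ref{ex:BBfail}.

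Finally, for (b) $\Rightarrow$ (a) I would argue directly that in $n=2$ the cancelling condition already encodes complex injectivity of the symbol. Suppose $\A$ is elliptic but not $\mathbb{C}$-elliptic: there exists $\zeta=\xi+\imag\eta\in\mathbb{C}^{2}\setminus\{0\}$ and $v=v_{1}+\imag v_{2}\in V+\imag V\setminus\{0\}$ with $\A[\zeta]v=0$. Ellipticity precludes $\xi,\eta$ from being real-collinear, so they form an $\R$-basis of $\R^{2}$; after a linear change of variable one may assume $\zeta=\mathbf{e}_{1}+\imag\mathbf{e}_{2}$. Splitting real and imaginary parts produces the relations $\A_{1}v_{1}=\A_{2}v_{2}$ and $\A_{1}v_{2}=-\A_{2}v_{1}$, giving a common vector $w\coloneqq \A_{1}v_{1}\in W$. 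The crucial algebraic step, and the main obstacle, is to show that this $w$ is nonzero and actually lies in $\A[\xi](V)$ for \emph{every} $\xi\in\R^{2}\setminus\{0\}$, directly contradicting cancelling. This is done by noting that the polynomial $P(z)\coloneqq \A[\mathbf{e}_{1}+z\mathbf{e}_{2}]$ has $z=\imag$ as a complex characteristic root; parameterising real directions as $\xi=\mathbf{e}_{1}+t\mathbf{e}_{2}$ with $t\in\R$ and using a partial fractions/Nullstellensatz-style decomposition of the matrix-valued polynomial inverse on the complement of the characteristic variety yields an explicit pre-image of $w$ under $\A[\xi]$ for every real $\xi$. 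Handling the residual direction $\xi=\mathbf{e}_{2}$ separately via ellipticity completes the contradiction and proves $\mathbb{C}$-ellipticity.
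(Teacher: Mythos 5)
The paper does not prove Lemma~\ref{lem:helpfulDiffOp}; it is quoted from \cite{GmRa1,GmRaVa1,Va11}, and the remark immediately following the statement already records the logical breakdown: \ref{item:CA2}~$\Leftrightarrow$~\ref{item:CA3} is Van Schaftingen's theorem in every dimension $n\geq 2$, \ref{item:CA1}~$\Rightarrow$~\ref{item:CA2} holds in every dimension, and only \ref{item:CA2}~$\Rightarrow$~\ref{item:CA1} is specific to $n=2$. Your cyclic plan (a)~$\Rightarrow$~(c)~$\Rightarrow$~(b)~$\Rightarrow$~(a) is a fine reorganisation, but as written two legs are not proofs. The claim that $\mathbb{C}$-ellipticity implies cancelling via ``a continuity/dimension count along $S^{1}$ together with complexification of $\xi$'' is a wave of the hand: nothing you describe shows how the assumed common image vector $w$ produces a complex characteristic pair, and since this implication is valid in \emph{all} dimensions it must in any case be an algebraic argument as in \cite{GmRa1,GmRaVa1} rather than a two-dimensional topological one. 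Similarly, your thin-annulus Fourier construction for ``cancelling fails $\Rightarrow$ (c) fails'' is only gestured at; it is exactly the content of \cite{Va11} and not reproduced by what you write.

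The more substantive issue is (b)~$\Rightarrow$~(a). You correctly reduce to $\zeta=\mathbf{e}_{1}+\imag\mathbf{e}_{2}$, $v=v_{1}+\imag v_{2}\neq 0$ with $\A[\zeta]v=0$, and extract the real/imaginary relations
\begin{align*}
\A_{1}v_{1}=\A_{2}v_{2}\eqqcolon w,\qquad \A_{1}v_{2}+\A_{2}v_{1}=0,
\end{align*}
but then declare the remaining step ``the main obstacle'' and propose a ``partial fractions/Nullstellensatz-style decomposition of the matrix-valued polynomial inverse'' with a separate treatment of $\xi=\mathbf{e}_{2}$. None of that is needed. Ellipticity forces $w\neq 0$ (if $w=0$ then $\A_{1}v_{1}=0=\A_{2}v_{2}$ gives $v_{1}=v_{2}=0$), and for arbitrary $\xi=a\mathbf{e}_{1}+b\mathbf{e}_{2}\neq 0$ the vector $u_{\xi}\coloneqq (a\,v_{1}+b\,v_{2})/(a^{2}+b^{2})\in V$ satisfies
\begin{align*}
\A[\xi]u_{\xi}=\frac{1}{a^{2}+b^{2}}\Big(a^{2}\A_{1}v_{1}+b^{2}\A_{2}v_{2}+ab\,(\A_{1}v_{2}+\A_{2}v_{1})\Big)=w,
\end{align*}
so $w\in\A[\xi](V)$ for every $\xi\neq 0$ and $\A$ fails to be cancelling. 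The whole of (b)~$\Rightarrow$~(a) is inverting a $2\times 2$ matrix with determinant $a^{2}+b^{2}>0$; labelling it the main obstacle and reaching for algebraic geometry misplaces where the actual work in the lemma sits, namely in the analytic equivalence \ref{item:CA2}~$\Leftrightarrow$~\ref{item:CA3} that you treat as a black box.
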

This equivalence does not hold true in $n\geq 3$ dimensions (see~\cite[Sec.~3]{GmRaVa1} for a discussion; for general dimensions $n\geq 2$, one has $\text{\ref{item:CA2}}\Leftrightarrow\text{\ref{item:CA3}}$, see~\cite{Va11}, but only $\text{\ref{item:CA1}}\Rightarrow\text{\ref{item:CA2}}$, see~\cite{GmRa1,GmRaVa1}). The following lemma is essentially due to \textsc{Smith}~\cite{Smith}; also see \cite{DieningGmeineder,Kalamajska} for the explicit forms of the underlying Poincar\'{e}-type inequalities:
\begin{lemma}\label{lem:Poincare} 
An operator $\mathbb{A}$ of the form~\eqref{eq:form} is $\mathbb{C}$-elliptic if and only if there exists $m\in\mathbb{N}_{0}$ such that $\ker(\A)\subset\mathscr{P}_{m}(\R^{n};V)$. Moreover, for any open, bounded and connected Lipschitz domain $\Omega\subset\R^{n}$ there exists $c=c({q},n,\A,\Omega)>0$ such that for any $u\in\sobo^{\A,{q}}(\Omega)$ there is $\Pi_{\A}u\in\ker(\A)$ such that 
\begin{align*}
\norm{u-\Pi_{\A}u}_{\lebe^{{q}}(\Omega)}\leq c\norm{\mathbb{A}u}_{\lebe^{{q}}(\Omega)}. 
\end{align*}
\end{lemma}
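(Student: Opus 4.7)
The plan has two independent parts: first the characterisation of $\mathbb{C}$-ellipticity through the kernel of $\mathbb{A}$, and then the Poincar\'{e}-type estimate built on top of it.

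For the forward implication $\mathbb{C}$-ellipticity $\Rightarrow\ker(\mathbb{A})\subset\mathscr{P}_{m}(\R^{n};\R^{n})$ I would follow the classical Nullstellensatz route: since the $\dim V\times\dim V$ minors of the matrix-polynomial $\xi\mapsto \mathbb{A}[\xi]$ vanish simultaneously in $\mathbb{C}^{n}$ only at $\xi=0$, Hilbert's Nullstellensatz yields an integer $N\geq 1$ and a matrix-polynomial $\mathbb{B}[\xi]$ satisfying $\mathbb{B}[\xi]\,\mathbb{A}[\xi]=Q(\xi)\,\mathrm{Id}_{V}$ for some scalar polynomial $Q$ whose only $\mathbb{C}$-zero is the origin (a concrete choice is a power of $\sum_{i}\xi_{i}^{2N}$). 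Transcribing this symbol-level identity to the operator level gives $\mathbb{B}(\D)\,\mathbb{A}=Q(\D)\,\mathrm{Id}_{V}$ with $Q(\D)$ a scalar elliptic operator of known order. Every $u\in\ker(\mathbb{A})$ is therefore annihilated by $Q(\D)$; in particular it is real-analytic, and a tempered-distribution Fourier-transform argument (localised if necessary) shows that $\widehat{u}$ is supported at $\{0\}$. Consequently $u$ is a polynomial of degree bounded by the order of $Q(\D)$, supplying the uniform $m$.

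For the converse, failure of $\mathbb{C}$-ellipticity delivers $\xi_{0}\in\mathbb{C}^{n}\setminus\{0\}$ and $v_{0}\in V+\imag V\setminus\{0\}$ with $\mathbb{A}[\xi_{0}]v_{0}=0$, and $u(x)\coloneqq v_{0}\,e^{\langle\xi_{0},x\rangle}$ belongs to $\ker(\mathbb{A})$. Taking real and imaginary parts produces a non-polynomial element of $\ker(\mathbb{A})$, ruling out any inclusion $\ker(\mathbb{A})\subset\mathscr{P}_{m}(\R^{n};\R^{n})$. The Poincar\'{e}-type inequality then follows by a Peetre--Tartar/compactness argument: by the first part $\ker(\mathbb{A})$ is finite-dimensional, so the $\lebe^{2}(\Omega)$-orthogonal projection $\Pi_{\mathbb{A}}$ onto it is well-defined and continuous on $\lebe^{p}(\Omega)$; $\mathbb{C}$-ellipticity moreover supplies a Rellich-type compact embedding $\sobo^{\mathbb{A},p}(\Omega)\hookrightarrow\hookrightarrow\lebe^{p}(\Omega)$ on bounded Lipschitz domains (in the spirit of Smith and Kalamajska). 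If the inequality were to fail there would exist $u_{k}\in\sobo^{\mathbb{A},p}(\Omega)$ with $\Pi_{\mathbb{A}}u_{k}=0$, $\norm{u_{k}}_{\lebe^{p}(\Omega)}=1$ and $\norm{\mathbb{A}u_{k}}_{\lebe^{p}(\Omega)}\to 0$; compactness yields an $\lebe^{p}$-limit $u$ with $\mathbb{A}u=0$ and $\Pi_{\mathbb{A}}u=0$, hence $u=0$, contradicting $\norm{u}_{\lebe^{p}(\Omega)}=1$.

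The main obstacle is the degree bound in the forward implication: turning the algebraic $\mathbb{C}$-ellipticity hypothesis into the quantitative operator identity $\mathbb{B}(\D)\mathbb{A}=Q(\D)\,\mathrm{Id}_{V}$ via the Nullstellensatz is classical but requires careful bookkeeping of the minors, and this identity governs both the polynomial character of kernel elements and the uniformity of their degrees across $\ker(\mathbb{A})$. Once it is in hand, both the equivalence and the Poincar\'{e} estimate reduce to by-now standard Fourier support and compactness arguments.
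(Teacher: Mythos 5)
The paper does not actually prove Lemma~\ref{lem:Poincare}; it cites it to Smith, Ka\l{}amajska, and Diening--Gmeineder. So your proposal is being measured against that literature rather than against an argument in the paper.

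The forward implication as you wrote it has a genuine gap centred on the single scalar $Q$. You want a matrix polynomial $\mathbb{B}[\xi]$ and a scalar $Q(\xi)$ with $\mathbb{B}[\xi]\mathbb{A}[\xi]=Q(\xi)\,\mathrm{Id}_{V}$ and with $\{0\}$ as the only complex zero of $Q$, and you suggest a power of $\sum_{i}\xi_{i}^{2N}$. For $n\geq 2$ no non-constant polynomial on $\mathbb{C}^{n}$ has zero set $\{0\}$: the complex variety of a non-constant polynomial has dimension $\geq n-1\geq 1$; concretely, $\sum_{i}\xi_{i}^{2N}$ already vanishes at $\xi=(1,e^{\imag\pi/(2N)},0,\dots,0)$. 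Even if one retreats to the \emph{real} zero set (which for $\sum_{i}\xi_{i}^{2N}$ is indeed $\{0\}$), the resulting conclusion ``$u$ is a polynomial of degree bounded by $\operatorname{ord}Q(\D)$'' is false: for $Q=|\xi|^{2}$ one has $Q(\D)=\Delta$, and the harmonic polynomials $\Re\,(\xi_{1}+\imag\xi_{2})^{k}$ have unbounded degree. Finally, the Fourier-support step presupposes $u$ is tempered, which is not given; multiplying by a cut-off does not commute with $Q(\D)$.

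All three problems disappear if you replace the single scalar $Q$ by a finite family of monomials, which is Smith's original device. By the Nullstellensatz there is an $N$ with $\xi_{i}^{N}$ in the ideal $I$ generated by the $\dim V\times\dim V$ minors of $\mathbb{A}[\xi]$ for each $i$, hence every monomial $\xi^{\alpha}$ with $|\alpha|=M\coloneqq n(N-1)+1$ lies in $I$. Writing each such $\xi^{\alpha}$ as $\sum_{S}q_{S}^{\alpha}(\xi)\det A_{S}[\xi]$ over the $\dim V\times\dim V$ submatrices $A_{S}[\xi]=P_{S}\mathbb{A}[\xi]$, the adjugate identity $\operatorname{adj}(A_{S}[\xi])A_{S}[\xi]=\det A_{S}[\xi]\,\mathrm{Id}_{V}$ yields a matrix polynomial $\mathbb{B}^{\alpha}[\xi]$ with $\mathbb{B}^{\alpha}[\xi]\mathbb{A}[\xi]=\xi^{\alpha}\,\mathrm{Id}_{V}$, i.e., $\mathbb{B}^{\alpha}(\D)\mathbb{A}=\D^{\alpha}$ at the operator level. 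Then $\mathbb{A}u=0$ on any connected open set forces $\D^{\alpha}u=0$ for all $|\alpha|=M$, so $u$ is a polynomial of degree $<M$. No Fourier transform, no temperedness, and a uniform degree bound all at once.

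The converse (the exponential solution $u(x)=v_{0}e^{\langle\xi_{0},x\rangle}$ with real/imaginary splitting) is correct, and the Peetre--Tartar compactness argument for the Poincar\'{e} inequality is the right route. Just state the compact embedding $\sobo^{\A,p}(\Omega)\hookrightarrow\hookrightarrow\lebe^{p}(\Omega)$ as an explicit external input --- it holds for $\mathbb{C}$-elliptic $\A$ on bounded Lipschitz domains and follows from Smith-type integral representation formulas, not from the inequality you are proving --- so that the argument is not circular.
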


We conclude with the following example to be referred to frequently, putting the classical operators $\nabla$, $\varepsilon$ and $\varepsilon^{D}$ into the framework of~\eqref{eq:form}. For this, it is convenient to make the identification 
\begin{align*}
\begin{pmatrix} a & b \\ c & d \end{pmatrix}\longleftrightarrow \begin{pmatrix} a \\ b \\ c \\ d \end{pmatrix},\qquad a,b,c,d\in\R.
\end{align*}
\begin{example}[Gradient, deviatoric gradient, symmetric gradient and deviatoric symmetric gradient]
In the following, let $n=2$, $V=\R^{2}$ and $W=\R^{2\times 2}$. The derivative fits into the framework~\eqref{eq:form} by taking 
\begin{align}\label{eq:gradsymb}\tag{$\nabla$}
\A_{1} = \begin{pmatrix} 1 & 0 \\ 0 & 0 \\ 0 & 1 \\ 0 & 0 \end{pmatrix},\;\;\;\A_{2} = \begin{pmatrix} 0 & 0 \\ 1 & 0 \\ 0 & 0 \\ 0 & 1 \end{pmatrix}, 
\end{align}
the deviatoric gradient $\nabla^{D}u\coloneqq \dev\D u =\D u-\frac{1}{n}\div u\cdot\text{\usefont{U}{bbold}{m}{n}1}_{n}$ by taking 
\begin{align}\label{eq:devgradsymb}\tag{$\nabla^{D}$}
\A_{1} = \begin{pmatrix} \frac{1}{2} & 0 \\ 0 & 0 \\ 0 & 1 \\ -\frac{1}{2} & 0 \end{pmatrix},\;\;\;\A_{2} = \begin{pmatrix} 0 & -\frac{1}{2} \\ 1 & 0 \\ 0 & 0 \\ 0 & \frac{1}{2} \end{pmatrix}, 
\end{align}
whereas the symmetric gradient is recovered by taking
\begin{align}\label{eq:symgradsymb}\tag{$\varepsilon$}
\A_{1} = \begin{pmatrix} 1 & 0 \\ 0 & \frac{1}{2} \\ 0 & \frac{1}{2} \\ 0 & 0 \end{pmatrix},\;\;\;\A_{2} = \begin{pmatrix} 0 & 0 \\ \frac{1}{2} & 0 \\ \frac{1}{2} & 0 \\ 0 & 1 \end{pmatrix}.  
\end{align}
Finally, the deviatoric symmetric gradient $\varepsilon^{D}(u)\coloneqq \dev\sym\D u=\sym \D u -\frac{1}{n}\div u\cdot\text{\usefont{U}{bbold}{m}{n}1}_{n}$ is retrieved by 
\begin{align}\label{eq:devsymgradsymb}\tag{$\varepsilon^{D}$}
\A_{1} = \begin{pmatrix} \frac{1}{2} & 0 \\ 0 & \frac{1}{2} \\ 0 & \frac{1}{2} \\ -\frac{1}{2} & 0 \end{pmatrix},\;\;\;\A_{2} = \begin{pmatrix} 0 & -\frac{1}{2} \\ \frac{1}{2} & 0 \\ \frac{1}{2} & 0 \\ 0 & \frac{1}{2} \end{pmatrix}.
\end{align}
Similar representations can be found in higher dimensions. However, let us remark that whereas $\nabla,\nabla^{D}$ and $\varepsilon$ are $\mathbb{C}$-elliptic in all dimensions $n\geq 2$, the trace-free symmetric gradient is $\mathbb{C}$-elliptic precisely in $n\geq 3$ dimensions (cf.~\cite[Ex.~2.2]{BDG}). Another class of operators that arises in the context of infinitesimal elastoplasticity but is handled most conveniently by the results displayed below, is discussed in Example~\ref{exampleNeffKnees}.  
\end{example}

\subsection{{Miscellaneous} bounds} 
In this section, we record some auxiliary material on integral operators. Given $0<s<n$, the \emph{$s$-th order Riesz potential} $\mathcal{I}_{s}(f)$ of a locally integrable function $f$ is defined by 
\begin{align}
\mathcal{I}_{s}(f)(x)\coloneqq c_{s,n}\int_{\R^{n}}\frac{f(y)}{|x-y|^{n-s}}\dif y,\qquad x\in\R^{n}, 
\end{align}
where $c_{s,n}>0$ is a suitable finite constant, the precise value of which shall not be required in the sequel. We now have 
\begin{lemma}[Fractional integration, {\cite[Thm.~3.1.4]{AdamsHedberg},\,\cite[\S 2.7]{diBenedetto},\cite[Sec. V.1]{Stein}}]\label{lem:FIT}
Let $n\geq 2$ and $0<s<n$. 
\begin{enumerate}
\item\label{item:FItcrit} For any $1<p<\infty$ with $s\,p<n$ the Riesz potential $\mathcal{I}_{s}$ is a bounded linear operator $\mathcal{I}_{s}\colon\lebe^{p}(\R^{n})\to \lebe^{\frac{np}{n-sp}}(\R^{n})$. 
\item\label{item:FItsubc} Let $1\leq p<\infty$. For any $1\leq q <\frac{np}{n-sp}$ if $s\,p<n$ or all $1\leq q<\infty$ if $sp\geq n$ and any $r>0$ there exists $c=c(p,q,n,s,r)>0$ such that we have 
\begin{align*}
\norm{\mathcal{I}_{s}(\mathbbm{1}_{\ball_{r}(0)}f)}_{\lebe^{q}(\ball_{r}(0))}\leq c\norm{f}_{\lebe^{p}(\ball_{r}(0))}\qquad\text{for all}\;f\in\hold^{\infty}(\R^{n}).
\end{align*}
\end{enumerate}
\end{lemma}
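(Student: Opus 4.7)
The plan is to prove the fractional integration theorem by first establishing the global Hardy--Littlewood--Sobolev inequality in part~(a) via Hedberg's pointwise estimate, and then deducing part~(b) by reduction to~(a) on the bounded ball.

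For part~(a), the main ingredient is a pointwise bound. I would split the Riesz potential at a radius $R>0$ as
\[
|\mathcal{I}_{s}(f)(x)|\leq c\int_{|x-y|<R}\frac{|f(y)|}{|x-y|^{n-s}}\dif y+c\int_{|x-y|\ge R}\frac{|f(y)|}{|x-y|^{n-s}}\dif y.
\]
A dyadic annular decomposition of the first integral yields a bound of the form $c\,R^{s}\,\mathrm{M}(f)(x)$, where $\mathrm{M}$ is the Hardy--Littlewood maximal operator. For the second integral, H\"older's inequality combined with the condition $(n-s)p'>n$, which is equivalent to $sp<n$, gives a bound $c\,R^{s-n/p}\norm{f}_{\lebe^{p}(\R^{n})}$. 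Optimising the sum in $R>0$ yields the pointwise Hedberg estimate
\[
|\mathcal{I}_{s}(f)(x)|\leq c\,\mathrm{M}(f)(x)^{1-sp/n}\norm{f}_{\lebe^{p}(\R^{n})}^{sp/n},
\]
from which (a) follows by raising to the power $p^{*}=np/(n-sp)$ and invoking the $\lebe^{p}$-boundedness of $\mathrm{M}$, which crucially requires $p>1$.

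For part~(b), I would distinguish three cases. If $sp<n$ and $p>1$, part~(a) combined with H\"older's inequality on $\ball_{r}(0)$ immediately yields the $\lebe^{q}$-bound for any $1\le q<p^{*}$. If $sp<n$ and $p=1$, Hedberg's argument only provides the weak-type estimate $\mathcal{I}_{s}\colon\lebe^{1}(\R^{n})\to\lebe^{p^{*},\infty}(\R^{n})$, obtained by composing the $p=1$ version of Hedberg's pointwise bound with the weak-type $(1,1)$ maximal inequality. On the bounded set $\ball_{r}(0)$ this weak bound is upgraded to a strong $\lebe^{q}$-bound via Kolmogorov's inequality, exploiting $q<p^{*}$ and the finiteness of $|\ball_{r}(0)|$. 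If $sp\ge n$, H\"older's inequality is applied directly to $\mathcal{I}_{s}(\mathbbm{1}_{\ball_{r}(0)}f)(x)$: for $sp>n$ the kernel $y\mapsto |x-y|^{s-n}\mathbbm{1}_{\ball_{r}(0)}(y)$ is uniformly bounded in $\lebe^{p'}$, since $(n-s)p'<n$, which yields a uniform $\lebe^{\infty}$-bound on $\ball_{r}(0)$ and hence $\lebe^{q}$-bounds for every $1\le q<\infty$. The endpoint $sp=n$ is reduced to the case $sp<n$ by replacing $p$ with a slightly smaller exponent $\widetilde{p}<p$ chosen so that $s\widetilde{p}<n$ and $\widetilde{p}^{\,*}>q$, followed by H\"older's inequality on the ball to pass from $\lebe^{\widetilde{p}}$ to $\lebe^{p}$.

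The main obstacle is the endpoint $p=1$ in~(b): the strong-type global bound from~(a) is unavailable, and one has to carefully combine the weak-type $\lebe^{1}\to\lebe^{p^{*},\infty}$ inequality with the strict subcriticality $q<p^{*}$ and the finite measure of the ball through a Kolmogorov-type argument. The remaining cases are largely routine once the pointwise Hedberg estimate is in place.
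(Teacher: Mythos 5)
The paper does not prove Lemma~\ref{lem:FIT}: it is stated as a citation to Adams--Hedberg, DiBenedetto, and Stein, and no argument is supplied in the text. So there is no internal proof to compare against, and your proposal must be judged on its own.

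Your argument is correct and is in fact the classical one found in the cited references. Part~(a) is Hedberg's pointwise inequality: the dyadic decomposition of the near part gives $cR^{s}\mathrm{M}f(x)$, H\"older on the far part with $(n-s)p'>n\Leftrightarrow sp<n$ gives $cR^{s-n/p}\norm{f}_{\lebe^{p}}$, and optimising in $R$ produces $|\mathcal{I}_{s}f(x)|\le c\,(\mathrm{M}f(x))^{1-sp/n}\norm{f}_{\lebe^{p}}^{sp/n}$. Raising to the power $p^{*}$ and using $\|\mathrm{M}\|_{\lebe^{p}\to\lebe^{p}}$ for $p>1$ gives~(a) with the correct exponent bookkeeping, which I verified: $(1-sp/n)p^{*}=p$ and the total exponent on $\norm{f}_{\lebe^p}$ sums to $p^{*}$. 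For part~(b), your three-way case split is the right one. In the subcritical $p>1$, $sp<n$ case, H\"older on $\ball_{r}(0)$ transfers~(a) to all $q<p^{*}$. For $p=1$ (which forces $sp<n$ since $s<n$), the Hedberg bound composed with the weak-$(1,1)$ maximal inequality yields $\mathcal{I}_{s}\colon\lebe^{1}\to\lebe^{n/(n-s),\infty}$, and Kolmogorov's inequality on the finite-measure ball upgrades this to strong $\lebe^{q}$ for every $q<n/(n-s)$, precisely the range claimed. For $sp>n$ the truncated kernel is uniformly in $\lebe^{p'}$ because $(n-s)p'<n$, giving an $\lebe^{\infty}$-bound; for $sp=n$ (only possible when $p>1$, again since $s<n$) the reduction to a slightly smaller $\widetilde{p}$ with $\widetilde{p}^{*}>q$ is always possible because $\widetilde{p}^{*}\to\infty$ as $\widetilde{p}\nearrow n/s$. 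I see no gap.
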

In the regime $s=0$ we require two other ingredients as follows. The first one is  
\begin{lemma}[{Ne\v{c}as-Lions}, {\cite[Thm.~1]{Necas}}]\label{lem:Necas}
Let $\Omega\subset\R^{n}$ be open and bounded with Lipschitz boundary and let $N\in\mathbb{N}$. Then for any $1<p<\infty$ there exists a constant $c=c(p,n,N,\Omega)>0$ such that we have
\begin{align*}
\norm{f}_{\lebe^{p}(\Omega)}\leq c\,\Big(\norm{f}_{\sobo^{-1,p}(\Omega)}+\norm{\D f}_{\sobo^{-1,p}(\Omega)} \Big)\qquad\text{for all}\;f\in\mathscr{D}'(\Omega;\R^{N}). 
\end{align*}
\end{lemma}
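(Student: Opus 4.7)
The approach I would take is by duality combined with a Bogovski\u\i{}-type correction. Since the assertion is componentwise, we may assume $N=1$. Using the $L^p$--$L^{p'}$ duality,
\[
\norm{f}_{\lebe^p(\Omega)} = \sup\Big\{ \langle f,\phi\rangle : \phi\in \hold_c^\infty(\Omega),\ \norm{\phi}_{\lebe^{p'}(\Omega)}\le 1\Big\},
\]
so it suffices to bound $|\langle f,\phi\rangle|$ by $c\bigl(\norm{f}_{\sobo^{-1,p}(\Omega)}+\norm{\D f}_{\sobo^{-1,p}(\Omega)}\bigr)\norm{\phi}_{\lebe^{p'}(\Omega)}$.

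The plan is to split the test function $\phi$ into a fixed rank-one piece carrying its mean and a pure divergence. Fix once and for all $\eta\in\hold_c^\infty(\Omega)$ with $\int_\Omega\eta=1$, and set $c_\phi\coloneqq \int_\Omega\phi$. Then $\phi-c_\phi\eta$ has vanishing integral on the bounded Lipschitz domain $\Omega$, so by the Bogovski\u\i{} theorem there exists $\psi\in\sobo^{1,p'}_0(\Omega;\R^n)$ solving $\div\psi=\phi-c_\phi\eta$ with
\[
\norm{\psi}_{\sobo^{1,p'}(\Omega)}\le c\,\norm{\phi-c_\phi\eta}_{\lebe^{p'}(\Omega)}\le c\,\norm{\phi}_{\lebe^{p'}(\Omega)}.
\]
Since $\psi$ has zero trace, integration by parts in the distributional pairing gives
\[
\langle f,\phi\rangle = c_\phi\,\langle f,\eta\rangle + \langle f,\div\psi\rangle = c_\phi\,\langle f,\eta\rangle - \langle \D f,\psi\rangle.
\]
Now $|c_\phi|\le |\Omega|^{1/p}\norm{\phi}_{\lebe^{p'}}$ and $|\langle f,\eta\rangle|\le \norm{f}_{\sobo^{-1,p}}\norm{\eta}_{\sobo^{1,p'}}$ with $\eta$ fixed, while $|\langle \D f,\psi\rangle|\le \norm{\D f}_{\sobo^{-1,p}}\norm{\psi}_{\sobo^{1,p'}}\le c\,\norm{\D f}_{\sobo^{-1,p}}\norm{\phi}_{\lebe^{p'}}$. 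Taking the supremum over admissible $\phi$ yields the claim.

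The main obstacle is precisely the Bogovski\u\i{} solvability estimate: existence of $\psi\in\sobo^{1,p'}_0(\Omega;\R^n)$ with $\div\psi=g$, $\norm{\psi}_{\sobo^{1,p'}}\le c\,\norm{g}_{\lebe^{p'}}$ for every mean-zero $g\in\lebe^{p'}(\Omega)$. On a star-shaped Lipschitz domain one has an explicit integral representation whose mapping properties follow from the Calder\'on--Zygmund theory of singular integrals, which is exactly the step that enforces the strict range $1<p'<\infty$, equivalently $1<p<\infty$. The passage from star-shaped to general bounded Lipschitz domains is then handled by a standard partition-of-unity argument. Assuming this classical ingredient, the rest of the proof is the short duality computation above.
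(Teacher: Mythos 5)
The paper does not prove this lemma; it cites it as \cite[Thm.~1]{Necas}, so there is no in-paper argument to compare against. Your duality-plus-Bogovski\u\i{} route is a well-known, legitimate alternative to Ne\v{c}as's original approach (which proceeds via extension, localisation and Fourier-multiplier estimates), and it is essentially sound. The decomposition $\phi = c_\phi\eta + (\phi - c_\phi\eta)$, the Bogovski\u\i{} solvability of $\div\psi = \phi - c_\phi\eta$ with the $\sobo^{1,p'}_0$-bound, and the resulting estimate on $\langle f,\phi\rangle$ are all in order, and the restriction $1<p<\infty$ enters precisely through the Calder\'on--Zygmund estimate for the Bogovski\u\i{} kernel, as you say.

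There is one point you should tighten. You take $\psi\in\sobo^{1,p'}_0(\Omega;\R^n)$ and write $\langle f,\div\psi\rangle = -\langle\D f,\psi\rangle$. The right-hand pairing is fine because $\D f\in\sobo^{-1,p}(\Omega)=(\sobo^{1,p'}_0(\Omega))^{*}$. But the left-hand pairing is a priori only meaningful when $\div\psi$ is a genuine test function, since at this stage $f$ is merely a distribution (we are still in the course of \emph{proving} that $f\in\lebe^p$); and the obvious density argument fails because $\div\psi_k\to\div\psi$ only in $\lebe^{p'}$, not in $\sobo^{1,p'}_0$, so one cannot pass to the limit in $\langle f,\div\psi_k\rangle$ using only $f\in\sobo^{-1,p}$. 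The clean fix is to observe that the Bogovski\u\i{} operator on a domain star-shaped with respect to a ball maps mean-zero functions in $\hold_c^\infty(\Omega)$ to vector fields in $\hold_c^\infty(\Omega;\R^n)$, so that for $\phi\in\hold_c^\infty(\Omega)$ one may take $\psi\in\hold_c^\infty(\Omega;\R^n)$ and the identity $\langle f,\div\psi\rangle=-\langle\D f,\psi\rangle$ is just the definition of the distributional derivative; the passage to general bounded Lipschitz $\Omega$ by a finite star-shaped covering and partition of unity preserves this smoothness. With that adjustment, your proof is complete. A secondary remark: the supremum formula $\norm{f}_{\lebe^p(\Omega)}=\sup\{\langle f,\phi\rangle:\phi\in\hold_c^\infty,\ \norm{\phi}_{\lebe^{p'}}\le 1\}$ should be understood with both sides valued in $[0,\infty]$; finiteness of the right-hand side (which your estimate establishes) is exactly what yields $f\in\lebe^p(\Omega)$, so the argument proves membership, not merely the norm bound, which is the correct reading of the lemma for $f\in\mathscr{D}'$.
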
 
Whereas the preceding lemma proves particularly useful in the context of bounded domains (also see Theorem~\ref{thm:domains} below), the situation on the entire $\R^{n}$ can be accessed by Calder\'{o}n-Zygmund estimates~\cite{CZ} (see~\cite[Prop.~4.1]{Va11} or~\cite[Prop.~4.1]{ContiGmeineder} for related arguments) and implies
\begin{lemma}[Calder\'{o}n-Zygmund-Korn]\label{lem:CZK}
Let $\A$ be a differential operator of the form~\eqref{eq:form} and let $1<p<\infty$. Then $\A$ is elliptic if and only if there exists a constant $c=c(p,n,\A)>0$ such that we have
\begin{align*}
\norm{\D u}_{\lebe^{p}(\R^{n})}\leq c\norm{\A u}_{\lebe^{p}(\R^{n})}\qquad\text{for all}\;u\in\hold_{c}^{\infty}(\R^{n};V).
\end{align*}
\end{lemma}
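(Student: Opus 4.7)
The plan is to prove both implications by Fourier-analytic arguments, essentially reducing to a Mihlin-type multiplier bound in one direction and to a plane-wave oscillation test in the other.

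For the forward direction (ellipticity $\Rightarrow$ inequality), I would work on the Fourier side. Since $\widehat{\A u}(\xi)=\imag\,\A[\xi]\hat u(\xi)$ and $\widehat{\D u}(\xi)=\imag\,\hat u(\xi)\otimes\xi$, it suffices to produce a bounded $L^p$-multiplier $m(\xi)$ such that $m(\xi)\A[\xi]v=v\otimes\xi$ for every $v\in V$ and every $\xi\neq 0$. Ellipticity says $\A[\xi]\colon V\to W$ is injective for $\xi\in\R^n\setminus\{0\}$, so the Moore-Penrose pseudoinverse $\A[\xi]^{\dagger}:=(\A[\xi]^{*}\A[\xi])^{-1}\A[\xi]^{*}$ is well-defined, and I define $m(\xi)w:=(\A[\xi]^{\dagger}w)\otimes\xi$. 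Since $\xi\mapsto\A[\xi]$ is linear, Cramer's rule represents $\A[\xi]^{\dagger}$ as a matrix of rational functions whose denominator $\det(\A[\xi]^{*}\A[\xi])$ is a homogeneous polynomial of degree $2\dim V$ that does not vanish off the origin; hence $m$ is smooth on $\R^{n}\setminus\{0\}$ and homogeneous of degree $0$. Homogeneity together with compactness of the unit sphere then yields the Mihlin bounds $|\partial^{\alpha}m(\xi)|\leq C_{\alpha}|\xi|^{-|\alpha|}$, so the Mihlin-H\"ormander theorem (the Calder\'{o}n-Zygmund input named in the statement) gives $\|\D u\|_{\lebe^{p}(\R^{n})}\leq c\|\A u\|_{\lebe^{p}(\R^{n})}$ for every $1<p<\infty$.

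For the converse, I would argue by contradiction with a standard oscillation construction. Suppose $\A$ fails to be elliptic. Then there exist $\xi_{0}\in\R^{n}\setminus\{0\}$ and $v_{0}\in V\setminus\{0\}$ with $\A[\xi_{0}]v_{0}=0$; here $v_{0}$ can be taken real because each $\A_{i}$ is real-linear, so the real and imaginary parts of any complex kernel vector lie in the real kernel. Fix $\varphi\in\hold_{c}^{\infty}(\R^{n})$ with $\varphi\not\equiv0$ and set
\begin{equation*}
u_{\lambda}(x):=\varphi(x)\,v_{0}\,\cos(\lambda\,\xi_{0}\cdot x),\qquad\lambda>0.
\end{equation*}
A direct computation using $\A[\xi_{0}]v_{0}=0$ shows
\begin{equation*}
\A u_{\lambda}(x)=\cos(\lambda\,\xi_{0}\cdot x)\sum_{i=1}^{n}(\partial_{i}\varphi)(x)\,\A_{i}v_{0},
\end{equation*}
so $\|\A u_{\lambda}\|_{\lebe^{p}(\R^{n})}$ is bounded uniformly in $\lambda$. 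On the other hand, $\D u_{\lambda}(x)=-\lambda\varphi(x)\sin(\lambda\,\xi_{0}\cdot x)\,v_{0}\otimes\xi_{0}+R_{\lambda}(x)$ with $\|R_{\lambda}\|_{\lebe^{p}}$ bounded, and since $v_{0}\otimes\xi_{0}\neq 0$ while $\int_{\R^{n}}|\sin(\lambda\,\xi_{0}\cdot x)|^{p}\varphi(x)^{p}\dif x$ converges to a strictly positive constant as $\lambda\to\infty$ (by weak-$*$ convergence of bounded periodic functions to their mean), one has $\|\D u_{\lambda}\|_{\lebe^{p}(\R^{n})}\gtrsim \lambda$, contradicting the assumed inequality for $\lambda$ large enough.

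The main obstacle is the forward direction, specifically justifying that the pseudoinverse multiplier $m$ meets the Mihlin hypotheses. This, however, reduces cleanly to the polynomial non-vanishing of $\det(\A[\xi]^{*}\A[\xi])$ off the origin and a homogeneity-and-compactness bookkeeping argument; no genuinely new ideas beyond classical Calder\'{o}n-Zygmund theory are required. The backward oscillation argument is routine and parallels the negative examples given earlier in the paper.
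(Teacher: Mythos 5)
Your proposal is correct and follows exactly the standard Calder\'{o}n-Zygmund/Mihlin multiplier route that the paper itself invokes (it gives no self-contained proof of Lemma~\ref{lem:CZK}, instead citing \cite[Prop.~4.1]{Va11} and \cite[Prop.~4.1]{ContiGmeineder}, both of which proceed by exactly this degree-zero homogeneous multiplier argument, combined with a plane-wave/oscillation argument for the converse). One cosmetic remark: since ellipticity in this paper is a real notion (injectivity of $\A[\xi]\colon V\to W$ for $\xi\in\R^n\setminus\{0\}$ with $V$ a real space), a real kernel vector $v_0$ exists automatically and the brief digression about extracting real and imaginary parts is unnecessary, though harmless.
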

\section{Sharp KMS-inequalities: The case $(p,n)=(1,2)$}\label{sec:n=2}
\subsection{Almost complementary parts}
Throughout this section, let $n=2$. The \emph{complementary part} of $P$ with respect to $\mathscr{A}\colon\R^{2\times 2}\to\R^{2\times 2}$ is given by $P-\mathscr{A}[P]$. In the present section we establish that, if $\mathscr{A}$ induces a $\mathbb{C}$-elliptic differential operator, then for a suitable linear map $\mathcal{L}\colon\R^{2\times 2}\to\R^{2\times 2}$ the image $(\mathrm{Id}-\mathcal{L}\mathscr{A}[\cdot])(\R^{2\times 2})$ is one-dimensional, in fact a line spanned by some invertible matrix. We then shall refer to $P-\mathcal{L}\mathscr{A}[P]$ as the \emph{almost complementary part}. 

To this end, note that if $\A u=\mathscr{A}[\D u]$ for all $u\in\hold_{c}^{\infty}(\R^{2};\R^{2})$ with some linear map $\mathscr{A}\colon\R^{2\times 2}\to\R^{2\times 2}$, then the algebraic relation 
\begin{align}\label{eq:eckartwitzigmann}
\mathscr{A}[\mathbf{e}\otimes\mathbf{f}]=\mathbf{e}\otimes_{\A}\mathbf{f}=\A[\mathbf{f}]\mathbf{e}\qquad\text{for all}\;\mathbf{e},\mathbf{f}\in\R^{2}
\end{align}
holds. Within this framework, we now have
\begin{proposition}\label{prop:dodgethedodo}
Suppose that $\A$ is $\mathbb{C}$-elliptic. Then there exists a linear map $\mathcal{L}\colon\R^{2\times 2}\to\R^{2\times 2}$ and some $\Q\in\mathrm{GL}(2)$ such that 
\begin{align}\label{eq:gordonramsey}
\{X-\mathcal{L}(\mathscr{A}[X])\mid X\in\R^{2\times 2}\}=\R \Q.
\end{align} 
\end{proposition}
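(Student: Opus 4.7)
The plan is to read off $\Q$ directly from $\ker\mathcal{A}$ and to construct $\mathcal{L}$ as a partial inverse of $\mathcal{A}$. Everything hinges on the algebraic claim that $\mathbb{C}$-ellipticity of $\A$ forces $\dim\ker\mathcal{A}\leq 1$, together with the observation that ordinary ellipticity then guarantees any nonzero element of $\ker\mathcal{A}$ is automatically invertible. This mirrors the archetypal situation $\mathcal{A}=\sym$ in $\R^{2\times 2}$, where $\ker\mathcal{A}=\mathfrak{so}(2)=\R J$ with $J=\bigl(\begin{smallmatrix}0&-1\\1&0\end{smallmatrix}\bigr)\in\mathrm{GL}(2)$.

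First I would establish the dimension bound by complexification. Extend $\mathcal{A}$ $\mathbb{C}$-linearly to a map $\mathcal{A}_{\mathbb{C}}\colon\mathbb{C}^{2\times 2}\to\mathbb{C}^{2\times 2}$; its kernel equals $\ker\mathcal{A}+\imag\ker\mathcal{A}$, so $\dim_{\mathbb{C}}\ker\mathcal{A}_{\mathbb{C}}=\dim_{\R}\ker\mathcal{A}$. Assume for contradiction that this dimension is at least $2$. Since $\det$ is a quadratic form on $\mathbb{C}^{2\times 2}$, its restriction to any $2$-dimensional complex subspace is a quadratic form in two complex variables and therefore has a nontrivial complex zero. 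This produces a nonzero $Z\in\ker\mathcal{A}_{\mathbb{C}}$ with $\det Z=0$, hence $Z=a\otimes\xi$ for some $a,\xi\in\mathbb{C}^{2}\setminus\{0\}$. The complex-linear extension of~\eqref{eq:eckartwitzigmann} then gives $\A[\xi]a=\mathcal{A}_{\mathbb{C}}[a\otimes\xi]=0$, contradicting $\mathbb{C}$-ellipticity. Moreover, in the case $\dim\ker\mathcal{A}=1$, real ellipticity of $\A$ rules out any nonzero rank-one matrix lying in $\ker\mathcal{A}$ (otherwise $\mathcal{A}[a\otimes b]=\A[b]a=0$ with $b\neq 0$ would violate injectivity of $\A[b]$), so a generator of $\ker\mathcal{A}$ is necessarily in $\mathrm{GL}(2)$.

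Next I would construct $\mathcal{L}$. Suppose first $\dim\ker\mathcal{A}=1$ and set $\ker\mathcal{A}=\R\Q$ with $\Q\in\mathrm{GL}(2)$. Pick any three-dimensional linear complement $C$ of $\R\Q$ in $\R^{2\times 2}$; the restriction $\mathcal{A}|_{C}\colon C\to\mathcal{A}(\R^{2\times 2})$ is then a linear isomorphism. I define $\mathcal{L}$ on $\mathcal{A}(\R^{2\times 2})$ as the inverse of $\mathcal{A}|_{C}$ and extend it by zero on any linear complement of $\mathcal{A}(\R^{2\times 2})$. Writing $X=\alpha\Q+X_{C}$ with $\alpha\in\R$ and $X_{C}\in C$, one computes $\mathcal{L}(\mathcal{A}[X])=\mathcal{L}(\mathcal{A}[X_{C}])=X_{C}$, whence $X-\mathcal{L}(\mathcal{A}[X])=\alpha\Q$, so the image of $\mathrm{Id}-\mathcal{L}\mathcal{A}$ is exactly $\R\Q$, as required. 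In the remaining case $\ker\mathcal{A}=\{0\}$, $\mathcal{A}$ is bijective; one may then pick any $\Q\in\mathrm{GL}(2)$ and any nonzero linear functional $\phi$ on $\R^{2\times 2}$ and set $\mathcal{L}[Y]\coloneqq\mathcal{A}^{-1}[Y]-\phi(Y)\Q$, which yields $X-\mathcal{L}(\mathcal{A}[X])=\phi(\mathcal{A}[X])\Q$ with image $\R\Q$.

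The main obstacle is the dimension bound in the first step, that is, translating the (a priori complex-analytic) condition of $\mathbb{C}$-ellipticity into the structural bound $\dim\ker\mathcal{A}\leq 1$. The crucial input there is that $\det$ is a quadratic form, so any two-dimensional complex subspace of $\mathbb{C}^{2\times 2}$ meets $\{\det=0\}$ in more than $\{0\}$ and thus contains a nonzero rank-one matrix. Once this is in place, the construction of $\mathcal{L}$ reduces to elementary linear algebra: fix a complement of $\ker\mathcal{A}$ and invert $\mathcal{A}$ on its image.
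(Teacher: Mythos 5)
Your proof is correct, and it takes a genuinely different and arguably more transparent route than the paper's. The paper works with the image: it cases on $\dim(\mathcal{A}[\R^{2\times 2}])$, rules out the value $2$ via the ellipticity-plus-cancellation characterisation of $\mathbb{C}$-ellipticity (Lemma~\ref{lem:helpfulDiffOp}), and then, in the codimension-one case $\dim=3$, writes down a candidate $\Q$ from the coefficients of a linear relation among the pure $\A$-tensors $\mathbf{e}_i\otimes_\A\mathbf{e}_j$ and verifies $\Q\in\mathrm{GL}(2)$ by an explicit four-way case analysis on those coefficients. You work with the kernel instead: you show $\dim_{\R}\ker\mathcal{A}\leq 1$ by complexifying and observing that $\det$, being a quadratic form, must vanish nontrivially on any $2$-dimensional complex subspace of $\mathbb{C}^{2\times 2}$, producing a rank-one $Z=a\otimes\xi\in\ker\mathcal{A}_{\mathbb{C}}$ and hence $\A[\xi]a=0$, contradicting $\mathbb{C}$-ellipticity. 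The passage $\mathcal{A}_{\mathbb{C}}[a\otimes\xi]=\A[\xi]a$ is legitimate: both sides are the unique $\mathbb{C}$-bilinear extensions of the real bilinear identity~\eqref{eq:eckartwitzigmann}. Your remark that real ellipticity alone forces any nonzero $\Q\in\ker\mathcal{A}$ to have rank $2$ is a clean shortcut that entirely replaces the paper's case analysis for invertibility of $\Q$, and your construction of $\mathcal{L}$ as the inverse of $\mathcal{A}$ restricted to a complement of $\ker\mathcal{A}$ (extended by zero, or corrected by a rank-one term when $\mathcal{A}$ is bijective) is elementary linear algebra once the dimension bound is in place. Note in passing that your $\Q$ and the paper's agree up to scaling, since the paper's $\Q$ from~\eqref{eq:defQcritical} is readily seen to lie in $\ker\mathcal{A}$ by~\eqref{eq:idea}. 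What your approach buys is conceptual economy (no case-by-case coefficient inspection) and a direct use of $\mathbb{C}$-ellipticity without passing through cancellation; what the paper's approach buys is that it stays entirely over $\R$ and exhibits the coefficients $\gamma_{ij}$ explicitly, which is convenient for the way $\mathcal{L}$ is subsequently used via~\eqref{eq:frankrosin}.
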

\begin{proof}
Let $\{\mathbf{e}_{1},\mathbf{e}_{2}\}$ be a basis of $\R^{2}$. As $\{\mathbf{e}_{i}\otimes\mathbf{e}_{j}\mid (i,j)\in\{1,2\}\times\{1,2\}\}$ spans $\R^{2\times 2}$, it suffices to show that there exist a linear map $\mathcal{L}\colon\R^{2\times 2}\to\R^{2\times 2}$, $\Q\in\mathrm{GL}(2)$ and numbers $\gamma_{ij}\in\R$ not all equal to zero such that 
\begin{align}\label{eq:frankrosin}
(\mathbf{e}_{i}\otimes\mathbf{e}_{j})-\mathcal{L}(\mathbf{e}_{i}\otimes_{\A}\mathbf{e}_{j})=\gamma_{ij}\Q\qquad\text{for all}\;i,j\in\{1,2\}. 
\end{align}
If we can establish~\eqref{eq:frankrosin}, we express $X\in\R^{2\times 2}$ as 
\begin{align*}
X=\sum_{1\leq i,j\leq 2}x_{ij}\mathbf{e}_{i}\otimes\mathbf{e}_{j} 
\end{align*}
and use~\eqref{eq:frankrosin} in conjunction with~\eqref{eq:eckartwitzigmann} to find 
\begin{align}
X-\mathcal{L}(\mathscr{A}[X]) & = \sum_{1\leq i,j\leq 2}x_{ij}\big((\mathbf{e}_{i}\otimes\mathbf{e}_{j})-\mathcal{L}(\mathbf{e}_{i}\otimes_{\A}\mathbf{e}_{j})\big) = \Big(\sum_{1\leq i,j\leq 2}\gamma_{ij}x_{ij}\Big)\Q. 
\end{align}
Since not all $\gamma_{ij}$'s equal zero, this gives~\eqref{eq:gordonramsey}. 

Let us therefore establish~\eqref{eq:frankrosin}. We approach~\eqref{eq:frankrosin} by distinguishing several dimensional {cases}. First note that $\mathscr{A}[\R^{2\times 2}]$ is at least two dimensional. In fact, 
\begin{align}\label{eq:rosinsrestaurant}
 \mathbb{A}[\mathbf{e}_{1}](\R^{2})\subset \mathscr{A}[\R^{2\times 2}], 
\end{align}
and since $\mathbb{A}$ is elliptic, $\ker(\A[\mathbf{e}_{1}])=\{0\}$, whence $\dim(\mathbb{A}[\mathbf{e}_{1}](\R^{2}))=2$ by the rank-nullity theorem. Now, if $\dim(\mathscr{A}[\R^{2\times 2}])=2$, we conclude from~\eqref{eq:rosinsrestaurant} that $\mathscr{A}[\R^{2\times 2}]=\A[\mathbf{e}_{1}](\R^{2})$. We conclude that for all $\xi\in\R^{2}\backslash\{0\}$ we have $\A[\xi](\R^{2})=\A[\mathbf{e}_{1}](\R^{2})$ and hence $\A$ fails to be cancelling. Since ellipticity together with cancellation is equivalent to $\mathbb{C}$-ellipticity for first order operators in $n=2$ dimensions by Lemma~\ref{lem:helpfulDiffOp}, we infer that $\dim(\mathscr{A}[\R^{2\times 2}])\in\{3,4\}$. 

If $\dim(\mathscr{A}[\R^{2\times 2}])=4$, we note that $\{\mathbf{e}_{i}\otimes_{\mathbb{A}}\mathbf{e}_{j}\mid 1\leq i,j\leq 2\}$ spans $\mathscr{A}[\R^{2\times 2}]$. Consequently, the vectors $\mathbf{e}_{i}\otimes_{\A}\mathbf{e}_{j}$, $1\leq i,j\leq 2$, form a basis of $\mathscr{A}[\R^{2\times 2}]$. For any given $\Q\in\mathrm{GL}(2)$ we may simply declare $\mathcal{L}$ by its action on these basis vectors via 
\begin{align}\label{eq:michelin}
\mathcal{L}(\mathbf{e}_{i}\otimes_{\A}\mathbf{e}_{j})\coloneqq (\mathbf{e}_{i}\otimes\mathbf{e}_{j})-\Q. 
\end{align}
Then~\eqref{eq:frankrosin} is fulfilled with $\gamma_{ij}=1$ for all $(i,j)\in\{1,2\}\times\{1,2\}$, and we conclude in this case. 

It remains to discuss the case $\dim(\mathscr{A}[\R^{2\times 2}])=3$. Then there exists $(i_{0},j_{0})\in\{1,2\}\times\{1,2\}$ such that 
\begin{align}\label{eq:idea}
\mathbf{e}_{i_{0}}\otimes_{\A}\mathbf{e}_{j_{0}} = \sum_{(i,j)\neq(i_{0},j_{0})}a_{ij}\mathbf{e}_{i}\otimes_{\A}\mathbf{e}_{j} 
\end{align}
for some $a_{ij}\in\R$ not all equal to zero, $(i,j)\neq (i_{0},j_{0})$. In what follows, we assume that $(i_{0},j_{0})=(2,1)$; for the other index {combinations}, the argument is analogous. Our objective is to show that 
\begin{align}\label{eq:defQcritical}
\Q\coloneqq \Big(\begin{matrix} -a_{11} & -a_{12} \\ 1 & -a_{22}\end{matrix}\Big)
\end{align}
is invertible. By assumption, not all coefficients $a_{11},a_{12},a_{22}$ vanish. We distinguish four options: 
\begin{itemize}
\item[(i)] If $a_{22}=0$, $a_{12}\neq 0$, then the matrix $\Q$ from~\eqref{eq:defQcritical} is invertible. 
\item[(ii)] If $a_{22}\neq 0$, $a_{12}=0$, then necessarily $a_{11}\neq 0$ and so $\Q$ is invertible. Indeed, if $a_{11}=0$, then~\eqref{eq:eckartwitzigmann} and~\eqref{eq:idea} yield $\mathbb{A}[\mathbf{e}_{1}]\mathbf{e}_{2}=a_{22}\mathbb{A}[\mathbf{e}_{2}]\mathbf{e}_{2}$. Since $\A$ is of first order and homogeneous, the map $\xi\mapsto\A[\xi]v$ is linear for any fixed $v$. We conclude that $\mathbb{A}[\mathbf{e}_{1}-a_{22}\mathbf{e}_{2}]\mathbf{e}_{2}=0$, and since $\mathbf{e}_{1}-a_{22}\mathbf{e}_{2}\neq 0$ independently of $a_{22}$, this is impossible by ellipticity of $\A$. 
\item[(iii)] If $a_{12}=a_{22}=0$, then~\eqref{eq:eckartwitzigmann} and ~\eqref{eq:idea} yield $\A[\mathbf{e}_{1}]\mathbf{e}_{2}=a_{11}\A[\mathbf{e}_{1}]\mathbf{e}_{1}$. We then conclude that $\mathbb{A}[\mathbf{e}_{1}](\mathbf{e}_{2}-a_{11}e_{1})=0$. But $\mathbf{e}_{2}-a_{11}\mathbf{e}_{1}\neq 0$ independently of $a_{11}$, this is impossible by ellipticity of $\A$. 
\item[(iv)] If $a_{22},a_{12}\neq 0$, then non-invertibility of the matrix is equivalent to a constant $\lambda\neq 0$ such that 
\begin{align}\label{eq:dax}
-a_{11}=-\lambda a_{12},\;\;\;1=-\lambda a_{22}. 
\end{align}
Again using linearity of $\xi\mapsto \A[\xi]v$ for any fixed $v$, ~\eqref{eq:idea} becomes 
\begin{align*}
-\lambda a_{22}\A[\mathbf{e}_{1}]\mathbf{e}_{2} & = a_{11}\mathbb{A}[\mathbf{e}_{1}]\mathbf{e}_{1} + a_{12}\mathbb{A}[\mathbf{e}_{2}]\mathbf{e}_{1} + a_{22}\mathbb{A}[\mathbf{e}_{2}]\mathbf{e}_{2} \\ 
& \!\!\!\stackrel{\eqref{eq:dax}}{=} \lambda a_{12}\mathbb{A}[\mathbf{e}_{1}]\mathbf{e}_{1} + a_{12}\mathbb{A}[\mathbf{e}_{2}]\mathbf{e}_{1} + a_{22}\mathbb{A}[\mathbf{e}_{2}]\mathbf{e}_{2}\\ 
& = \lambda a_{12}\mathbb{A}[\mathbf{e}_{1}]\mathbf{e}_{1} + \mathbb{A}[\mathbf{e}_{2}](a_{12}\mathbf{e}_{1}+a_{22}\mathbf{e}_{2}), 
\end{align*}
whereby 
\begin{align*}
\mathbb{A}[-\lambda\mathbf{e}_{1}](a_{12}\mathbf{e}_{1}+a_{22}\mathbf{e}_{2})=\mathbb{A}[\mathbf{e}_{2}](a_{12}\mathbf{e}_{1}+a_{22}\mathbf{e}_{2}), 
\end{align*}
so that
\begin{align}\label{eq:conan}
\mathbb{A}[\lambda\mathbf{e}_{1}+\mathbf{e}_{2}](a_{12}\mathbf{e}_{1}+a_{22}\mathbf{e}_{2})=0.
\end{align}
By assumption, $a_{12},a_{22}\neq 0$ and hence $a_{12}\mathbf{e}_{1}+a_{22}\mathbf{e}_{2}\neq 0$. But $\lambda\mathbf{e}_{1}+\mathbf{e}_{2}\neq 0$ independently of $\lambda$, and hence~\eqref{eq:conan} is at variance with the  ellipticity of $\A$. 
\end{itemize}  
In conclusion, $\Q$ defined by~\eqref{eq:defQcritical} satisfies $\Q\in\mathrm{GL}(2)$ and moreover 
\begin{align}\label{eq:kogoro}
\Q = \mathbf{e}_{i_{0}}\otimes\mathbf{e}_{j_{0}} - \sum_{(i,j)\neq (i_{0},j_{0})} a_{ij}\mathbf{e}_{i}\otimes\mathbf{e}_{j}, 
\end{align}
where we recall that $(i_{0},j_{0})=(2,1)$. By assumption and because of~\eqref{eq:idea}, $\mathbf{e}_{1}\otimes_{\A}\mathbf{e}_{1},\mathbf{e}_{1}\otimes_{\A}\mathbf{e}_{2}$ and $\mathbf{e}_{2}\otimes_{\A}\mathbf{e}_{2}$ form a basis of $\mathscr{A}[\R^{2\times 2}]$, and we may extend these three basis vectors by a vector $\mathbf{f}\in\R^{2\times 2}$ to a basis of $\R^{2\times 2}$. We then declare the linear map $\mathcal{L}$ by its action on these basis vectors via 
\begin{align}\label{eq:plachutta}
\begin{split}
&\mathcal{L}(\mathbf{e}_{i}\otimes_{\A}\mathbf{e}_{j})\coloneqq \mathbf{e}_{i}\otimes\mathbf{e}_{j}\;\;\;\;\;\;\;\;\text{if}\;(i,j)\neq (i_{0},j_{0}),\\
&\mathcal{L}(\mathbf{f})\coloneqq \mathbf{0}\in\R^{2\times 2}. 
\end{split}
\end{align}
Combining our above findings, we obtain 
\begin{align*}
\mathcal{L}(\mathbf{e}_{i_{0}}\otimes_{\A}\mathbf{e}_{j_{0}}) & \stackrel{\eqref{eq:idea}}{=} \sum_{(i,j)\neq(i_{0},j_{0})}a_{ij}\mathcal{L}(\mathbf{e}_{i}\otimes_{\A}\mathbf{e}_{j}) \stackrel{\eqref{eq:plachutta}_{1}}{=} \sum_{(i,j)\neq(i_{0},j_{0})}a_{ij}\mathbf{e}_{i}\otimes\mathbf{e}_{j} \\
& \;= \mathbf{e}_{i_{0}}\otimes\mathbf{e}_{j_{0}} - \Big( \mathbf{e}_{i_{0}}\otimes\mathbf{e}_{j_{0}}-\sum_{(i,j)\neq(i_{0},j_{0})}a_{ij}\mathbf{e}_{i}\otimes\mathbf{e}_{j} \Big)\\ 
&\!\!\stackrel{\eqref{eq:kogoro}}{=} \mathbf{e}_{i_{0}}\otimes\mathbf{e}_{j_{0}} - \Q, 
\end{align*}
and so, in light of~\eqref{eq:plachutta}, we may choose 
\begin{align*}
\gamma_{ij}=\begin{cases} 0&\;\text{if}\;(i,j)\neq (i_{0},j_{0}),\\
1&\;\text{if}\;(i,j)=(i_{0},j_{0})
\end{cases} 
\end{align*}
to see that~\eqref{eq:frankrosin} is fulfilled; in particular, not all $\gamma_{ij}$'s vanish, and $\Q\in\mathrm{GL}(2)$. The proof is complete. 
\end{proof}
The next lemma shows that in two dimensions the operator $\dev\sym$ is indeed a typical example of an elliptic but not $\mathbb{C}$-elliptic operator:
\begin{lemma}[Description of first order $\mathbb{C}$-elliptic operators in two dimensions]\label{lem:characterC} Suppose that $\mathbb{A}$ is elliptic. Then the operator $\mathbb{A}$ given by $\A u:=\mathscr{A}[\D u]$ for $u\colon\R^{2}\to\R^{2}$ is $\mathbb{C}$-elliptic if and only if $\dim (\mathscr{A}[\R^{2\times2}]) \in \{3,4\}$. 
\end{lemma}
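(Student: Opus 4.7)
The plan is to handle the two implications separately, with the harder direction using a rank argument on the relations among the pure $\A$-tensors.

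For the ``only if'' direction, I would essentially invoke the case analysis already carried out inside the proof of Proposition~\ref{prop:dodgethedodo}: since $\A$ is elliptic on $\R^{2}$, $\A[\mathbf{e}_{1}](\R^{2})\subseteq \mathcal{A}[\R^{2\times2}]$ has real dimension $2$, so $\dim(\mathcal{A}[\R^{2\times2}])\geq 2$. If equality held, then $\A[\xi](\R^{2})=\A[\mathbf{e}_{1}](\R^{2})=\mathcal{A}[\R^{2\times2}]$ for every $\xi\in\R^{2}\setminus\{0\}$, violating the cancellation property. By Lemma~\ref{lem:helpfulDiffOp}, cancellation together with ellipticity is equivalent to $\mathbb{C}$-ellipticity in $n=2$, hence $\A$ would fail to be $\mathbb{C}$-elliptic.

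For the ``if'' direction, the idea is to prove $\mathbb{C}$-ellipticity directly by showing that $\A[\xi]v=0$ forces $v=0$ or $\xi=0$ for $v,\xi\in\mathbb{C}^{2}$. Expanding in the standard basis and using $\mathcal{A}[\mathbf{e}_{i}\otimes\mathbf{e}_{j}]=\mathbf{e}_{i}\otimes_{\A}\mathbf{e}_{j}$, one has
\begin{align*}
\A[\xi]v = \sum_{1\leq i,j\leq 2}v_{i}\xi_{j}\,(\mathbf{e}_{i}\otimes_{\A}\mathbf{e}_{j}).
\end{align*}
In the case $\dim(\mathcal{A}[\R^{2\times2}])=4$, the four tensors $\mathbf{e}_{i}\otimes_{\A}\mathbf{e}_{j}$ are linearly independent, so $v_{i}\xi_{j}=0$ for every $i,j$, which immediately gives $v=0$ or $\xi=0$.

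The main obstacle, and the core of the proof, is the case $\dim(\mathcal{A}[\R^{2\times2}])=3$. Here the complex kernel of the coefficient map $(a_{ij})\mapsto\sum a_{ij}\,(\mathbf{e}_{i}\otimes_{\A}\mathbf{e}_{j})$ is one-dimensional and spanned by a single real relation $\sum c_{ij}\,(\mathbf{e}_{i}\otimes_{\A}\mathbf{e}_{j})=0$. The key algebraic claim I would establish is that the matrix $(c_{ij})\in\R^{2\times 2}$ necessarily has rank $2$: if instead $c_{ij}=\alpha_{i}\beta_{j}$ for real $\alpha,\beta\neq 0$, then the relation reads $\alpha\otimes_{\A}\beta=\A[\beta]\alpha=0$ with $\beta\in\R^{2}\setminus\{0\}$, contradicting ellipticity of $\A$. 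Granted this, vanishing of $\A[\xi]v$ for complex $v,\xi$ forces $(v_{i}\xi_{j})=\lambda(c_{ij})$ for some $\lambda\in\mathbb{C}$; but $(v_{i}\xi_{j})$ is an outer product hence has complex rank at most $1$, while $\lambda(c_{ij})$ has rank $2$ unless $\lambda=0$. Thus $\lambda=0$, whence $v_{i}\xi_{j}=0$ for all $i,j$ and therefore $v=0$ or $\xi=0$, giving $\mathbb{C}$-ellipticity. The subtle point is recognising that the real rank of the coefficient vector of the relation must be maximal precisely because pure $\A$-tensors cannot degenerate under ellipticity; once this is in place the passage from real to complex rank is automatic since both coincide for $2\times 2$ matrices.
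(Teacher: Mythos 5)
Your proof is correct, and in the crucial case $\dim(\mathcal{A}[\R^{2\times 2}])=3$ it takes a genuinely cleaner route than the paper. The ``only if'' direction and the $\dim=4$ case match the paper essentially verbatim. In the $\dim=3$ case, the paper writes a normalised relation $\Atimes{\eI{2}}{\eI{2}}=\alpha\Atimes{\eI{1}}{\eI{1}}+\beta\Atimes{\eI{1}}{\eI{2}}+\gamma\Atimes{\eI{2}}{\eI{1}}$, imports the invertibility condition $\alpha+\beta\gamma\neq 0$ from the four-way case distinction inside Proposition~\ref{prop:dodgethedodo}, and then expands $\Atimes{a}{b}=0$ into a system of six real (then three complex) equations which it solves by a further subcase analysis on whether $b_{2}$ and $a_{2}$ vanish. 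Your argument instead observes directly that the nonzero real relation matrix $(c_{ij})$ cannot have rank one---since a rank-one factorisation $c_{ij}=\alpha_{i}\beta_{j}$ would produce a vanishing pure $\A$-tensor $\A[\beta]\alpha=0$ with $\beta\neq 0$, contradicting ellipticity---and then exploits the trivial rank bound: the outer product $(v_{i}\xi_{j})$ has complex rank at most $1$, whereas any nonzero scalar multiple of $(c_{ij})$ has complex rank $2$, forcing $\lambda=0$ and hence $v_{i}\xi_{j}\equiv 0$. This is the same algebraic invariant the paper uses (indeed $(c_{ij})$ is precisely the matrix $\Q$ from \eqref{eq:defQcritical}, so ``rank $2$'' is ``$\Q\in\mathrm{GL}(2)$''), but your derivation of the rank-$2$ property is more conceptual than the paper's coordinate-wise case check, and your conclusion from it avoids solving the system entirely. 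The only step you state without explicit justification---that the complex kernel of the complexified coefficient map is $\mathbb{C}\cdot(c_{ij})$---is the standard fact that complexification commutes with taking kernels, so this is not a gap.
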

\begin{proof}
 The sufficiency part is already contained in the proof of the previous Proposition \ref{prop:dodgethedodo}. For the necessity part consider for $a,b\in\mathbb{C}^2$ with $a=(a_1,a_2)^\top$, $b=(b_1,b_2)^\top$:
\begin{align*}
 \Atimes{a}{b}=&\ \Re(a_1b_1)\Atimes{\eI{1}}{\eI{1}}+\Re(a_1b_2)\Atimes{\eI{1}}{\eI{2}}\\ &+\Re(a_2b_1)\Atimes{\eI{2}}{\eI{1}}+\Re(a_2b_2)\Atimes{\eI{2}}{\eI{2}} \\
 &+\imag\cdot\big[\Im(a_1b_1)\Atimes{\eI{1}}{\eI{1}}+\Im(a_1b_2)\Atimes{\eI{1}}{\eI{2}}\\
 &\qquad +\Im(a_2b_1)\Atimes{\eI{2}}{\eI{1}}+\Im(a_2b_2)\Atimes{\eI{2}}{\eI{2}}\big].
\end{align*}
If $\dim (\mathscr{A}[\R^{2\times2}])=4$, then all $\Atimes{\eI{i}}{\eI{j}}$ are linearly independent over $\R$, and the condition $\Atimes{a}{b}=0$ implies
\begin{align*}
 \Re(a_1b_1)=\Re(a_1b_2) =\Re(a_2b_1)=\Re(a_2b_2)=0 \\
 \shortintertext{and}
 \Im(a_1b_1)=\Im(a_1b_2) =\Im(a_2b_1)=\Im(a_2b_2)=0.
\end{align*}
Thus, with $a_i,b_i\in\mathbb{C}$ we have
\begin{align*}
 a_1b_1=a_1b_2=a_2b_1=a_2b_2=0.
\end{align*}
And for $b\neq0$ we deduce $a=0$, meaning that the operator $\mathbb{A}$ is $\mathbb{C}$-elliptic in this case.\bigskip

In the case $\dim (\mathscr{A}[\R^{2\times2}])=3$ we assume without loss of generality, that there exist $\alpha,\beta,\gamma\in\R$ such that
\begin{align}
 \Atimes{\eI{2}}{\eI{2}}=\alpha\Atimes{\eI{1}}{\eI{1}}+\beta\Atimes{\eI{1}}{\eI{2}}+\gamma\Atimes{\eI{2}}{\eI{1}}.
\end{align}
It follows from the ellipticity assumption on $\mathbb{A}$ (cf. the invertibility of $\Q$ from \eqref{eq:defQcritical} in the proof of Proposition \ref{prop:dodgethedodo}) that these coefficients satisfy
\begin{equation}
 \alpha(-1)-\beta\gamma\neq0  \quad \Leftrightarrow\quad \alpha+\beta\gamma\neq0.
\end{equation}
The condition $\Atimes{a}{b}=0$ with $a,b\in\mathbb{C}$ then implies
\begin{align*}
 0 &=\Re(a_1b_1)+\alpha\,\Re(a_2b_2) \overset{\alpha\in\R}{=}\Re(a_1b_1+\alpha\, a_2b_2),\\
 0 &=\Re(a_1b_2)+\beta\,\Re(a_2b_2) \overset{\beta\in\R}{=}\Re((a_1+\beta\, a_2)b_2),\\
 0 &=\Re(a_2b_1)+\gamma\,\Re(a_2b_2) \overset{\gamma\in\R}{=}\Re(a_2(b_1+\gamma\, b_2)),\\
 0 &=\Im(a_1b_1)+\alpha\,\Im(a_2b_2) \overset{\alpha\in\R}{=}\Im(a_1b_1+\alpha\, a_2b_2),\\
 0 &=\Im(a_1b_2)+\beta\,\Im(a_2b_2) \overset{\beta\in\R}{=}\Im((a_1+\beta\, a_2)b_2),\\
 0 &=\Im(a_2b_1)+\gamma\,\Im(a_2b_2) \overset{\gamma\in\R}{=}\Im(a_2(b_1+\gamma\, b_2)).
\end{align*}
Hence, $a_i,b_i\in\mathbb{C}$ satisfy
\begin{align}\label{eq:cases}
 \begin{cases}
  a_1b_1+\alpha\, a_2b_2  &= 0,\\
  (a_1+\beta\, a_2)b_2 &= 0 ,\\
  a_2(b_1+\gamma\, b_2) &=0.
 \end{cases}
\end{align}
If $b_2=0$, then by $b\neq0$ we must have $b_1\neq0$ and we obtain $a_1=a_2=0$,  meaning that the operator $\mathbb{A}$ is $\mathbb{C}$-elliptic. Otherwise, with $b_2\neq0$ we have $a_1+\beta\,a_2=0$. If $a_2=0$ then we are done. We show, that the case $a_2\neq0$ cannot occur. Indeed, if $a_2\neq0$ then \eqref{eq:cases} yields:
\begin{align*}
 \begin{cases}
  b_1+\gamma\, b_2 &= 0\\
  -\beta\,b_1+\alpha\,b_2&=0
 \end{cases} \quad \Leftrightarrow\quad \begin{pmatrix} 1 & \gamma \\ -\beta & \alpha \end{pmatrix} b = 0
\end{align*}
which cannot be fulfilled since $b\neq0$ and $\alpha+\beta\gamma\neq0$.
\end{proof}
Let us now be more precise and explain in detail where the proof of Proposition \ref{prop:dodgethedodo} fails if $\A$ is elliptic but \textbf{not} $\mathbb{C}$-elliptic. Ellipticity implies that for any $\xi\in\R^{2}$, $\dim(\A[\xi](\R^{2}))=2$ and by the foregoing lemma we must have $\dim(\mathscr{A}[\R^{2\times 2}])=2$. Hence, if $\A$ is not $\mathbb{C}$-elliptic, then precisely two pure $\A$-tensors $\mathbf{e}_{i}\otimes_{\A}\mathbf{e}_{j}$, $(i,j)\in\mathcal{I}\coloneqq \{(i_{0},j_{0}),(i_{1},j_{1})\}$,  are linearly independent. In order to introduce the linear map $\mathcal{L}$, we define it via its action on these basis vectors. At a first glance, this seems to only yield two conditions, but this is not so because of~\eqref{eq:frankrosin} as we here pose compatibility conditions for \emph{all} indices that ought to be fulfilled. In particular, the definition of $\mathcal{L}$ on the fixed basis vectors must be compatible with~\eqref{eq:frankrosin} also for $(i,j)\notin\mathcal{I}$, and since~\eqref{eq:frankrosin} includes all rank-one-matrices, this is non-trivial. As can be seen explicitly from Example~\ref{ex:tensorspanTFsymgradn=2} below, non-$\mathbb{C}$-elliptic operators do not satisfy this property. 
\begin{example}[Gradient and symmetric gradient]
For the ($\mathbb{C}$-elliptic) gradient, the set $\{\mathbf{e}_{i}\otimes_{\A}\mathbf{e}_{j}\}$ is just 
\begin{align*}
\left\{\begin{pmatrix} 1 & 0 \\ 0 & 0 \end{pmatrix},\;\begin{pmatrix} 0 & 1 \\ 0 & 0 \end{pmatrix},\;\begin{pmatrix} 0 & 0 \\ 1 & 0 \end{pmatrix},\;\begin{pmatrix} 0 & 0 \\ 0 & 1 \end{pmatrix}\right\}
\end{align*}
and so the gradient falls into the case $\dim(\mathscr{A}[\R^{2\times 2}])=4$ in the case distinction of the above proof. For the ($\mathbb{C}$-elliptic) deviatoric or symmetric gradients, the set $\{\mathbf{e}_{i}\otimes_{\A}\mathbf{e}_{j}\}$ is just 
\begin{align*}
& \left\{\begin{pmatrix} \frac{1}{2} & 0 \\ 0 & -\frac{1}{2} \end{pmatrix},\;\begin{pmatrix} 0 & 1 \\ 0 & 0 \end{pmatrix},\;\begin{pmatrix} 0 & 0 \\ 1 & 0 \end{pmatrix},\;\begin{pmatrix} -\frac{1}{2} & 0 \\ 0 & \frac{1}{2} \end{pmatrix}\right\} &\text{(deviatoric gradient)},\\
&\left\{\begin{pmatrix} 1 & 0 \\ 0 & 0 \end{pmatrix},\;\begin{pmatrix} 0 & \frac{1}{2} \\ \frac{1}{2} & 0 \end{pmatrix},\;\begin{pmatrix} 0 & 0 \\ 0 & 1 \end{pmatrix}\right\} &\text{(symmetric gradient)},
\end{align*}
and so both the deviatoric and the symmetric gradient falls into the case $\dim(\mathscr{A}[\R^{2\times 2}])=3$ in the case distinction of the above proof. 
\end{example}
\begin{example}[Trace-free symmetric gradient]\label{ex:tensorspanTFsymgradn=2}
For the (non-$\mathbb{C}$-elliptic) trace-free symmetric gradient, the set $\{\mathbf{e}_{i}\otimes_{\A}\mathbf{e}_{j}\}$ is given by 
\begin{align*}
\left\{\begin{pmatrix} \frac{1}{2} & 0 \\ 0 & -\frac{1}{2} \end{pmatrix},\;\begin{pmatrix} 0 & \frac{1}{2} \\ \frac{1}{2} & 0 \end{pmatrix},\;\begin{pmatrix} -\frac{1}{2} & 0 \\ 0 & \frac{1}{2} \end{pmatrix}\right\}=:\{A,B,C\}, 
\end{align*}
and so the first and the third element are linearly dependent. We now verify explicitly that the relation~\eqref{eq:frankrosin} cannot be achieved in this case for some $\Q\in\mathrm{GL}(2)$. Suppose towards a contradiction that~\eqref{eq:frankrosin} can be achieved. Then 
\begin{align}\label{eq:hanshaas1}
\begin{split}
\begin{pmatrix} 0 & 0 \\ 0 & 1 \end{pmatrix} - \gamma_{22}\Q   & \stackrel{\eqref{eq:frankrosin}}{=} \mathcal{L}(\mathbf{e}_{2}\otimes_{\A}\mathbf{e}_{2}) \stackrel{A=-C}{=} -\mathcal{L}(\mathbf{e}_{1}\otimes_{\A}\mathbf{e}_{1})   \stackrel{\eqref{eq:frankrosin}}{=} \gamma_{11}\Q - \begin{pmatrix} 1 & 0 \\ 0 & 0 \end{pmatrix}
\end{split}
\end{align}
and 
\begin{align}\label{eq:hanshaas2}
\begin{split}
\begin{pmatrix} 0 & 1 \\ 0 & 0 \end{pmatrix} - \gamma_{12}\Q   & \stackrel{\eqref{eq:frankrosin}}{=} \mathcal{L}(\mathbf{e}_{1}\otimes_{\A}\mathbf{e}_{2}) = \mathcal{L}(\mathbf{e}_{2}\otimes_{\A}\mathbf{e}_{1})   \stackrel{\eqref{eq:frankrosin}}{=} \begin{pmatrix} 0 & 0 \\ 1 & 0 \end{pmatrix} - \gamma_{21}\Q.
\end{split}
\end{align}
From~\eqref{eq:hanshaas1} we infer that $\Q$ needs to be a multiple of the identity matrix, whereas we infer from~\eqref{eq:hanshaas2} that $\Q$ is contained in the skew-symmetric matrices, and this is impossible. 
\end{example}
\subsection{{The implication '$\ref{item:KMScriticalMain2}\Rightarrow\ref{item:KMScriticalMain1} $'} of Theorem~\ref{thm:main2}}\label{sec:suffpart}
Proposition~\ref{prop:dodgethedodo} crucially allows to adapt the reduction to \textsc{Bourgain-Brezis}-type estimates as pursued in \cite{GLP} in the symmetric gradient case. We state a version of the result that will prove useful in later sections too:
\begin{lemma}[{\cite{BoBre,BrezisVS,Val}}]\label{lem:BoBre} Let $n\geq 2$. Then the following hold: 
\begin{enumerate}
\item\label{item:BoBre1} There exists a constant $c_{n}>0$ such that 
\begin{align}\label{eq:BourgainBrezis}
\norm{f}_{\dot{\sobo}^{-1,\frac{n}{n-1}}(\R^{n})}\leq c_{n}\Big(\norm{\div f}_{\dot{\sobo}^{-2,\frac{n}{n-1}}(\R^{n})}+\norm{f}_{\lebe^{1}(\R^{n})} \Big)
\end{align}
holds for all $f\in\hold_{c}^{\infty}(\R^{n};\R^{n})$. Moreover, if $\Omega\subset\R^{n}$ is an open and bounded domain with Lipschitz boundary, estimate~\eqref{eq:BourgainBrezis} persists for all $f\in\hold^{\infty}(\Omega;\R^{n})$ with $\R^{n}$ being replaced by $\Omega$. 
\item\label{item:BoBre2} If $n\geq 3$, there exists a constant $c_{n}>0$ such that 
\begin{align}\label{eq:BourgainBrezis1}
\norm{f}_{\lebe^{\frac{n}{n-1}}(\R^{n})}\leq c_{n}\norm{\curl f}_{\lebe^{1}(\R^{n})}\qquad\text{for all}\;f\in\hold_{c,\div}^{\infty}(\R^{n};\R^{n}). 
\end{align}
\end{enumerate}
\end{lemma}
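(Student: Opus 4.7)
Both inequalities are essentially restatements of the Bourgain-Brezis $\lebe^{1}$-theory in a form tailored to the later applications of the paper, so the plan is not to rediscover them but rather to reduce them to the classical statements of \cite{BoBre,BrezisVS} and to the systematisation of Van Schaftingen in \cite{Val}.

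For part~\ref{item:BoBre1}, I would argue by duality: since $\dot{\sobo}^{-1,\frac{n}{n-1}}(\R^{n};\R^{n})$ is by definition the dual of $\dot{\sobo}^{1,n}(\R^{n};\R^{n})$, it is enough to control $|\langle f,\varphi\rangle|$ uniformly for test fields $\varphi\in\hold_{c}^{\infty}(\R^{n};\R^{n})$ with $\norm{\D\varphi}_{\lebe^{n}(\R^{n})}\leq 1$. I would then apply the Helmholtz splitting $\varphi=\varphi_{0}+\nabla\psi$ with $\div\varphi_{0}=0$ and $\Delta\psi=\div\varphi$; since $1<n<\infty$, Lemma~\ref{lem:CZK} delivers uniform $\lebe^{n}$-bounds on both $\D\varphi_{0}$ and $\D^{2}\psi$. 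Integration by parts then rewrites the gradient-part contribution as $-\langle\div f,\psi\rangle$, which is directly controlled by $\norm{\div f}_{\dot{\sobo}^{-2,\frac{n}{n-1}}(\R^{n})}$, while the solenoidal part is bounded by $\norm{f}_{\lebe^{1}(\R^{n})}$ by means of the Bourgain-Brezis approximation lemma: this allows one to replace $\varphi_{0}$ by a field $\Phi\in\sobo^{1,n}\cap\lebe^{\infty}(\R^{n};\R^{n})$ with $\norm{\Phi}_{\lebe^{\infty}}$ comparable to $\norm{\D\varphi_{0}}_{\lebe^{n}}$, after which the $\lebe^{1}$-$\lebe^{\infty}$ duality finishes the step. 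For the bounded-domain version I would dualise against $\sobo_{0}^{1,n}(\Omega;\R^{n})$ and use the local Helmholtz decomposition on $\Omega$ together with the corresponding localised Bourgain-Brezis approximation lemma from \cite{BrezisVS,Val}.

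For part~\ref{item:BoBre2}, the assumption $\div f=0$ together with $n\geq 3$ exhibits $\curl$ as an elliptic and cancelling operator (in the sense of \cite{Val}) when restricted to solenoidal vector fields, and the desired inequality is precisely Van Schaftingen's endpoint $\lebe^{1}$-estimate for such operators. Equivalently, one may use the representation $f=-\mathcal{I}_{2}(\Delta f)=-\mathcal{I}_{2}(\curl^{\ast}\curl f)$, exhibiting $f$ as a bounded combination of Riesz transforms applied to $\mathcal{I}_{1}(\curl f)$; the strong $\lebe^{n/(n-1)}$-bound, which is the delicate content of \cite{BoBre} and its higher dimensional extension \cite{Val}, then follows together with the $\lebe^{n/(n-1)}$-boundedness of Riesz transforms.

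The principal difficulty in both parts is of course the endpoint failure of Calder\'{o}n-Zygmund and fractional integration estimates at $\lebe^{1}$; the Bourgain-Brezis approximation lemma invoked for part~\ref{item:BoBre1} and the cancellation mechanism underlying part~\ref{item:BoBre2} are precisely the tools designed to bypass this obstruction, and so the substantive content of the proof is the proper invocation of these external results rather than any genuinely new estimate.
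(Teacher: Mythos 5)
The paper does not prove Lemma~\ref{lem:BoBre}; it is stated with the bracketed citations \cite{BoBre,BrezisVS,Val} and used as an external input, so your sketch of the reduction must be assessed on its own terms.

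Your treatment of part~\ref{item:BoBre1} is essentially the standard duality argument and is sound in outline. One remark: the preliminary Helmholtz split of the test field $\varphi$ is superfluous, since the Bourgain--Brezis approximation theorem already applies to a \emph{general} $\varphi\in\dot{\sobo}^{1,n}(\R^n;\R^n)$, producing $\Phi\in\lebe^\infty\cap\dot{\sobo}^{1,n}$ and $\psi\in\dot{\sobo}^{2,n}$ with $\varphi=\Phi+\nabla\psi$ and $\norm{\Phi}_{\lebe^\infty}+\norm{\D^2\psi}_{\lebe^n}\lesssim\norm{\D\varphi}_{\lebe^n}$. You should also make explicit that ``replacing $\varphi_0$ by $\Phi$'' is only legitimate because the approximation lemma guarantees the difference is a pure gradient; otherwise $\langle f,\varphi_0\rangle$ and $\langle f,\Phi\rangle$ differ by an uncontrolled term when $f$ is not divergence-free. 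The Lipschitz-domain variant goes through as you describe via \cite{BrezisVS}.

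Part~\ref{item:BoBre2} contains a genuine gap. The claim that ``$\curl$ is elliptic and cancelling when restricted to solenoidal vector fields'' is not a statement covered by the framework of \cite{Va11}, which concerns operators on all of $\hold_c^\infty(\R^n;V)$, not on constrained subspaces. The natural unconstrained candidate $\A u=(\div u,\curl u)$ is elliptic for every $n\geq 2$ but is \emph{not} cancelling: for each $\xi\neq 0$ the image $\A[\xi](\R^n)$ equals $\R\times\xi^{\perp}$ in the appropriate identification, so $\bigcap_{\xi\neq 0}\A[\xi](\R^n)\supseteq\R\times\{0\}\neq\{0\}$. Accordingly the unconstrained estimate $\norm{u}_{\lebe^{n/(n-1)}}\lesssim\norm{\div u}_{\lebe^1}+\norm{\curl u}_{\lebe^1}$ is false — take $u=\nabla\varphi$, which is exactly the counterexample this paper exhibits in Section~2 to show that interchanging $\div$ and $\curl$ in~\eqref{eq:BB} is not allowed. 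Your alternative Riesz-potential route $f=\mathcal{I}_2(\curl^*\curl f)$ likewise does not deliver the strong bound, since $\mathcal{I}_1\colon\lebe^1\to\lebe^{n/(n-1)}$ is not bounded; invoking ``the delicate content of \cite{BoBre}'' at that point merely restates the goal.

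The missing ingredient is a \emph{cocanceling} constraint on $\curl f$, not a canceling property of the forward map. For $n\geq 3$ the vector field $\curl f$, read as a $2$-form, satisfies the nontrivial first-order identity $d(\curl f)=0$; this annihilator is cocanceling precisely when $n\geq 3$ (it becomes vacuous for $n=2$ since $\Lambda^3\R^2=\{0\}$). The duality estimate of \cite{BoBre,Val} for $\lebe^1$-data under a cocanceling constraint then gives $\norm{\curl f}_{\dot{\sobo}^{-1,n/(n-1)}(\R^n)}\lesssim\norm{\curl f}_{\lebe^1(\R^n)}$, and since $\div f=0$, the decomposition~\eqref{eq:decompoLaplaceN} yields $-\Delta f=\nMat{\nabla}{n}^{\top}\curl f$, so elliptic regularity gives $\norm{f}_{\lebe^{n/(n-1)}(\R^n)}\lesssim\norm{\curl f}_{\dot{\sobo}^{-1,n/(n-1)}(\R^n)}$, completing the argument and explaining the exclusion of $n=2$.
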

For completeness, let us note that by embedding $\hold_{c}^{\infty}(\R^{n};\R^{n})$ into $\sobo^{-1,p'}(\R^{n};\R^{n})$ via the dual pairing $\langle f,\varphi\rangle_{\dot{\sobo}^{-1,p'}\times\dot{\sobo}^{1,p}} = \int_{\R^{n}}\langle f,\varphi\rangle_{\R^n}\dif x$, for any $1<p<\infty$ and $A\in\R^{n\times n}$ there exists $c=c(p,n,A)>0$ such that we have the elementary inequality
\begin{align}\label{eq:normo}
\norm{Af}_{\dot{\sobo}^{-1,p'}(\R^{n})}\leq c\norm{f}_{\dot{\sobo}^{-1,p'}(\R^{n})}\qquad\text{for all}\;f\in\hold_{c}^{\infty}(\R^{n};\R^{n}).
\end{align}
Based on the preceding lemma and the results from the previous section we may now pass on to \begin{proof}[Proof of {the implication '$\ref{item:KMScriticalMain2}\Rightarrow\ref{item:KMScriticalMain1} $' of Theorem~\ref{thm:main2}}]
Let $P\in\hold_{c}^{\infty}(\R^{2};\R^{2\times 2})$. By Proposition~\ref{prop:dodgethedodo}, there exists a linear map $\mathcal{L}\colon\R^{2\times 2}\to\R^{2\times 2}$, $\Q\in\mathrm{GL}(2)$ and a linear function $\gamma\colon \R^{2\times 2}\to\R$ such that 
\begin{align}\label{eq:police1}
P=\mathcal{L}(\mathscr{A}[P])+\gamma(P)\Q\qquad\text{everywhere in $\R^{2}$}. 
\end{align}
Realising that 
\begin{align}\label{eq:CurlskalarQ}
\Curl(\gamma(P)\Q)=\Q\begin{pmatrix} -\partial_{2}\gamma(P)\\ \partial_{1}\gamma(P)\end{pmatrix},
\end{align}
the pointwise relation \eqref{eq:police1} gives 
\begin{align*}
\Curl P=\Curl(\mathcal{L}(\mathscr{A}[P]))+\Q\begin{pmatrix} -\partial_{2}\gamma(P)\\ \partial_{1}\gamma(P)\end{pmatrix}
\end{align*}
whence the invertibility of $\Q$ yields 
\begin{align}\label{eq:changes}
\f \coloneqq  \Q^{-1}\Curl P=\Q^{-1}\Curl(\mathcal{L}(\mathscr{A}[P]))+\begin{pmatrix} -\partial_{2}\gamma(P)\\ \partial_{1}\gamma(P)\end{pmatrix}. 
\end{align}
In consequence, 
\begin{align}\label{eq:hlesch}
\begin{split}
\div \f  = \div\begin{pmatrix} \mathbb{L}_{1}(\mathscr{A}[P])\\ \mathbb{L}_{2}(\mathscr{A}[P]) \end{pmatrix} = \mathbb{B}(\mathscr{A}[P])
\end{split}
\end{align}
for some scalar, linear, homogeneous second order differential operator $\mathbb{B}$. Now, based on inequality~\eqref{eq:BourgainBrezis} and recalling $n=2$, we infer 
\begin{align}
\norm{\f}_{\dot{\sobo}^{-1,2}(\R^{2})} & \leq c\,\Big(\norm{\div \f}_{\dot{\sobo}^{-2,2}(\R^{2})} + \norm{\f}_{\lebe^{1}(\R^{2})}\Big) \notag\\ 
& \!\!\!\stackrel{\eqref{eq:hlesch}}{=} c\,\Big(\norm{\mathbb{B}(\mathscr{A}[P])}_{\dot{\sobo}^{-2,2}(\R^{2})} + \norm{\f}_{\lebe^{1}(\R^{2})} \Big)\notag\\ 
& \leq c\,\Big(\norm{\D^{2}(\mathscr{A}[P])}_{\dot{\sobo}^{-2,2}(\R^{2})} + \norm{\f}_{\lebe^{1}(\R^{2})} \Big)\notag\\ 
&\leq c\,\Big(\norm{\mathscr{A}[P]}_{\lebe^{2}(\R^{2})} + \norm{\f}_{\lebe^{1}(\R^{2})} \Big). 
\end{align}
By definition of $\f$, cf.~\eqref{eq:changes}, the previous inequality implies 
\begin{align}\label{eq:construct}
\norm{\Q^{-1}\Curl P}_{\dot{\sobo}^{-1,2}(\R^{2})}\leq c\,\Big(\norm{\mathscr{A}[P]}_{\lebe^{2}(\R^{2})} + \norm{\Q^{-1}\Curl P}_{\lebe^{1}(\R^{2})} \Big), 
\end{align}
and~\eqref{eq:normo} then yields by virtue of $\Q\in\mathrm{GL}(2)$
\begin{align}\label{eq:shinichi}
\begin{split}
\norm{\Curl P}_{\dot{\sobo}^{-1,2}(\R^{2})} & = \norm{\Q(\Q^{-1}\Curl P)}_{\dot{\sobo}^{-1,2}(\R^{2})} \\ & \!\!\!\!\stackrel{\eqref{eq:construct}}{\leq} c(\Q) \Big(\norm{\mathscr{A}[P]}_{\lebe^{2}(\R^{2})} + \norm{\Curl P}_{\lebe^{1}(\R^{2})} \Big). 
\end{split}
\end{align}
Next, put $f\coloneqq \Curl P\in\hold_{c}^{\infty}(\R^{2};\R^{2})$ and consider the solution $u=(u_{1},u_{2})^{\top}$ of the equation $-\Delta u= f$ obtained as $u=\Phi_{2}*f$, where $\Phi_{2}$ is the two-dimensional Green's function for the negative Laplacian. By classical elliptic regularity estimates we have on the one hand
\begin{align}\label{eq:ellipticsimple}
\norm{\D u}_{\lebe^{2}(\R^{2})} \leq c\,\norm{\Delta u}_{{\dot{\sobo}}{^{-1,2}}(\R^{2})} = c\norm{f}_{{\dot{\sobo}}{^{-1,2}}(\R^{2})}.
\end{align}
On the other hand, setting 
\begin{align*}
T\coloneqq \begin{pmatrix}\partial_{2}u_{1} & -\partial_{1}u_{1} \\ \partial_{2}u_{2} &-\partial_{1}u_{2} \end{pmatrix},
\end{align*}
we find by \eqref{eq:ellipticsimple} and \eqref{eq:shinichi}
\begin{align}\label{eq:quittung}
\norm{T}_{\lebe^{2}(\R^{2})} & = \norm{\D u}_{\lebe^{2}(\R^{2})} \leq c(\Q) \Big(\norm{\mathscr{A}[P]}_{\lebe^{2}(\R^{2})} + \norm{\Curl P}_{\lebe^{1}(\R^{2})} \Big)
\end{align}
and 
\begin{align*}
\Curl(T-P) = \begin{pmatrix} -\Delta u_{1} \\ -\Delta u_{2}\end{pmatrix}- f = 0. 
\end{align*}
We may thus write $T-P=\D v$ for some $v\colon\R^{2}\to\R^{2}$, for which Lemma~\ref{lem:CZK} yields\footnote{Note that $T$ is not compactly supported (and neither is $v$), but $v\in{\dot{\sobo}}{^{1,2}}(\R^{2};\R^{2})$ and then the Korn-type inequality underlying the first inequality in~\eqref{eq:kornesque} follows by smooth approximation,  directly using the definition of ${\dot{\sobo}}{^{1,2}}(\R^{2})$.}
\begin{align}\label{eq:kornesque}
\begin{split}
\norm{T-P}_{\lebe^{2}(\R^{2})} &= \norm{\D v}_{\lebe^{2}(\R^{2})} \leq c \norm{\A v}_{\lebe^{2}(\R^{2})} = c\norm{\mathscr{A}[\D v]}_{\lebe^{2}(\R^{2})} \\ 
& \leq c\norm{\mathscr{A}[T-P]}_{\lebe^{2}(\R^{2})} \leq c \norm{T}_{\lebe^{2}(\R^{2})}+c\norm{\mathscr{A}[P]}_{\lebe^{2}(\R^{2})} .
\end{split}
\end{align}
The proof is then concluded by splitting 
\begin{align*}
\norm{P}_{\lebe^{2}(\R^{2})} & \leq \norm{P-T}_{\lebe^{2}(\R^{2})} + \norm{T}_{\lebe^{2}(\R^{2})}
\end{align*}
and using~\eqref{eq:kornesque},~\eqref{eq:quittung} for the first and~\eqref{eq:quittung} for the second term. The proof is complete. 
\end{proof}
\begin{remark}
 Note that, despite of the special situation in three dimensions, in $n=2$ dimensions we have in general 
 \begin{equation}
  \div \Curl P = -\partial_{12}(P_{11}-P_{22})+\partial_{11}P_{12}-\partial_{22}P_{21}\not\equiv 0.
 \end{equation}
In fact, it is a crucial step in the previous proof to express $\div\Curl P$ by a linear combination of second derivatives of $\mathscr{A}[P]$. This is clearly the case if $\mathscr{A}[P]=\dev P$ but is not fulfilled for general $\mathbb{C}$-elliptic operators $\mathbb{A}$. For this reason we need Proposition \ref{prop:dodgethedodo} to ensure that there exists an invertible matrix $\Q$ such that $
 \div \Q^{-1}\Curl P = \mathcal{L}(\D^2 \mathscr{A}[P])$. 
\end{remark}

\subsection{{The implication '$\ref{item:KMScriticalMain1}\Rightarrow\ref{item:KMScriticalMain2} $' of Theorem~\ref{thm:main2}}}\label{sec:necpart}
Again, let $n=2$ and $p=1$. As in \cite{GmSp}, one directly obtains the necessity of $\mathscr{A}$ being induced by an elliptic differential operator for~\eqref{eq:KMS2} to hold; in fact, testing~\eqref{eq:KMS2} with gradient fields $P=\D u$ gives us the usual Korn-type inequality $\norm{\D u}_{\lebe^{2}(\R^{2})}\leq c\norm{\A u}_{\lebe^{2}(\R^{2})}$ for all $u\in\hold_{c}^{\infty}(\R^{2};\R^{2})$. Then one uses Lemma~\ref{lem:CZK} to conclude the ellipticity; hence, in all of the following we may tacitly assume $\mathscr{A}$ to be induced by an elliptic differential operator. 

The necessity of $\mathbb{C}$-ellipticity requires a  refined argument, which we address now: {
\begin{lemma}\label{lem:superhelpful}
In the situation of Theorem \ref{thm:main2}, validity of \eqref{eq:KMS2} implies that $\mathscr{A}$ induces a $\mathbb{C}$-elliptic differential operator. 
\end{lemma}}
\begin{proof} Suppose that $\A$ is not $\mathbb{C}$-elliptic, meaning that there exist $\xi\in\mathbb{C}^{2}\backslash\{0\}$ and $v=\Re(v)+\imag\Im(v)\in\mathbb{C}^{2}\backslash\{0\}$ such that $\A[\xi]v = 0$. Writing this last equation by separately considering real and imaginary parts, we obtain 
\begin{align}\label{eq:telephoneline}
\begin{split} 
& \A[\Re(\xi)]\Re(v) = \A[\Im(\xi)]\Im(v), \\
& \A[\Im(\xi)]\Re(v) = - \A[\Re(\xi)]\Im(v).
\end{split}
\end{align}
For $f\in\hold_{c}^{\infty}(\R^{2})$, we then define $x_{\xi}\coloneqq (\langle x,\Re(\xi)\rangle,\langle x,\Im(\xi)\rangle)$ and 
\begin{align}\label{eq:Pfdef}
\begin{split}
P_{f}(x) \coloneqq&\  \Re(v)\otimes \Re(\xi) (\partial_{1}f)(x_\xi)- \Im(v)\otimes \Im(\xi)(\partial_{1}f)(x_\xi) \\ 
&  + \Re(v)\otimes\Im(\xi)(\partial_{2}f)(x_\xi) + \Im(v)\otimes\Re(\xi)(\partial_{2}f)(x_\xi). 
\end{split}
\end{align}
We first claim that there exists a constant $c>0$ such that 
\begin{align}\label{eq:lowerbound1}
\norm{\nabla f}_{\lebe^{2}(\R^{2})}\leq c\,\norm{P_{f}}_{\lebe^{2}(\R^{2})}. 
\end{align}
To this end, we first note that we may assume $\Re(\xi)$ and $\Im(\xi)$ to be linearly independent over $\R$. This argument is certainly clear to experts, but since it is crucial for our argument, we give the quick proof. Suppose that  $\Re(\xi)$ and $\Im(\xi)$ are not linearly independent. Then there are three options: 
\begin{itemize}
\item[(i)] If $\Im(\xi)=0$, then $\Re(\xi)\neq 0$ (as otherwise $\xi=0$). Then~$\eqref{eq:telephoneline}_{1}$ implies $\Re(v)=0$ by ellipticity of $\A$, and~$\eqref{eq:telephoneline}_{2}$ implies $\Im(v)=0$ by ellipticity of $\A$. Thus $v=0\in\mathbb{C}^{2}$, contradicting our assumption $v\in\mathbb{C}^{2}\backslash\{0\}$. 
\item[(ii)] If $\Re(\xi)=0$, we may imitate the argument from (i) to arrive at a contradiction. 
\item[(iii)] Based on (i) and (ii), we may assume that $\Im(\xi),\Re(\xi)\neq 0$ and that there exists $\lambda\neq 0$ such that $\Re(\xi)=\lambda\Im(\xi)$. Inserting this relation into \eqref{eq:telephoneline}, we arrive at 
\begin{align*}
\lambda\A[\Im(\xi)]\Re(v) &= \A[\Im(\xi)]\Im(v) && \Rightarrow& \A[\Im(\xi)](\lambda\Re(v)-\Im(v))&=0,\\
\A[\Im(\xi)]\Re(v) &= -\lambda\A[\Im(\xi)]\Im(v)&& \Rightarrow &\A[\Im(\xi)](\Re(v)+\lambda\Im(v))&=0. 
\end{align*}
By our assumption, $\lambda\neq 0$, and ellipticity of $\A$ implies that 
\begin{align*}
\lambda\Re(v)=\Im(v)\;\text{and}\;\Re(v)+\lambda\Im(v)=0,\;\;\;\text{so}\;(1+\lambda^{2})\Re(v)=0, 
\end{align*}
so $\Re(v)=\Im(v)=0$, and this is at variance with our assumption $v\neq 0$. 
\end{itemize} 
Secondly, we similarly note that $\Re(v)$ and $\Im(v)$ can be assumed to be linearly independent over $\R$. Suppose that this is not the case. Then there are three options:
\begin{itemize}
\item[(i')] If $\Re(v)=0$, then necessarily $\Im(v)\neq 0$. Then $\eqref{eq:telephoneline}_{1}$ implies that $\Im(\xi)=0$. Inserting this into $\eqref{eq:telephoneline}_{2}$ yields $\Re(\xi)=0$ and so $\xi=0$, which is at variance with our assumption of $\xi\in\mathbb{C}^{2}\backslash\{0\}$. 
\item[(ii')] If $\Im(v)=0$, we may imitate the argument from (i') to arrive at a contradiction. 
\item[(iii')] Based on (i') and (ii'), we may assume that $\Im(v),\Re(v)\neq 0$ and that there exists $\lambda\neq 0$ such that $\Re(v)=\lambda\Im(v)$.  Inserting this relation into \eqref{eq:telephoneline} yields
\begin{align*}
 \lambda\A[\Re(\xi)]\Im(v) &= \A[\Im(\xi)]\Im(v) && \Rightarrow &\A[\lambda\Re(\xi)-\Im(\xi)]\Im(v)&= 0,\\
 \lambda\A[\Im(\xi)]\Im(v) &= - \A[\Re(\xi)]\Im(v) &&\Rightarrow &\A[\lambda\Im(\xi)+\Re(\xi)]\Im(v)&=0. 
\end{align*}
By our assumption $\Im(v)\neq 0$, ellipticity of $\A$ yields 
\begin{align*}
\lambda\Re(\xi)=\Im(\xi)\;\text{and}\;-\lambda\Im(\xi)=\Re(\xi) \Rightarrow (1+\lambda^{2})\Im(\xi)=0. 
\end{align*}
But then $\Im(\xi)=0$ and, by $\lambda\neq 0$, $\Re(\xi)=0$, so $\xi=0$, in turn being at variance with $\xi\neq 0$. 
\end{itemize} 
Based on (i)--(iii) and (i')--(iii'), we conclude that the set 
\begin{align*}
\{A,B,C,D\}\coloneqq \{\Re(v)\otimes \Re(\xi),\Im(v)\otimes \Im(\xi),\Re(v)\otimes\Im(\xi), \Im(v)\otimes\Re(\xi)\} 
\end{align*}
displays a basis of $\R^{2\times 2}$, and that 
\begin{align*}
|\!|\!| P |\!|\!| \coloneqq  |a|+|b|+|c|+|d|\;\;\;\text{whenever}\; P=aA+bB+cC+dD\in\R^{2\times 2} 
\end{align*}
is a norm on $\R^{2\times 2}$. This norm is equivalent to any other norm on $\R^{2\times 2}$, and hence~\eqref{eq:lowerbound1} follows by the definition of $P_{f}$ and a change of variables, again recalling that $\Re(\xi)$, $\Im(\xi)$ and $\Re(v)$, $\Im(v)$ are linearly independent. 

In a next step, we record that for any $f\in\hold_{c}^{\infty}(\R^{2})$ 
\begin{align}\label{eq:upperbound1}
\begin{split}
\mathscr{A}[P_{f}(x)] & = \A[\Re(\xi)]\Re(v)(\partial_{1}f)(x_{\xi}) - \A[\Im(\xi)]\Im(v)(\partial_{1}f)(x_{\xi}) \\ 
& + \A[\Im(\xi)]\Re(v)(\partial_{2}f)(x_{\xi}) + \A[\Re(\xi)]\Im(v)(\partial_{2}f)(x_{\xi}) \stackrel{\eqref{eq:telephoneline}}{=} 0. 
\end{split} 
\end{align}
To conclude the proof, we will now establish that the Korn-Maxwell-Sobolev inequality in this situation yields the contradictory estimate
\begin{align}\label{eq:OlliNorthContra}
\norm{\nabla f}_{\lebe^{2}(\R^{2})} \leq c\,\norm{\Delta f}_{\lebe^{1}(\R^{2})}\qquad\text{for all}\;f\in\hold_{c}^{\infty}(\R^{2}). 
\end{align}
To this end, note that for any $\varphi\in\hold_{c}^{\infty}(\R^{2})$ and any $X\in\R^{2\times 2}$ we have as in \eqref{eq:CurlskalarQ}:
\begin{align}\label{eq:CurlIdentityNeccPart}
\Curl(\varphi\, X)=X \binom{-\partial_{2}\varphi}{\partial_{1}\varphi}.
\end{align}
Moreover, writing $\Re(\xi)=(\xi_{11},\xi_{12})^{\top}$ and $\Im(\xi)=(\xi_{21},\xi_{22})^{\top}$, we find 
\begin{align}\label{eq:enjoythetaste}
\begin{split}
\partial_{1}\big((\partial_{1}f)(x_{\xi}) \big) = \left\langle  (\nabla\partial_{1}f)(x_{\xi}),\binom{\xi_{11}}{\xi_{21}}\right\rangle, \\ 
\partial_{2}\big((\partial_{1}f)(x_{\xi}) \big) = \left\langle  (\nabla\partial_{1}f)(x_{\xi}),\binom{\xi_{12}}{\xi_{22}}\right\rangle, \\  
\partial_{1}\big((\partial_{2}f)(x_{\xi})\big) = \left\langle  (\nabla\partial_{2}f)(x_{\xi}),\binom{\xi_{11}}{\xi_{21}}\right\rangle,\\  \partial_{2}\big((\partial_{2}f)(x_{\xi})\big) = \left\langle  (\nabla\partial_{2}f)(x_{\xi}),\binom{\xi_{12}}{\xi_{22}}\right\rangle. 
\end{split}
\end{align}
As a consequence of~\eqref{eq:enjoythetaste}, we have for $j\in\{1,2\}$
\begin{align}\label{eq:fensterputzer}
\binom{-\partial_{2}( (\partial_{j}f)(x_{\xi}))}{\partial_{1}((\partial_{j}f)(x_{\xi}))} =\begin{pmatrix} -\xi_{12} & -\xi_{22} \\ \xi_{11} & \xi_{21}\end{pmatrix}(\nabla \partial_{j}f)(x_{\xi}),
\end{align}
Based on~\eqref{eq:CurlIdentityNeccPart}, the previous two identities imply by definition of $P_{f}$ (cf.~\eqref{eq:Pfdef})
\begin{align*} 
\Curl P_{f}(x) & = \Re(v)\,(\xi_{11},\xi_{12})\,\begin{pmatrix} -\xi_{12} & -\xi_{22} \\ \xi_{11} & \xi_{21}\end{pmatrix}(\nabla \partial_{1}f)(x_{\xi})\\ 
& - \mathrm{Im}(v)\,(\xi_{21},\xi_{22}) \begin{pmatrix} -\xi_{12} & -\xi_{22} \\ \xi_{11} & \xi_{21}\end{pmatrix}(\nabla \partial_{1}f)(x_{\xi}) \\ 
& + \Re(v)\,(\xi_{21},\xi_{22})\begin{pmatrix} -\xi_{12} & -\xi_{22} \\ \xi_{11} & \xi_{21}\end{pmatrix}(\nabla \partial_{2}f)(x_{\xi}) \\ 
& + \Im(v)\,(\xi_{11},\xi_{12})\begin{pmatrix} -\xi_{12} & -\xi_{22} \\ \xi_{11} & \xi_{21}\end{pmatrix}(\nabla \partial_{2}f)(x_{\xi}). 
\end{align*} 
Now define $\displaystyle \alpha \coloneqq  \det\begin{pmatrix} \xi_{11} & \xi_{12} \\ \xi_{21} & \xi_{22}\end{pmatrix} $ so that 
\begin{align} 
\Curl P_{f}(x) & = \Re(v)\,(0,-\alpha)(\nabla \partial_{1}f)(x_{\xi}) - \Im(v)\,(\alpha,0)(\nabla \partial_{1}f)(x_{\xi}) \notag\\ 
& \quad + \Re(v)\,(\alpha,0)(\nabla\partial_{2}f)(x_{\xi}) +\Im(v)\,(0,-\alpha)(\nabla\partial_{2}f)(x_{\xi}) \notag\\ 
& = -\alpha\Re(v) (\partial_{21}f)(x_{\xi}) -\alpha\Im(v)(\partial_{11}f)(x_{\xi})\notag\\
 & \quad +\alpha\Re(v)(\partial_{12}f)(x_{\xi})-\alpha\Im(v)(\partial_{22}f)(x_{\xi}) \notag\\ 
& = -\alpha (\Delta f)(x_{\xi})\,\Im(v). \label{eq:upperbound2}
\end{align}
Hence, in view of \eqref{eq:lowerbound1}, \eqref{eq:upperbound1} and \eqref{eq:upperbound2}, starting with the Korn-Maxwell-Sobolev inequality and a change of variables (recall that $\Re(\xi),\Im(\xi)$ are linearly independent), we end up at the contradictory estimate \eqref{eq:OlliNorthContra}. Thus, $\A$ has to be $\mathbb{C}$-elliptic and the proof is complete. 
\end{proof}
{\subsection{Proof of Theorem \ref{thm:main2}} We now briefly pause to concisely gather the arguments required for the proof of Theorem \ref{thm:main2}. The direction '\ref{item:KMScriticalMain1}$\Rightarrow$\ref{item:KMScriticalMain2}' is given by Lemma \ref{lem:superhelpful}, whereas the equivalence of \ref{item:KMScriticalMain2} and \ref{item:KMScriticalMain3} is a direct consequence of Lemma \ref{lem:helpfulDiffOp}. In turn, the  equivalence '\ref{item:KMScriticalMain2}$\Leftrightarrow$\ref{item:KMScriticalMain4}' is established in Lemma \ref{lem:characterC}. Finally, the remaining implication '\ref{item:KMScriticalMain2}$\Rightarrow$ \ref{item:KMScriticalMain1}' is proved at the end of Paragraph \ref{sec:suffpart} above. In conclusion, the proof of Theorem \ref{thm:main2} is complete.}
\subsection{KMS inequalities with non-zero boundary values}
Even though the present paper concentrates on compactly supported maps, we like to comment on how Proposition~\ref{prop:dodgethedodo} can be used to derive variants for maps with non-zero boundary values; its generalisation to higher space dimensions shall be pursued elsewhere. 
\begin{theorem}[Non-zero boundary values, $(p,n)=(1,2)$]\label{thm:domains}
The following are equivalent: 
\begin{enumerate}
\item\label{item:bdd1} The linear map $\mathscr{A}\colon\R^{2\times 2}\to\R^{2\times 2}$ induces a $\mathbb{C}$-elliptic differential operator $\mathbb{A}$ in $n=2$ dimensions via $\A u \coloneqq \mathscr{A}[\D u]$. 
\item\label{item:bdd2} There exists a \emph{finite dimensional subspace} $\mathcal{K}$ of the $\R^{2\times 2}$-valued polynomials such that for any open and bounded, simply connected domain $\Omega\subset\R^{2}$ with Lipschitz boundary $\partial\Omega$ there exists $c=c(\mathscr{A},\Omega)>0$ such that 
\begin{align}\label{eq:korner}
\min_{\Pi\in\mathcal{K}}\norm{P-\Pi}_{\lebe^{2}(\Omega)}\leq c\,\Big(\norm{\mathscr{A}[P]}_{\lebe^{2}(\Omega)}+\norm{\Curl P}_{\lebe^{1}(\Omega)}\Big)
\end{align}
holds for all $P\in\hold^{\infty}(\Omega;\R^{2\times 2})$. 
\end{enumerate}
\end{theorem}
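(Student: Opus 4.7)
The plan is to base the proof on the almost complementary decomposition of Proposition~\ref{prop:dodgethedodo} and to adapt the Bourgain--Brezis argument of Section~\ref{sec:suffpart} to the bounded domain $\Omega$, replacing the $\R^{2}$-Bourgain--Brezis inequality by its domain variant from Lemma~\ref{lem:BoBre}~\ref{item:BoBre1} and upgrading the resulting $\dot{\sobo}^{-1,2}(\Omega)$-bound on a \emph{scalar} function to an $\lebe^{2}(\Omega)$-bound modulo constants by a Bogovskii-type duality on $\Omega$.

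\textbf{Sufficiency,~\ref{item:bdd1}$\Rightarrow$\ref{item:bdd2}.} I would apply Proposition~\ref{prop:dodgethedodo} to obtain linear maps $\mathcal{L}\colon\R^{2\times 2}\to\R^{2\times 2}$, $\gamma\colon\R^{2\times 2}\to\R$ and an element $\Q\in\mathrm{GL}(2)$ such that, writing $g\coloneqq \gamma(P)\in\hold^{\infty}(\Omega)$,
\begin{align*}
P=\mathcal{L}(\mathcal{A}[P])+g\,\Q\qquad\text{on }\Omega,
\end{align*}
and set $\mathcal{K}\coloneqq \R\,\Q$, a one-dimensional subspace of degree-zero $\R^{2\times 2}$-valued polynomials depending only on $\mathcal{A}$. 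Since $\|\mathcal{L}(\mathcal{A}[P])\|_{\lebe^{2}(\Omega)}\leq c\|\mathcal{A}[P]\|_{\lebe^{2}(\Omega)}$ is immediate, it suffices to bound $\|(g-\bar{g})\Q\|_{\lebe^{2}(\Omega)}$ by the right-hand side of~\eqref{eq:korner} with $\bar{g}\coloneqq |\Omega|^{-1}\int_{\Omega}g\dif x$ and to take $\Pi\coloneqq \bar{g}\,\Q\in\mathcal{K}$. Differentiating the decomposition and invoking~\eqref{eq:CurlskalarQ} produces, as in~\eqref{eq:changes}--\eqref{eq:hlesch}, the two identities
\begin{align*}
\binom{-\partial_{2}g}{\partial_{1}g}=\Q^{-1}\bigl(\Curl P-\Curl\mathcal{L}(\mathcal{A}[P])\bigr),\qquad \div\bigl(\Q^{-1}\Curl P\bigr)=\mathbb{B}(\mathcal{A}[P]),
\end{align*}
where $\mathbb{B}$ is a scalar, homogeneous second-order constant-coefficient operator (the $\nabla^{\perp}g$-part being divergence-free). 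Applying Lemma~\ref{lem:BoBre}~\ref{item:BoBre1} on $\Omega$ to $\Q^{-1}\Curl P$ together with the second identity and the second-order nature of $\mathbb{B}$ gives
\begin{align*}
\|\Q^{-1}\Curl P\|_{\dot{\sobo}^{-1,2}(\Omega)}\leq c\,\bigl(\|\mathcal{A}[P]\|_{\lebe^{2}(\Omega)}+\|\Curl P\|_{\lebe^{1}(\Omega)}\bigr),
\end{align*}
and combining this with the trivial $\dot{\sobo}^{-1,2}$-boundedness of the first-order operator $\Curl\circ\mathcal{L}$ on $\lebe^{2}$ yields $\|\nabla g\|_{\dot{\sobo}^{-1,2}(\Omega)}\leq c\,(\|\mathcal{A}[P]\|_{\lebe^{2}(\Omega)}+\|\Curl P\|_{\lebe^{1}(\Omega)})$. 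A Bogovskii-duality argument, equivalently Lemma~\ref{lem:Necas} applied to the mean-zero function $g-\bar{g}$ together with a Poincar\'{e} step, then produces the required $\lebe^{2}(\Omega)$-bound on $g-\bar{g}$ and completes~\ref{item:bdd2}.

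\textbf{Necessity,~\ref{item:bdd2}$\Rightarrow$\ref{item:bdd1}.} Ellipticity of $\mathbb{A}$ is obtained by testing~\eqref{eq:korner} with $P=\D u$ for $u\in\hold_{c}^{\infty}(\Omega';\R^{2})$ on some $\Omega'\Subset\Omega$: then $\Curl P=0$ and any $\Pi\in\mathcal{K}$ agrees with $\D u=0$ on the positive-measure set $\Omega\setminus\overline{\Omega'}$, so the equivalence of norms on the finite-dimensional space $\mathcal{K}$ forces $\|\Pi\|_{\lebe^{2}(\Omega)}\lesssim \|\A u\|_{\lebe^{2}(\Omega)}$, and the triangle inequality produces the Korn-type estimate $\|\D u\|_{\lebe^{2}(\R^{2})}\leq c\|\A u\|_{\lebe^{2}(\R^{2})}$; Lemma~\ref{lem:CZK} then forces ellipticity of $\mathbb{A}$. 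To upgrade to $\mathbb{C}$-ellipticity I would argue by contradiction following Section~\ref{sec:necpart}: if there existed $\xi,v\in\mathbb{C}^{2}\setminus\{0\}$ with $\A[\xi]v=0$, the compactly supported field $P_{f}$ of~\eqref{eq:Pfdef} would satisfy $\mathcal{A}[P_{f}]=0$ by~\eqref{eq:upperbound1} and $\Curl P_{f}=-\alpha(\Delta f)(x_{\xi})\Im(v)$ by~\eqref{eq:upperbound2}, so applying the assumed inequality on $\Omega=\ball_{R}(0)$ with $R$ large enough to contain $\spt P_{f}$ and re-running the finite-dimensional norm-equivalence trick from the ellipticity step would yield the contradictory estimate $\|\nabla f\|_{\lebe^{2}(\R^{2})}\leq c\|\Delta f\|_{\lebe^{1}(\R^{2})}$, known to fail on mollifications of the two-dimensional Newtonian potential.

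The main obstacle is precisely the passage from $\|\nabla g\|_{\dot{\sobo}^{-1,2}(\Omega)}$ to $\|g-\bar{g}\|_{\lebe^{2}(\Omega)}$: this is the only step where simple connectedness and the Lipschitz regularity of $\partial\Omega$ genuinely enter, and it is there that the constant $c=c(\mathcal{A},\Omega)$ picks up its dependence on the geometry of $\Omega$. All remaining steps are algorithmic once Proposition~\ref{prop:dodgethedodo} is in place.
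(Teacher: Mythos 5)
Your proposal is correct and its sufficiency part deploys the same ingredients as the paper — Proposition~\ref{prop:dodgethedodo}, the domain version of Lemma~\ref{lem:BoBre}~\ref{item:BoBre1}, and the Ne\v{c}as--Lions estimate combined with a compactness step — but in a different order, and your necessity part takes a genuinely different route.

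On the sufficiency side, the paper first applies Ne\v{c}as to $P$ directly to obtain an estimate of $\|P\|_{\lebe^q(\Omega)}$ in terms of $\|\mathcal{A}[P]\|_{\lebe^q}$, $\|P-\mathcal{L}(\mathcal{A}[P])\|_{\sobo^{-1,q}}$ and $\|\Curl P\|_{\sobo^{-1,q}}$, then eliminates the subcritical middle term by a Peetre--Tartar compactness argument (introducing projections onto the finite-dimensional kernel $\mathcal{N}$), and only then invokes Bourgain--Brezis on $\Omega$ to convert $\|\Curl P\|_{\sobo^{-1,2}}$ into $\|\Curl P\|_{\lebe^1}$. You instead apply Bourgain--Brezis right away to $\f=\Q^{-1}\Curl P$ to bound the scalar $\nabla\gamma(P)$ in $\dot{\sobo}^{-1,2}(\Omega)$, and push the compactness step to the very end, where it appears as the Bogovski\u{\i}/Ne\v{c}as duality $\|g-\bar g\|_{\lebe^2(\Omega)}\lesssim\|\nabla g\|_{\sobo^{-1,2}(\Omega)}$ for mean-zero scalars. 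This is a clean reorganisation, and the explicit one-dimensional choice $\mathcal{K}=\R\Q$ is arguably sharper than the paper's $\mathcal{K}\subset\nabla\mathscr{P}_m$: one can check that the kernel space $\mathcal{N}$ of the paper is always contained in $\R\Q$, so both work. Two technical caveats in the way you phrase it: the passage from $\|\nabla g\|_{\sobo^{-1,2}}$ to $\|g-\bar g\|_{\lebe^{2}}$ is not simply ``Lemma~\ref{lem:Necas} plus a Poincar\'{e} step'' — absorbing the $\|g-\bar g\|_{\sobo^{-1,2}}$ term produced by Ne\v{c}as genuinely requires a Rellich-type compactness argument or the bounded right inverse of the divergence — and this step needs connectedness and Lipschitz regularity of $\partial\Omega$ but not simple connectedness; simple connectedness only enters the paper's identification of $\mathcal{N}$ via curl-free $\Rightarrow$ gradient.

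On the necessity side, your route is noticeably longer than what the paper does, and the way you invoke the assumed inequality on $\Omega=\ball_R(0)$ ``with $R$ large enough to contain $\spt P_f$'' has a constant-tracking issue: the constant in \eqref{eq:korner} and the norm-equivalence constant on $\mathcal{K}$ both depend on $\Omega$, so varying $R$ with $f$ is not allowed. The fix is easy — fix $\Omega=\ball_1(0)$ and restrict to $f$ supported in $\ball_{1/2}(0)$, which suffices because the $\lebe^2$–$\lebe^1$ failure for the Laplacian is local — but it deserves to be said. More to the point, the paper obtains $\mathbb{C}$-ellipticity in a single stroke, with no need for either the ellipticity preamble or the $P_f$ construction: inserting $P=\D u$ into \eqref{eq:korner} shows that $\A u\equiv 0$ forces $\D u\in\mathcal{K}$, hence $u$ coincides with a polynomial of degree bounded in terms of $\mathcal{K}$ alone; by Lemma~\ref{lem:Poincare} (Smith's kernel characterisation), this finite-dimensionality of $\ker(\A)$ is equivalent to $\mathbb{C}$-ellipticity. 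Your self-contained contradiction argument is a valid alternative that bypasses Lemma~\ref{lem:Poincare}, at the cost of re-running the $n=2,p=1$ failure analysis of Section~\ref{sec:necpart}.
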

\begin{proof}[Proof of Theorem~\ref{thm:domains}] Ad '\ref{item:bdd1}$\Rightarrow$\ref{item:bdd2}'. 
As {at the end of Paragraph \ref{sec:suffpart}}, we use Proposition~\ref{prop:dodgethedodo} to write $P=\mathcal{L}(\mathscr{A}[P])+\gamma(P)\,\Q$ pointwisely for all $P\in\hold^{\infty}(\Omega;\R^{2\times 2})$, suitable linear maps $\mathcal{L}\colon\R^{2\times 2}\to\R^{2\times 2}$, $\gamma\colon\R^{2\times 2}\to\R$ and a fixed  matrix $\Q\in\mathrm{GL}(2)$. Let $1<q<\infty$. For $\Omega\subset\R^{2}$ has Lipschitz boundary, we may invoke the Ne\v{c}as estimate from Lemma~\ref{lem:Necas} to obtain for all $P\in\hold^{\infty}(\Omega;\R^{2\times 2})$
\begin{align}\label{eq:Necas1}
\begin{split}
\norm{P}_{\lebe^{q}(\Omega)} & \leq c\,\Big(\norm{\mathscr{A}[P]}_{\lebe^{q}(\Omega)}+\norm{\gamma(P)}_{\lebe^{q}(\Omega)} \Big) \\ 
& \leq c\,\Big(\norm{\mathscr{A}[P]}_{\lebe^{q}(\Omega)}+\norm{\gamma(P)}_{\sobo^{-1,q}(\Omega)}+\norm{\nabla\gamma(P)}_{\sobo^{-1,q}(\Omega)} \Big)
\end{split}
\end{align}
with a constant $c=c(q,\Omega)>0$. We then equally have~\eqref{eq:changes}, whereby 
\begin{align}\label{eq:Necas2}
\begin{split}
\norm{\nabla\gamma(P)}_{\sobo^{-1,q}(\Omega)} & =\norm{\binom{-\partial_{2}\gamma(P)}{\partial_{1}\gamma(P)}}_{\sobo^{-1,q}(\Omega)} \\ 
& \!\!\!\!\stackrel{\eqref{eq:changes}}{\leq} c\,\big( \norm{\Curl P}_{\sobo^{-1,q}(\Omega)}+\norm{\Curl(\mathcal{L}(\mathscr{A}[P]))}_{\sobo^{-1,q}(\Omega)}\big) \\ 
& \leq c\,\big( \norm{\Curl P}_{\sobo^{-1,q}(\Omega)}+\norm{\mathscr{A}[P]}_{\lebe^{q}(\Omega)}\big). 
\end{split}
\end{align}
Combining~\eqref{eq:Necas1} and~\eqref{eq:Necas2}, we arrive at 
\begin{align}\label{eq:Necas3}
\norm{P}_{\lebe^{q}(\Omega)}\leq c\,\big(\norm{\mathscr{A}[P]}_{\lebe^{q}(\Omega)}+\norm{P-\mathcal{L}(\mathscr{A}[P])}_{\sobo^{-1,q}(\Omega)}+\norm{\Curl P}_{\sobo^{-1,q}(\Omega)} \big),
\end{align}
where $c=c(q,\mathscr{A},\Omega)>0$. Next let, adopting the notation from~\eqref{eq:Waqspace},  
\begin{align*}
P\in\mathcal{N}\coloneqq \{P'\in\sobo^{\Curl,q}(\Omega)\mid \mathscr{A}[P']=0\;\;\text{and}\;\;\Curl P'=0\}. 
\end{align*}
Since $\Omega\subset\R^{2}$ is simply connected and has Lipschitz boundary, $\Curl P=0$ implies that $P=\D u$ for some $u\in\sobo^{1,q}(\Omega;\R^{2})$. Then $\mathscr{A}[P]=0$ yields $\mathbb{A}u \coloneqq \mathscr{A}[\D u]=0$. In conclusion, $\mathbb{C}$-ellipticity of $\A$ yields by virtue of Lemma~\ref{lem:Poincare} that there exists $m\in\N$ such that $\mathcal{N}\subset\nabla\mathscr{P}_{m}(\R^{2};\R^{2})\hookrightarrow\lebe^{2}(\Omega;\R^{2\times 2})$. We denote $m_{0}\coloneqq \dim(\mathcal{N})$ and choose an $\lebe^{2}$-orthonormal basis $\{\mathbf{g}_{1},...,\mathbf{g}_{m_{0}}\}$ of $\mathcal{N}$; furthermore, slightly abusing notation, we set 
\begin{align*}
\mathcal{N}^{\bot}\coloneqq \left\{P\in\lebe^{1}(\Omega;\R^{2\times 2})\mid  \int_{\Omega}\langle P,\mathbf{g}_{j}\rangle\dif x = 0\;\text{for all}\;j\in\{1,...,m_{0}\} \right\}.
\end{align*}
Our next claim is that there exists a constant  $c=c(q,\mathscr{A},\Omega)>0$ such that 
\begin{align}\label{eq:Necas4}
\norm{P}_{\lebe^{q}(\Omega)}\leq c\,\big(\norm{\mathscr{A}[P]}_{\lebe^{q}(\Omega)}+\sum_{j=1}^{m_{0}}\left\vert\int_{\Omega}\langle\mathbf{g}_{j},P\rangle\dif x\right\vert +\norm{\Curl P}_{\sobo^{-1,q}(\Omega)} \big)
\end{align}
holds for all $P\in\sobo^{\Curl,q}(\Omega)$.  If~\eqref{eq:Necas4} were false, for each $i\in\N$ we would find $P_{i}\in\sobo^{\Curl,q}(\Omega)$ with $\norm{P_{i}}_{\lebe^{q}(\Omega)}=1$ and 
\begin{align}\label{eq:Necas5}
\big(\norm{\mathscr{A}[P_{i}]}_{\lebe^{q}(\Omega)}+\sum_{j=1}^{m_{0}}\left\vert\int_{\Omega}\langle\mathbf{g}_{j},P_{i}\rangle\dif x\right\vert +\norm{\Curl P_{i}}_{\sobo^{-1,q}(\Omega)} \big)<\frac1i. 
\end{align}
Since $1<q<\infty$, the Banach-Alaoglu theorem allows to pass to a non-relabeled subsequence such that $P_{i}\rightharpoonup P$ weakly in $\lebe^{q}(\Omega;\R^{2\times 2})$ for some $P\in\lebe^{q}(\Omega;\R^{2\times 2})$. Lower semicontinuity of norms with respect to weak convergence and~\eqref{eq:Necas5} then imply $\mathscr{A}[P]=0$. Moreover, we have for all $\varphi\in\sobo_{0}^{1,q'}(\Omega;\R^{2})$ that the formal adjoint of $\Curl$ is given by $\Curl^{*} \varphi =\varphi\nMat{\nabla}{n}\in\lebe^{q'}(\Omega;\R^{2\times 2})$ and therefore
\begin{align*}
|\langle\Curl(P-P_{i}),\varphi\rangle_{\sobo^{-1,q}(\Omega)\times\sobo_{0}^{1,q'}(\Omega)}|\leq \left\vert\int_{\Omega}\langle P-P_{i},\Curl^{*} \varphi \rangle\dif x\right\vert\to 0
\end{align*}
as $i\to\infty$ because of $P_{i}\rightharpoonup P$ in $\lebe^{q}(\Omega;\R^{2\times 2})$. Thus, $\Curl P_{i}\stackrel{*}{\rightharpoonup}\Curl P$ in $\sobo^{-1,q}(\Omega;\R^{2})$. 
By lower semicontinuity of norms for weak*-convergence and~\eqref{eq:Necas5}, we have $\Curl P=0$ in $\sobo^{-1,q}(\Omega;\R^{2})$. On the other hand, since the $\mathbf{g}_{j}$'s are polynomials and hence trivially belong to $\lebe^{q'}(\Omega;\R^{2\times 2})$, we deduce that
\begin{align*}
\begin{cases} 
\Curl P=0&\;\text{in}\;\mathscr{D}'(\Omega;\R^{2}), \\ 
\mathscr{A}[P]=0&\;\text{in}\;\lebe^{q}(\Omega;\R^{2\times 2}),\\ 
P\in\mathcal{N}^{\bot}, 
\end{cases} 
\end{align*}
so that $P\in\mathcal{N}\cap\mathcal{N}^{\bot}=\{0\}$. Finally, since $\mathcal{L}\circ\,\mathscr{A}\colon\R^{2\times 2}\to\R^{2\times 2}$ is linear, we have $P_{i}-\mathcal{L}(\mathscr{A}[P_{i}])\rightharpoonup P-\mathcal{L}(\mathscr{A}[P]) = 0$ weakly in $\lebe^{q}(\Omega;\R^{2\times 2})$, and by compactness of the embedding $\lebe^{q}(\Omega;\R^{2\times 2})\hookrightarrow\hookrightarrow\sobo^{-1,q}(\Omega;\R^{2\times 2})$, we may assume that $P_{i}-\mathcal{L}(\mathscr{A}[P_{i}])\to 0$ strongly in $\sobo^{-1,q}(\Omega;\R^{2\times 2})$. Inserting $P_{i}$ into ~\eqref{eq:Necas3}, the left-hand side is bounded below by $1$ whereas the right-hand side tends to zero as $i\to\infty$. This yields the requisite contradiction and~\eqref{eq:Necas4} follows. 

Based on~\eqref{eq:Necas4}, we may conclude the proof as follows. If $P\in\mathcal{N}^{\bot}$, we use Lemma~\ref{lem:BoBre}~\ref{item:BoBre1} for Lipschitz domains and imitate~\eqref{eq:changes} to find with $\f\coloneqq \Q^{-1}\Curl P$
\begin{align*}
\norm{\Curl P}_{\sobo^{-1,2}(\Omega)} & \leq  c\norm{\f}_{\sobo^{-1,2}(\Omega)} \leq c\,\big(\norm{\mathscr{A}[P]}_{\lebe^{2}(\Omega)}+\norm{\f}_{\lebe^{1}(\Omega)} \big). 
\end{align*}
Then~\eqref{eq:Necas4} with $q=2$ becomes 
\begin{align}\label{eq:Necas6}
\norm{P}_{\lebe^{2}(\Omega)}\leq c\,\big(\norm{\mathscr{A}[P]}_{\lebe^{2}(\Omega)}+\norm{\Curl P}_{\lebe^{1}(\Omega)} \big). 
\end{align}
In the general case, we apply inequality~\eqref{eq:Necas6} to $P-\sum_{j=1}^{m_{0}}\int_{\Omega}\langle\mathbf{g}_{j},P\rangle\dif x\,\mathbf{g}_{j}\in\mathcal{N}^{\bot}$. Hence~\ref{item:bdd2} follows.\medskip

Ad~'\ref{item:bdd2}$\Rightarrow$\ref{item:bdd1}'.\quad Inserting gradients $P=\D u$ into \eqref{eq:korner} yields the Korn-type inequality $$\min_{\Pi\in\mathcal{K}}\norm{\D u-\Pi}_{\lebe^{2}(\Omega)}\leq c\norm{\mathbb{A}u}_{\lebe^{2}(\Omega)} \qquad \text{for all $u\in\hold^{\infty}(\Omega;\R^{2})$.}$$  By routine smooth approximation, this inequality extends to all $u\in{\sobo}{^{\A,2}}(\Omega)$. In particular, if $\A u\equiv 0$ in $\Omega$, then $u$ must coincide with a polynomial of fixed degree. Since this is possible  only if $\A$ is $\mathbb{C}$-elliptic by Lemma~\ref{lem:Poincare},~\ref{item:bdd1} follows and the proof is complete. 
\end{proof}

\section{Sharp KMS-inequalities: The case $(p,n)\neq(1,2)$}\label{sec:sharpAllD}
For the conclusions of Theorem \ref{thm:main1} we only have to establish the sufficiency parts; note that the necessity of ellipticity follows as in Section~\ref{sec:necpart}. 

As in \cite{GmSp}, the proof of the sufficiency part is based on a suitable Helmholtz decomposition. In view of the explicit expressions of these parts (cf.~\eqref{eq:HelmholtzN} in the appendix), we have in all dimensions $n\ge2$ for all $a\in\hold_{c}^{\infty}(\R^{n};\R^{n})$ and all $x\in\R^{n}$:
\begin{align}\label{eq:normAdiv}
\begin{split}
 |a_{\curl}(x)| & \le \frac{1}{n\,\omega_n}\int_{\R^n} |x-y|^{1-n}\cdot |\div a(y)|\,\mathrm{d}y= \frac{1}{c_{1,n}\,n\,\omega_n}\,\mathcal{I}_1 (|\div a|)(x), \\
 |a_{\div}(x)| & \le \frac{\sqrt{n-1}}{n\,\omega_n}\int_{\R^n} |x-y|^{1-n}\cdot |\curl a(y)|\,\mathrm{d}y = \frac{\sqrt{n-1}}{c_{1,n}\,n\,\omega_n}\, \mathcal{I}_1 (|\curl a|)(x), 
\end{split}
\end{align}
where we have used $|\nMat{b}{n}|=\sqrt{n-1}\,|b|$ for any $b\in\R^{n}$ as a  direct consequence of~\eqref{eq:frobenius}. Now, we decompose $P\in\hold_{c}^{\infty}(\R^{n};\R^{m\times n})$ also in its divergence-free part $P_{\Div}$ and curl-free part $P_{\Curl}$; for the following, it is important to note that these matrix differential operators act row-wise. Thus we may write $P_{\Curl} = \D u$ for some $u:\R^n\to\R^m$. Then by Lemma \ref{lem:CZK} we have
\begin{align}\label{eq:HelmholtzPart1}
\begin{split}
 \norm{P_{\Curl}}_{\lebe^{p^*}(\R^n)} &=  \norm{\D u}_{\lebe^{p^*}(\R^n)} \le c \norm{\mathbb{A}u}_{\lebe^{p^*}(\R^n)}=c \norm{\mathscr{A}[\D u]}_{\lebe^{p^*}(\R^n)} \\ & = c \norm{\mathscr{A}[P_{\Curl}]}_{\lebe^{p^*}(\R^n)} = c \norm{\mathscr{A}[P-P_{\Div}]}_{\lebe^{p^*}(\R^n)} \\ & \le c \norm{\mathscr{A}[P]}_{\lebe^{p^*}(\R^n)} +  c \norm{P_{\Div}}_{\lebe^{p^*}(\R^n)}.
\end{split}
\end{align}
The remaining proofs then follow after establishing
\begin{equation}\label{eq:desired}
 \norm{P_{\Div}}_{\lebe^{p^*}(\R^n)}\le c\,\norm{\Curl P}_{\lebe^{p}(\R^n)}.
\end{equation}
To this end, we consider the rows of our incompatible field $P=(P^1,\ldots,P^m)^{\top}$. Using Lemma~\ref{lem:FIT} with $s=1$ yields
\begin{align}\label{eq:Helmholtz2}
\begin{split}
 \norm{P_{\Div}}_{\lebe^{p^*}(\R^n)}&\le\sum_{j=1}^m \norm{P^j_{\div}}_{\lebe^{p^*}(\R^n)}\overset{\eqref{eq:normAdiv}}{\le} c\,\sum_{j=1}^m\norm{\mathcal{I}_{1}(|\curl P^j|)}_{\lebe^{p^*}(\R^n)} \\
 &\overset{\mathclap{\text{Lem.~\ref{lem:FIT}}}}{\le}\quad c \sum_{j=1}^m \norm{\curl P^j}_{\lebe^{p}(\R^n)} \le c\norm{\Curl P}_{\lebe^{p}(\R^n)}. 
\end{split}
\end{align}
Combining~\eqref{eq:HelmholtzPart1} and~\eqref{eq:desired} then yields Theorem~\ref{thm:main1} for $1<p<n$. If $n\geq 3$ and $p=1$, estimate~\eqref{eq:desired} is now a consequence of Lemma~\ref{lem:BoBre}~\ref{item:BoBre2}. Indeed, using~\eqref{eq:BourgainBrezis1}, we have
\begin{align}
 \norm{P_{\Div}}_{\lebe^{1^*}(\R^n)}&\le\sum_{j=1}^m \norm{P^j_{\div}}_{\lebe^{1^*}(\R^n)}\overset{\text{Lem. \ref{lem:BoBre}}}{\le} c \sum_{j=1}^m \norm{\curl P^j_{\div}}_{\lebe^{1}(\R^n)}\notag\\ & =c \sum_{j=1}^m \norm{\curl P^j}_{\lebe^{1}(\R^n)} \le c\norm{\Curl P}_{\lebe^{1}(\R^n)}
\end{align}
where in the penultimate step we added $P^j_{\curl}$ and used the fact that $\curl P^j_{\curl}=0$.  Summarising, the proof of the Theorem \ref{thm:main1} is now complete.

\begin{example}[Incompatible Maxwell/div-curl-inequalities]\label{exampleNeffKnees} In the context of time-incremental infinitesimal elastoplasticity \cite{NeffKnees}, the authors investigated the operator $\mathscr{A}\colon \R^{n\times n}\to\R^{n\times n}$ given by:
\begin{equation}
 \mathscr{A}[P]=\mu_c\skew P + \frac{\kappa}{n}\tr P \cdot\text{\usefont{U}{bbold}{m}{n}1}_{n}, \quad \text{with $\mu_c,\kappa\neq0$}.
\end{equation}
In $n=1$ dimensions this operator clearly induces an elliptic differential operator. We show, that the induced operator $\A$ is also elliptic in $n\ge2$ dimensions. To this end consider for $\xi\in\R^n\backslash\{0\}$:
\begin{equation}
 \mathscr{A}[v\otimes\xi]=\A[\xi]v=\mu_c\skew(v\otimes\xi)+\frac{\kappa}{n}\tr(v\otimes\xi)\cdot\text{\usefont{U}{bbold}{m}{n}1}_{n} \overset{!}{=}0.
\end{equation}
Hence,
\begin{equation}
 \skew(v\otimes \xi)=0 \quad \text{and}\quad \tr(v\otimes\xi)=\langle v,\xi\rangle=0. \label{eq:conditions}
\end{equation}
Since the generalized cross product and the skew-symmetric part of the dyadic product consist of the same entries, cf. \cite[Eq.~(3.9)]{Lewintan1}, the first condition here \eqref{eq:conditions}$_1$ is equivalent to
 \begin{equation}
  v\ntimes{n}\xi=0.
 \end{equation}
Furthermore the generalized cross product satisfies the area property, cf. \cite[Eq.~(3.13)]{Lewintan1}, so that 
\begin{equation}
 0 = |v\ntimes{n}\xi|^2=|v|^2|\xi|^2-\langle v,\xi\rangle^2 \overset{\eqref{eq:conditions}_{2}}{=} |v|^2|\xi|^2 \quad \overset{\xi\neq0}{\Rightarrow} \quad v=0,
\end{equation}
meaning that $\mathscr{A}$ induces an elliptic operator $\A$. But since $\dim\mathscr{A}[\R^{2\times2}]=2$ we infer from Lemma \ref{lem:characterC} that $\A$ is not $\mathbb{C}$-elliptic in $n=2$ dimensions.
Hence, our Theorem \ref{thm:main1} and (anticipating the proof from Section~\ref{sec:subcritical} below) Theorem \ref{thm:main3} are applicable to this $\mathscr{A}$:
\begin{align}\label{eq:exampleNeffKnees}
 \norm{P}_{\lebe^{p^*}(\R^n)} &\le c\,\big(\norm{\skew P}_{\lebe^{p^*}(\R^n)}+\norm{\tr P}_{\lebe^{p^*}(\R^n)} + \norm{\Curl P}_{\lebe^p(\R^n)}\big),
 \end{align}
provided $( n\ge3, 1\le p<n )$ or $( n=2, 1< p<2 )$. Inserting $P=\D u$ and using that $\mathrm{skew}\D u$ and the generalised $\curl u$ consist of the same entries,~\eqref{eq:exampleNeffKnees} generalises the usual $\div$-$\curl$-inequality to incompatible fields. If $n=2$ and $p=1$, inequality~\eqref{eq:exampleNeffKnees} fails since $\A$ fails to be $\mathbb{C}$-elliptic. In fact, the induced elliptic operator $\A$ is not cancelling in any dimension $n\in\N$ since: \begin{equation}
 \bigcap_{\xi\in\R^n\backslash\{0\}} \A[\xi](\R^n)\supseteq\R\cdot\text{\usefont{U}{bbold}{m}{n}1}_{n}.
\end{equation}
\end{example}
We conclude this section by addressing inequalities that (i) involve other function space scales and (ii) also deal with the case $p\geq n$ left out so far. As such, we provide three exemplary results on the validity of inequalities 
\begin{align}\label{eq:Xscale}
\norm{P}_{X(\R^{n})}\leq c\,\big(\norm{\mathscr{A}[P]}_{X(\R^{n})}+\norm{\Curl P}_{Y(\R^{n})} \big),\qquad P\in\hold_{c}^{\infty}(\R^{n};\R^{n\times n})
\end{align}
for $n\geq 2$ that can be approached analogously to~\eqref{eq:normAdiv}--\eqref{eq:Helmholtz2} by combining the Helmholtz decomposition with suitable versions of the fractional integration theorem. To state these results, we require some notions from the theory of weighted Lebesgue and Orlicz spaces, and to keep the paper self-contained, the reader is directed to  Appendix~\ref{sec:appendixweighted} for the underlying background terminology. For the following, let $\A u:=\mathscr{A}[\D u]$ be elliptic.

\textbullet\; \emph{Weighted Lebesgue spaces.} Let $n\geq 2$ and $w$ be a weight that belongs to the Muckenhoupt class $A_{1}$ on $\R^{n}$ (see e.g.~\cite{Muckenhoupt1}) and denote $\lebe^{p}(\R^{n};w)$ the corresponding weighted Lebesgue space. Given $1<p<n$, the Korn-type inequality $\norm{\D u}_{\lebe^{p^{*}}(\R^{n},w)}\leq c\,\norm{\A u}_{\lebe^{p^{*}}(\R^{n},w)}$ for $u\in\hold_{c}^{\infty}(\R^{n};\R^{n})$ (see e.g. \cite{Kalamajska1,Kalamajska} or \cite[Cor.~4.2]{ContiGmeineder}) and $\mathcal{I}_{1}\colon \lebe^{p}(\R^{n};w^{p/p^{*}})\to \lebe^{p^{*}}(\R^{n};w)$ boundedly (see~\cite[Thm.~4]{Muckenhoupt2}) combine to~\eqref{eq:Xscale} with $X(\R^{n})=\lebe^{p^{*}}(\R^{n},w)$ and $Y(\R^{n})=\lebe^{p}(\R^{n};w^{p/p^{*}})$. 

\textbullet\; \emph{Orlicz spaces.} Let $A,B\colon[0,\infty)\to[0,\infty)$ be two Young functions such that $B$ is of class $\Delta_{2}$ and $\nabla_{2}$. Adopting the notation from~\eqref{eq:CianchiDef1} and~\eqref{eq:CianchiDef2}, let us further suppose that $A$ dominates $B_{(n)}$ globally and that $A^{(n)}$ dominates $B$ globally, together with 
\begin{align}\label{eq:CianchiMain}
\int_{0}\frac{B(t)}{t^{1+\frac{n}{n-1}}}\dif t<\infty\;\;\;\text{and}\;\;\;\int_{0}\frac{\widetilde{A}(t)}{t^{1+\frac{n}{n-1}}}\dif t<\infty.
\end{align}
Under these assumptions, \textsc{Cianchi} \cite[Thm.~2(ii)]{CianchiFIT} showed that $\mathcal{I}_{1}\colon\lebe^{A}(\R^{n})\to\lebe^{B}(\R^{n})$ is a bounded linear operator. In conclusion, if $A,B$ are as above, we may then invoke the Korn-type inequality~\cite[Prop.~4.1]{ContiGmeineder} (or~\cite{BreitDiening} in the particular case of the symmetric gradient) to deduce~\eqref{eq:Xscale} with $X=\lebe^{B}(\R^{n})$ and $Y=\lebe^{A}(\R^{n})$. 

\textbullet\; \emph{$\bmo$ and homogeneous H\"{o}lder spaces for $p\geq n$.} If $p= n$ and $Y(\R^{n})=\lebe^{n}(\R^{n})$, scaling suggests $X=\lebe^{\infty}(\R^{n})$, but for this choice inequality~\eqref{eq:Korn3} is readily seen to be false for general elliptic operators $\A$ by virtue of \textsc{Ornstein}'s Non-Inequality on $\lebe^{\infty}$ \cite{Ornstein}. Let $\norm{\cdot}_{\bmo(\R^{N})}$ denote the $\bmo$-seminorm. We may use the fact that the Korn-type inequality from Lemma~\ref{lem:CZK} also holds in the form $\norm{\D u}_{\bmo(\R^{n})}\leq c\,\norm{\A u}_{\bmo(\R^{n})}$; this follows by realising that the map $\D u\mapsto \A u$ is a multiple of the identity plus a Calder\'{o}n-Zygmund operator of convolution type. As observed by \textsc{Spector} and the first author in the three-dimensional case \cite[Sec.~2.2]{GmSp} and since $\Curl$ as defined in Section~\ref{sec:notation} acts row-wisely, we may combine this with $\mathcal{I}_{1}\colon\lebe^{n}(\R^{n})\to\bmo(\R^{n})$ boundedly. This gives us~\eqref{eq:Xscale} with $X(\R^{n})=\bmo(\R^{n})$ and $Y(\R^{n})=\lebe^{n}(\R^{n})$. If $n<p<\infty$, one may argue similarly to obtain~\eqref{eq:Xscale} with $\norm{\cdot}_{X(\R^{n})}$ being the homogeneous $\alpha=1-\frac{n}{p}$-H\"{o}lder seminorm. Combining this with $\mathcal{I}_{1}\colon\lebe^{p}(\R^{n})\to{\dot{\hold}}{^{0,1-n/p}}(\R^{n})$ boundedly yields~\eqref{eq:Xscale} with $X(\R^{n})={\dot{\hold}}{^{0,1-n/p}}(\R^{n})$ and $Y(\R^{n})=\lebe^{p}(\R^{n})$. 
\begin{remark}[$n=1$] 
We now briefly comment on the case $n=1$ that has been left out so far. In this case, the only part maps $\mathscr{A}\colon\R\to\R$ that induce elliptic operators are (non-zero) multiples of the identity. Since every $\varphi\in\hold_{c}^{\infty}(\R)$ is a gradient, the corresponding KMS-inequalities read $\norm{P}_{\lebe^{\infty}(\R)}\leq \norm{\mathscr{A}[P]}_{\lebe^{\infty}(\R)}$ for $P\in\hold_{c}^{\infty}(\R)$ and hold subject to the ellipticity assumption on $\mathscr{A}$. 
\end{remark}
These results only display a selection, and other scales such as smoothness spaces e.g. \`{a} la Triebel-Lizorkin \cite{Triebel} equally seem natural but are beyond the scope of this paper. However, returning to the above examples we single out the following  
\begin{OQ}[On weighted Lebesgue and Orlicz spaces]
\noindent
(a) If one aims to generalise the above result for weighted Lebesgue spaces to the case $p=1$ in $n\geq 3$ dimensions, a suitable weighted version of Lemma~\ref{lem:BoBre}~\ref{item:BoBre2} is required. To the best of our knowledge, the only available weighted Bourgain-Brezis estimates are due to \textsc{Loulit}~\cite{Loulit} but work subject to different assumptions on the weights than belonging to $A_{1}$. In this sense, it would be interesting to know whether~\eqref{eq:Xscale} extends to $X(\R^{n})=\lebe^{\frac{n}{n-1}}(\R^{n},w)$ and $Y=\lebe^{1}(\R^{n};w^{\frac{n-1}{n}})$ for $w\in A_{1}$. 

(b) The reader will notice that a slightly weaker bound can be obtained in the above Orlicz scenario provided one keeps the $\Delta_{2}$- and $\nabla_{2}$-assumptions on $B$ but weakens the assumptions on $A$. By~\textsc{Cianchi}~\cite[Thm.~2(i)]{CianchiFIT} the Riesz potential $\mathcal{I}_{1}$ maps $\lebe^{A}(\R^{n})\to\lebe_{\mathrm{w}}^{B}(\R^{n})$ (with the weak Orlicz space $\lebe_{\mathrm{w}}^{B}(\R^{n})$) precisely if~$\eqref{eq:CianchiMain}_{2}$ holds and $A^{(n)}$ dominates $B$ globally. For $A(t)=t$ and hereafter 
\begin{align*}
\widetilde{A}(t)=\begin{cases} 0&\;\text{if}\;0\leq t\leq 1,\\ 
\infty&\;\text{otherwise},
\end{cases}
\end{align*}
together with the $\Delta_{2}\cap\nabla_{2}$-function $B(t)=t^{\frac{n}{n-1}}$, this gives us the well-known mapping property $\mathcal{I}_{1}\colon \lebe^{1}(\R^{n})\to\lebe_{\mathrm{w}}^{\frac{n}{n-1}}(\R^{n})$ boundedly. Similarly as this boundedness property can be improved on divergence-free maps to yield estimate~\eqref{eq:BourgainBrezis1}, it would be interesting to know whether estimates of the form $\norm{u}_{\lebe^{B}(\R^{n})}\leq c \norm{\curl u}_{\lebe^{A}(\R^{n})}$ hold for $u\in\hold_{c,\div}^{\infty}(\R^{n};\R^{n})$ for if $n\geq 3$ provided~$\eqref{eq:CianchiMain}_{2}$ is in action (which is the case e.g. if $A$ has almost linear growth) and $A^{(n)}$ dominates the $\Delta_{2}\cap\nabla_{2}$-function $B$. This would imply~\eqref{eq:Xscale} under slightly weaker conditions on $A,B$ than displayed above. 
\end{OQ}

\section{Subcritical KMS-inequalities}\label{sec:subcritical}
We conclude the paper by giving the quick proof of Theorem~\ref{thm:main3}. As to Theorem~\ref{thm:main3}~\ref{item:qlessp1}, let $q=1$, $\varphi\in\hold_{c}^{\infty}(\R^{n};\R^{m})$ be arbitrary and choose $r>0$ so large such that $\spt(\varphi)\subset\ball_{r}(0)$. Since $c>0$ as in~\eqref{eq:KMS3} is independent of $r>0$, we may apply this inequality to $P=\D\varphi$. Sending $r\to\infty$, we obtain $\norm{\D\varphi}_{\lebe^{1}(\R^{n})}\leq c\norm{\A\varphi}_{\lebe^{1}(\R^{n})}$ for all $\varphi\in\hold_{c}^{\infty}(\R^{n};\R^{m})$. By the sharp version of \textsc{Ornstein}'s Non-Inequality as given by \textsc{Kirchheim \& Kristensen} \cite[Thm.~1.3]{KirchheimKristensen} (also see~\cite{ContiFaracoMaggi,FaracoGuerra}), the existence of a linear map $\mathrm{L}\in\mathscr{L}(\R^{N\times m};\R^{N\times m})$ with $\mathbb{E}_{i}=\mathrm{L}\circ\,\mathbb{A}_{i}$ for all $1\leq i\leq n$ follows at once. If $1<q<n$, we may similarly reduce to the situation on the entire $\R^{n}$ and use the argument given at the beginning of Section~\ref{sec:necpart} to find that ellipticity of $\A u\coloneqq \mathscr{A}[\D u]$ is necessary for~\eqref{eq:KMS3} to hold. 

For the sufficiency parts~of Theorem~\ref{thm:main3}~\ref{item:qlessp1} and~\ref{item:qlessp2}, we note that by scaling it suffices to consider the case $r=1$. Given $P\in\hold_{c}^{\infty}(\ball_{1}(0);\R^{m\times n})$, we argue as in Section~\ref{sec:sharpAllD} and Helmholtz decompose $P=P_{\Curl}+P_{\Div}$. Writing $P_{\Curl}=\D u$, we have $\norm{P_{\Curl}}_{\lebe^{q}(\ball_{1}(0))}\leq c\,(\norm{\mathscr{A}[P]}_{\lebe^{q}(\ball_{1}(0))}+\norm{P_{\Div}}_{\lebe^{q}(\ball_{1}(0))})$; for $q=1$ this is a trivial consequence of $\mathbb{E}_{i}=\mathrm{L}\circ\,\mathbb{A}_{i}$ for all $1\leq i\leq n$, whereas for $q>1$ this follows as in Section~\ref{sec:sharpAllD}, cf.~\eqref{eq:HelmholtzPart1}. Finally, the part $P_{\Div}$ is treated by use of the subcritical fractional integration theorem, cf.~Lemma~\ref{lem:FIT}~\ref{item:FItsubc}. This completes the proof of Theorem~\ref{thm:main3}. 

\begin{remark}[Variants and generalisations of Theorem~\ref{thm:main3}]
If $n=2$, Korn's inequality in the form of Theorem~\ref{thm:domains} (under the same assumptions on $\Omega\subset\R^{2}$)
\begin{align}\label{eq:kornerq}
\min_{\Pi\in\mathcal{K}}\norm{P-\Pi}_{\lebe^{q}(\Omega)}\leq c\,\Big(\norm{\mathscr{A}[P]}_{\lebe^{q}(\Omega)}+\norm{\Curl P}_{\lebe^{p}(\Omega)}\Big)
\end{align}
persists for all $1< p<2$ and all $1< q\leq p^{*}=\frac{2p}{2-p}$. One starts from~\eqref{eq:Necas4} and estimates 
\begin{align*}
\norm{\Curl P}_{\sobo^{-1,q}(\Omega)} & \leq c \sup_{\substack{\varphi\in\sobo_{0}^{1,q'}(\Omega;\R^{2})\\ \norm{\D\varphi}_{\lebe^{q'}(\Omega)}\leq 1}}\int_{\Omega}\langle\Curl P,\varphi\rangle\dif x \leq c\norm{\Curl P}_{\lebe^{p}(\Omega)} 
\end{align*}
by H\"{o}lder's inequality and $\norm{\varphi}_{\lebe^{p'}(\Omega)} \leq c\norm{\D\varphi}_{\lebe^{q'}(\Omega)}$ for $\varphi\in\sobo_{0}^{1,q'}(\Omega;\R^{2})$. If $q<2$ and thus $q'>2$, this follows by Morrey's embedding theorem and by the estimate $\norm{\varphi}_{\lebe^{s}(\Omega)}\leq c\norm{\D\varphi}_{\lebe^{2}(\Omega)}$ for all $1<s<\infty$ provided $q=2$. If $2<q\leq \frac{2p}{2-p}$, we require $p'\leq (q')^{*}$, and this is equivalent to $q\leq\frac{2p}{2-p}$. 
\end{remark}
We conclude with a link between the Orlicz scenario from Section~\ref{sec:sharpAllD} and Theorem~\ref{thm:main3}: 
\begin{remark}
As discussed by \textsc{Cianchi} et al. \cite{BCD,Cianchi} for the (trace-free) symmetric gradient, if the Young function $B$ is not of class $\Delta_{2}\cap\nabla_{2}$, then Korn-type inequalities persist with a certain loss of integrability on the left-hand side. For instance, by~\cite[Ex.~3.8]{Cianchi} one has for $\alpha\geq 0$ the inequality 
\begin{equation}
 \norm{\D u}_{\lebe\log^{\alpha}\lebe(\Omega)}\leq c\norm{\sym\D u}_{\lebe\log^{1+\alpha}\lebe(\Omega)} \quad \text{for all $u\in\hold_{c}^{\infty}(\Omega;\R^{n})$}
\end{equation}
 with the Zygmund classes $\mathrm{L}\log^{\alpha}\lebe$. By the method of proof, the underlying inequalities equally apply to general elliptic operators. One then obtains e.g. the refined subcritical KMS-type inequality 

\begin{align}\label{eq:ZygmundSave}
\norm{P}_{\lebe\log^{\alpha}\lebe(\ball_{1}(0))}\leq c\Big(\norm{\mathscr{A}[P]}_{\lebe\log^{1+\alpha}\lebe(\ball_{1}(0))}+\norm{\Curl P}_{\lebe^{p}(\ball_{1}(0))} \Big)
\end{align}
for all $P\in\hold_{c}^{\infty}(\ball_{1}(0);\R^{n\times n})$, where $\alpha\geq 0$, $p\geq 1$ and $c=c(p,\alpha,n,\mathscr{A})>0$ is a constant. The reader will notice that this inequality holds if and only if $\mathscr{A}$ induces an elliptic operator $\mathbb{A}$.  Note that for $\alpha=0$, the additional logarithm on the right-hand side of~\eqref{eq:ZygmundSave} is the key ingredient for~\eqref{eq:ZygmundSave} to hold for such $\mathscr{A}$, while \emph{without the additional logarithm} one is directly in the situation of Theorem~\ref{thm:main3}~\ref{item:qlessp1} and this not only forces $\mathbb{A}=\mathscr{A}[\D u]$ to be elliptic but to trivialise.
\end{remark}

{\footnotesize
\subsection*{Acknowledgment}
Franz Gmeineder gratefully acknowledges financial support by the Hector foundation and is moreover thankful for financial support by the University of Duisburg-Essen for a visit in April 2022, where parts of the project were concluded. Patrizio Neff is supported within the project 'A variational scale-dependent transition scheme - from Cauchy elasticity to the relaxed micromorphic continuum' (Project-ID 440935806). Moreover, Peter Lewintan and Patrizio Neff were supported in the Project-ID 415894848 by the Deutsche Forschungsgemeinschaft. The authors thank {the anonymous referee for his valuable comments and}  the staff of the University Library for their help in obtaining articles, especially older ones.
\vspace{-1.7ex}
\subsection*{Conflict of interest} The authors declare that they have no conflict of interest.
\vspace{-2.5ex}
\subsection*{Data availability statement} Data sharing not applicable to this article as no datasets were generated or analysed.}

  {\footnotesize
 \begin{alphasection}
 \section{Appendix}
In this appendix, we briefly revisit Helmholtz decompositions in arbitrary space dimensions, examine another class of part maps $\mathscr{A}$ (and hereafter differential operators $\A$) which is encountered in infinitesimal elasto-plasticity and gather some auxiliary results and terminology on weighted Lebesgue and Orlicz spaces as utilised in Setion~\ref{sec:sharpAllD}. 
\subsection{Explicit Helmholtz decomposition in $\R^{n}$}
Let us first recall the situation in the $3$-dimensional case. Let $a\in\R^3$ be a fixed vector. Since the cross product $a\times\cdot$ is linear in the second component it can be identified with a multiplication with the skew-symmetric matrix
\begin{equation}\label{eq:Anti}
 \Anti(a)=\begin{pmatrix} 0 & -a_3 & a_2 \\ a_3 & 0& -a_1\\ -a_2 & a_1 & 0  \end{pmatrix},
\end{equation}
so that
\begin{equation}
 a \times b = \Anti(a)\, b \qquad \text{for all $b\in\R^3$.}
\end{equation}
Hence, with $\Anti:\R^3\to \mathfrak{so}(3)\coloneqq\{A\in\R^{3\times 3}\mid A^\top=-A\}$ we have a canonical identification of vectors in $\R^3$ with skew-symmetric $(3\times3)$-matrices. It is this identification of the cross product with a suitable matrix multiplication that allows to generalise the usual cross product in $\R^3$ to a cross product of a vector $b\in\R^3$ and a matrix $P\in\R^{m\times 3}$, $m\in\N$, from the right
\begin{equation}\label{eq:crossProduct}
 P\times b \coloneqq P\,\Anti(b)
\end{equation}
which is seen as row-wise application of the usual cross product. In the Hamiltonian formalism the usual $\curl:\hold^\infty_c(\R^3;\R^3)\to \hold^\infty_c(\R^3;\R^3)$ expresses as
\begin{equation}
 \curl a = \nabla \times a = \Anti(\nabla)\, a \qquad \text{for all $a\in \hold^\infty_c(\R^3;\R^3)$.}
\end{equation}
Hence, it extends row-wise to a matrix-valued operator $\Curl$:
\begin{equation}\label{eq:matrixCurl}
 \Curl P \coloneqq P\times (-\nabla) = - P\,\Anti(\nabla) \qquad \text{for all $P\in \hold^\infty_c(\R^3;\R^{m\times 3})$}
\end{equation}
and the minus sign comes from the anti-commutativity of the cross product since we operate from the right hand side here. This gives rise to the definition of the generalised curl as displayed in Section \ref{sec:prelims}, and in particular in two dimensions it holds
\begin{equation}
 \curl \begin{pmatrix} a_1 \\ a_2  \end{pmatrix} = \partial_1 a_2 - \partial_2 a_1 = \div \begin{pmatrix} a_2 \\ -a_1  \end{pmatrix}\,.
\end{equation}
The usual Helmholtz decomposition in $n=3$ dimensions can then be deduced from the following decomposition of the vector Laplacian:
\begin{equation}\label{eq:decompoLaplace}
 \Delta a = \nabla \div a - \curl \curl a \qquad \text{for all $a\in \hold^\infty_c(\R^3;\R^3)$}
\end{equation}
and has the form
\begin{subequations}
\begin{align}
 a(x)&=-\frac{1}{4\pi} \nabla_x\int_{\R^3}\frac{\div a(y)}{|x-y|}\,\mathrm{d}y \ \ + \frac{1}{4\pi}\curl_x\int_{\R^3}\frac{\curl a(y)}{|x-y|}\,\mathrm{d}y \\
  & = \underset{=a_{\curl}(x)}{\underbrace{\frac{1}{4\pi}\int_{\R^3}\div a(y)\cdot\frac{x-y}{|x-y|^3}\,\mathrm{d}y}}\ \ \underset{=a_{\div}(x)}{\underbrace{- \frac{1}{4\pi}\int_{\R^3}\frac{x-y}{|x-y|^3}\times \curl a(y)\,\mathrm{d}y}}
\end{align}
\end{subequations}
with $\curl$-free part $a_{\curl}$ and divergence-free part $a_{\div}$.

Furthermore, as higher dimensional generalization of the decomposition of the vector Laplacian it holds (see \cite[Eq.~(4.28)]{Lewintan1})
\begin{equation}\label{eq:decompoLaplaceN}
 \Delta a = \nabla \div a + \nMat{\nabla}{n}^\top\curl a \qquad \text{for all $a\in \hold^\infty_c(\R^n;\R^n)$, $n\ge2$,}
\end{equation}
and the minus sign in the usual decomposition \eqref{eq:decompoLaplace} comes from the skew-symmetry of the associated matrices \eqref{eq:Anti}. Hence, following essentially the same steps to deduce the usual Helmholtz decomposition the relation \eqref{eq:decompoLaplaceN} allows to decompose any vector field $a\in \hold^\infty_c(\R^n;\R^n)$ into a divergence-free vector field $a_{\div}$ and a gradient field $a_{\curl}$, see \cite[Section 4.5]{Lewintan1} for more details, and we end up with an explicit coordinate-free expression of the Helmholtz decomposition in all dimensions with

\begin{align}\label{eq:HelmholtzN}
\begin{split}
 a_{\curl}(x) &= \nabla_x\int_{\R^n} \Phi(x-y)\cdot\div a(y)\,\mathrm{d}y =\frac{1}{n\,\omega_n} \int_{\R^n} (x-y)\cdot\left(\frac{\div a(y)}{|x-y|^{n}}\right)\,\mathrm{d}y, \\
 a_{\div}(x)&= \nMat{\nabla_x}{n}^\top\int_{\R^n} \Phi(x-y)\cdot\curl a(y)\,\mathrm{d}y  =\frac{1}{n\,\omega_n}\int_{\R^n} \nMat{x-y}{n}^\top\left(\frac{\curl a(y)}{|x-y|^{n}}\right)\,\mathrm{d}y,
 \end{split}
\end{align}
where $\Phi$ denotes the fundamental of the Laplacian in $n$ dimensions given by 
\begin{align*}
 \Phi(x)=\begin{cases}
     \frac{1}{2\pi}\log|x|, & \text{for } n=2, \\
     \frac{1}{n(2-n)\,\omega_n}|x|^{2-n}, &\text{for } n\ge3.
    \end{cases}
\end{align*}

\subsection{Connections to incompatible Maxwell-type inequalities and general terminology}\label{appendix:maxwell}
The inequalities studied in this paper are generalisations of well-known coercive estimates that have been employed e.g. in elasticity or general relativity. This also motivates the terminology of \emph{KMS-inequalities} used in the main part. As displayed in Figure~\ref{fig:neffpic}, the Cartan decomposition of a matrix field $P$ into its trace-free symmetric and its complementary part correspond to Korn-type inequalities and Maxwell-type (or $\div$-$\curl$-) inequalities for compatible fields. This is exemplarily displayed for the particular choice $\mathscr{A}[P]=\mathscr{A}_{\text{Korn}}[P]=\dev\sym P$ and contrasted by the results of the present paper in the incompatible case in Figure~\ref{fig:neffpic}. 

Both the Korn-type and Maxwell-type inequalities from Figure~\ref{fig:neffpic} in the compatible case can be reduced to the Calder\'{o}n-Zygmund-Korn Lemma~\ref{lem:CZK}; from a historic perspective, the Korn-type inequality from Figure~\ref{fig:neffpic} is due to~\textsc{Reshetnyak}~\cite{Reshetnyak} (also see~\textsc{Dain}~\cite{Dain}). In turn, the Maxwell- or $\div$-$\curl$-inequality is originally due to \textsc{Korn}~\cite[bottom of p. 707 for $(p,n)=(2,3)$]{Korn} and can be directly retrieved from Lemma~\ref{lem:CZK} by using the elliptic operator 
 $\A u = (\div u, \curl u)^\top$. Since the matrix divergence and matrix curl act \textit{row-wise} we further have for all $P\in \hold^{\infty}_c(\R^n;\R^{m\times n})$ with $n\ge2$, $m\in\N$ and $1<q<\infty$:
 \begin{equation}
  \norm{\D P}_{\lebe^q(\R^n)} \le c\,(\norm{\Div P}_{\lebe^q(\R^n)}+\norm{\Curl P}_{\lebe^q(\R^n)});
 \end{equation}
 as an especially important implication, one obtains by virtue of \textsc{Piola}'s identity $\Div\Cof \D u\equiv0$ that 
 \begin{equation}
\norm{\D \Cof \D u}_{\lebe^q(\R^n)} \le c\, \norm{\Curl \Cof \D u}_{\lebe^q(\R^n)}\qquad\text{for all}\;u\in\hold_{c}^{\infty}(\R^{n};\R^{n}).
\end{equation} 
Equivalently, the $\div$-$\curl$-inequality can also be directly deduced using classical elliptic regularity estimates and the following decomposition of the Laplacian ($n\ge2$):
\begin{equation}\label{eq:LaplacDecompo}
 \Delta u = \nabla \div u + L(\D\, \curl u),
\end{equation}
with a constant coefficient linear operator $L$, cf. \eqref{eq:decompoLaplaceN}; in our approach, the Maxwell-type inequality comes as a by-product.

\begin{figure}[t]
 \centering\includegraphics[width=\textwidth]{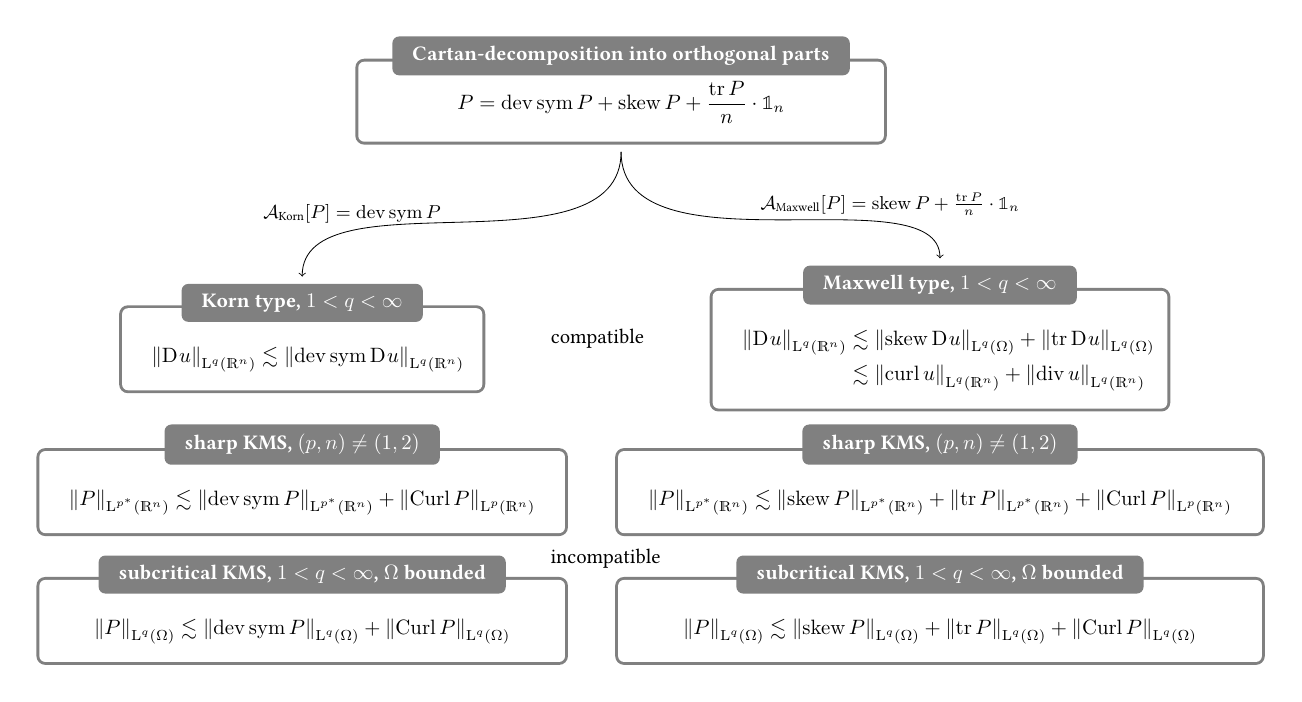}
\caption{\footnotesize The appearing vector valued functions $u$ and tensor valued functions $P$ are compactly supported.}\label{fig:neffpic}
\end{figure}

\subsection{Notions for weighted Lebesgue and Orlicz spaces}\label{sec:appendixweighted}
Here we gather some background results on weighted Lebesgue and Orlicz spaces as required for the generalisations of KMS-type inequalities in Section~\ref{sec:sharpAllD}. 

First, given a weight (i.e. a locally integrable non-negative function) $w\colon\R^{n}\to\R$ and $1<p<\infty$, we say that $w$ belongs to the \emph{Muckenhoupt class} $A_{p}$ provided 
\begin{align*}
[w]_{A_{p}}:=\sup_{\substack{Q\;\text{cube}}}\Big(\dashint_{Q}w\dif x \Big)\Big(\dashint_{Q}w^{1-p'}\dif x\Big)^{p-1} < \infty. 
\end{align*}
Moreover, denoting $\mathcal{M}$ the non-centered Hardy-Littlewood maximal operator, we say that $w$ belongs to the Muckenhoupt class $A_{1}$ provided there exists $c>0$ such that $\mathcal{M}w\leq cw$ holds $\mathscr{L}^{n}$-a.e. in $\R^{n}$. The smallest such constant $c>0$ then is denoted $[w]_{A_{1}}$. If $w\in A_{p}$ for some $1\leq p<\infty$, for $1\leq q<\infty$ the \emph{weighted Lebesgue space} $\lebe^{q}(\R^{n};w)$ then is the Lebesgue space with respect to the measure $\mu=w\mathscr{L}^{n}$. The Muckenhoupt condition $w\in A_{p}$ then yields boundedness of the usual Hardy-Littlewood maximal operators and singular integrals on $\lebe^{p}(\R^{n};w)$ provided $1<p<\infty$, cf.~\cite{Muckenhoupt1,Duoandi}, which is why Korn-type inequalities on $\lebe^{p}(\R^{n};w)$-spaces are available provided $w\in A_{p}$; to connect with the results from Section~\ref{sec:sharpAllD}, note that $A_{1}$ is the smallest Muckenhoupt class and is contained in any other $A_{p}$.  

Second, we say that a function $A\colon[0,\infty)\to[0,\infty]$ is a \emph{Young function} provided it can be written as 
\begin{align*}
A(s)=\int_{0}^{s}a(r)\dif r,\;\;\;r\geq 0, 
\end{align*}
for some increasing, left-continuous $a\colon[0,\infty)\to[0,\infty]$ such that $a$ does not equal zero or infinity on $(0,\infty)$. Each such Young function $A$ gives rise to the correspoding Orlicz space $\lebe^{A}(\R^{n})$ as the collection of all measurable $u\colon\R^{n}\to\R$ for which the Luxemburg norm 
\begin{align*}
\norm{u}_{\lebe^{A}(\R^{n})}:=\inf\Big\{\lambda>0\colon\;\int_{\R^{n}}A\Big(\frac{|u(x)|}{\lambda}\Big)\dif x \leq 1 \Big\}
\end{align*}
is finite. With a Young function $A$, we associate its \emph{Fenchel conjugate} via $\widetilde{A}(s):=\sup_{t\geq 0}(s\,t-A(t))$. We say that $A$ is of class $\Delta_{2}$ provided there exists $c>0$ such that $A(2s)\leq cA(s)$ holds for all $s\geq 0$, and of class $\nabla_{2}$ provided $\widetilde{A}$ is of class $\Delta_{2}$; exemplary instances of functions that satisfy both $\Delta_{2}$- and $\nabla_{2}$-conditions are given by the power functions $s\mapsto s^{p}$ for $1<p<\infty$. Similarly, given two Young functions $A,B$, we say that $A$ \emph{globally dominates} $B$ provided $B(s)\leq A(cs)$ holds for all $s\geq 0$ and a constant $c>0$. 

Following \textsc{Cianchi} \cite{CianchiFIT}, given a Young function $B$ and $1<p\leq\infty$, we denote $B_{(p)}$ the Young function with Fenchel conjugate
\begin{align}\label{eq:CianchiDef1}
\widetilde{B}_{(p)}(s)=\int_{0}^{s}r^{p'-1}(\Psi_{p}^{-1}(r^{p'}))^{p'}\dif r\;\;\;\text{with}\;\;\;\Psi_{p}(s):=\int_{0}^{s}\frac{B(t)}{t^{1+p'}}\dif t. 
\end{align}
Here, the inverse is understood in the usual generalised sense. Within the context that is of interest to us, we moreover define for a Young function $A$ and $1<p\leq\infty$
\begin{align}\label{eq:CianchiDef2}
A^{(p)}(s)=\int_{0}^{s}r^{p'-1}(\Phi_{p}^{-1}(r^{p'}))^{p'}\dif r\;\;\;\text{with}\;\;\;\Phi_{p}(s):=\int_{0}^{s}\frac{\widetilde{A}(t)}{t^{1+p'}}\dif t. 
\end{align}
These notions are instrumental in stating \textsc{Cianchi}'s sharp embedding result for Riesz potentials \cite[Thm.~2]{CianchiFIT} (also see our discussion on p.~\pageref{eq:CianchiMain}); also compare with \cite{BCD}.

\end{alphasection}
}
\end{document}